\newcommand{\rk}{{\rm rk}}
\newcommand{\codim}{{\rm codim}}
\newtheorem{lemma1}{}[section]
\newenvironment{lemma}{\begin{lemma1}{\bf Lemma.}}{\end{lemma1}}
\newenvironment{example}{\begin{lemma1}{\bf Example.}\rm}{\end{lemma1}}
\newenvironment{theorem}{\begin{lemma1}{\bf Theorem.}}{\end{lemma1}}
\newenvironment{proposition}{\begin{lemma1}{\bf Proposition.}}{\end{lemma1}}
\newenvironment{corollary}{\begin{lemma1}{\bf Corollary.}}{\end{lemma1}}
\newenvironment{remark}{\begin{lemma1}{\bf Remark.}\rm}{\end{lemma1}}
\newenvironment{definition}{\begin{lemma1}{\bf Definition.}}{\end{lemma1}}
\newenvironment{setup}{\begin{lemma1}{\bf Setup.}}{\end{lemma1}}
\newenvironment{problem}{\begin{lemma1}{\bf Problem.}}{\end{lemma1}}
\newenvironment{assumption}{\begin{lemma1}{\bf Assumption.}}{\end{lemma1}}
\newenvironment{fact}{\begin{lemma1}{\bf Fact.}}{\end{lemma1}}
\newenvironment{remark*}{{\bf Remark.}}{}
\newenvironment{remarks*}{{\bf Remarks.}}{}
\newenvironment{example*}{{\bf Example.}}{}
\newenvironment{assumption*}{{\bf Assumption.}}{}
\newcommand{\R}{\ensuremath{\mathbb{R}}}
\newcommand{\Q}{\ensuremath{\mathbb{Q}}}
\newcommand{\Z}{\ensuremath{\mathbb{Z}}}
\newcommand{\C}{\ensuremath{\mathbb{C}}}
\newcommand{\N}{\ensuremath{\mathbb{N}}}
\newcommand{\PP}{\ensuremath{\mathbb{P}}}
\newcommand{\holom}[3]{\ensuremath{#1:#2  \rightarrow #3}}
\newcommand{\fibre}[2]{\ensuremath{#1^{-1} (#2)}}
\newcommand\sI{{\mathcal I}}
\newcommand\sN{{\mathcal N}}
\newcommand\sO{{\mathcal O}}
\DeclareMathOperator*{\pic}{Pic}
\DeclareMathOperator*{\sing}{sing}
\newcommand{\NE}[1]{ \ensuremath{ \overline { \mbox{NE} }(#1)} }
\newcommand{\AX}{\ensuremath{|-K_X|}}
\newcommand{\BsAX}{\ensuremath{\mbox{\rm Bs}}(|-K_X|)}
\newcommand{\Bs}[1]{\ensuremath{\mbox{\rm Bs}}(#1)}
\author{Andreas H\"oring}
\author{Saverio Andrea Secci}
\address{Andreas H\"oring, Universit\'e C\^ote d'Azur, CNRS, LJAD, France, Institut Universitaire de France}
\email{andreas.hoering@univ-cotedazur.fr}
\address{Saverio A.\ Secci,  Universit\`a di Torino, Dipartimento di Matematica, via Carlo Alberto 10, 10123 Torino - Italy}
\curraddr{Universit\`a di Milano, Dipartimento di Matematica, via Cesare Saldini 50, 20133 Milano - Italy}
\email{saverio.secci@unimi.it}
\subjclass[2020]{14J45,14J35,14E30,14J17}
\keywords{Fano manifold, anticanonical divisor, base locus, MMP}
\title{Fano fourfolds with large anticanonical base locus} 
\date{\today} 
\begin{document}

\begin{abstract} 
A famous theorem of Shokurov states that a general anticanonical divisor of a smooth Fano threefold is a smooth K3 surface. This is quite surprising since there are several examples where the base locus of the anticanonical system has codimension two. In this paper we show that for four-dimensional Fano manifolds the behaviour is completely opposite: if the base locus is a normal surface, hence has codimension two, all the anticanonical divisors are singular. 
\end{abstract} 

\maketitle

\section{Introduction}

\subsection{Motivation and main result}

The anticanonical system is arguably the most natural object attached to a Fano manifold, 
and it plays an important role in the classification of Fano manifolds of low dimension. While it is expected that the anticanonical bundle always has global sections \cite{Cet90, Amb99,Kaw00}, it is in general not globally generated. In dimension three Shokurov's theorem gives a complete description of the situation:

\begin{theorem} \cite{Sho80} \label{theorem-shokurov}
Let $X$ be a smooth Fano threefold such that the base locus $\BsAX$ is not empty. 
Then the base locus is a smooth rational curve.
Moreover a general anticanonical divisor $Y \in \AX$ is smooth.
\end{theorem}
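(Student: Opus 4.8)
The plan is to restrict to a general member of $|-K_X|$ and to reduce everything to the theory of linear systems on K3 surfaces. The first step — and, I expect, the main obstacle — is to prove that $|-K_X|$ has no fixed component and that a general member $S$ is normal with only Du Val singularities; this is the technical heart of Shokurov's argument, and I would establish it the way Shokurov does: by a careful analysis of the fixed part of $|-K_X|$ and of the singularities the base locus forces on a general anticanonical surface, using the structure theory of K3 and weak del Pezzo surfaces together with Riemann--Roch and vanishing theorems on $X$. Granting it, a general $S$ is irreducible and reduced (Bertini) and smooth away from $\BsAX$, it is Cohen--Macaulay as a divisor in the smooth $X$, and $\omega_S\cong\O_S$ by adjunction, so $S$ is a K3 surface with at worst Du Val singularities, all of which lie on $\BsAX$. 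Since $-K_X$ is ample, Kodaira vanishing gives $H^1(X,\O_X)=0$, so $H^0(X,-K_X)\to H^0(S,-K_X|_S)$ is surjective; hence the restriction of $|-K_X|$ to $S$ is the complete linear system $|D|$, where $D:=-K_X|_S$, and since one defining section of $S$ belongs to the base ideal we get $\BsAX\subseteq S$ scheme-theoretically and $\BsAX=\Bs{|D|}$. As $\BsAX\subseteq S_1\cap S_2$ for two general members, $\dim\BsAX\le1$, and $\BsAX\ne\emptyset$ by hypothesis.

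Next I would pass to the minimal resolution $\mu:\widetilde{S}\to S$, which is a smooth K3 surface. Since $D$ is ample, $\mu^*D$ is nef and big with $(\mu^*D)^2=(-K_X)^3>0$; because $S$ has rational singularities one has $H^0(\widetilde{S},\mu^*D)=H^0(S,D)$ and hence $\Bs{|\mu^*D|}=\mu^{-1}(\Bs{|D|})$. I would then invoke the Saint-Donat--Mayer description of a nef and big complete linear system on a smooth K3 surface: when its base locus is nonempty it is a single smooth rational $(-2)$-curve $\Gamma$ occurring with multiplicity one, and one has $\mu^*D\sim aE+\Gamma$ for an elliptic pencil $|E|$ with $E\cdot\Gamma=1$ and some $a\ge2$, with $|aE|=|\mu^*D-\Gamma|$ base-point free. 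As $\Bs{|\mu^*D|}\ne\emptyset$, this applies. Now $\mu$ contracts precisely the $(-2)$-curves $E_i$ lying over $\mathrm{Sing}(S)$ and satisfies $\mu^*D\cdot E_i=0$; since $E$ is nef and $\Gamma$ is irreducible, the relation $a(E\cdot E_i)+\Gamma\cdot E_i=0$ forces $E\cdot E_i=\Gamma\cdot E_i=0$, so $\Gamma$ is disjoint from the exceptional locus of $\mu$ (the alternative $\Gamma=E_i$ would force $(-K_X)^3=2$, where $|-K_X|$ is base-point free, contradicting $\BsAX\ne\emptyset$). Hence $\mu$ maps $\Gamma$ isomorphically onto $\Bs{|D|}=\BsAX$, so $\BsAX$ is a reduced smooth rational curve disjoint from $\mathrm{Sing}(S)$; since also $\mathrm{Sing}(S)\subseteq\BsAX$, this gives $\mathrm{Sing}(S)=\emptyset$, and so a general $S$ is a smooth K3 surface. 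This proves both assertions.

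One can also obtain the smoothness of a general member more directly once $C:=\BsAX$ is known to be a reduced smooth curve. For $p\in C$ the image of the evaluation map $H^0(X,-K_X)\to\mathfrak{m}_{X,p}/\mathfrak{m}_{X,p}^2$ coincides with the image of the ideal $I_{C,p}$, which is $2$-dimensional because $C$ is smooth, so the members of $|-K_X|$ singular at $p$ form a linear subsystem of codimension $2$. Consequently the incidence variety $\{(p,Y):p\in C,\ Y\ \text{singular at}\ p\}$ has dimension at most $\dim C+(\dim|-K_X|-2)=\dim|-K_X|-1$, its image in $|-K_X|$ is a proper subvariety, and a general $Y$ is singular at no point of $C$; being smooth away from $C$ by Bertini, a general $Y\in|-K_X|$ is then smooth.
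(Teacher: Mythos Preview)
The paper does not give its own proof of this statement: Theorem~\ref{theorem-shokurov} is quoted from \cite{Sho80} as background, and the only discussion of its proof is the two-sentence sketch in \S\ref{setup} (``General strategy and structure of the proof''): first show that a general $Y\in|-K_X|$ has canonical singularities, then apply Mayer's theorem (Theorem~\ref{theorem-mayer}) to a minimal resolution $Y'\to Y$. Your proposal implements precisely this two-step outline, so there is nothing to contrast at the level of strategy.

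Your execution of the second step is correct and somewhat more detailed than what the paper indicates. In particular, the argument that the base curve $\Gamma$ on the minimal resolution $\widetilde S$ must be disjoint from every exceptional $(-2)$-curve $E_i$ (via $\mu^*D\cdot E_i=0$, nefness of $E$, and the elimination of the case $\Gamma=E_i$ by computing $(-K_X)^3=2$ and invoking base-point freeness there) is a clean way to force $\mathrm{Sing}(S)=\emptyset$, and your alternative incidence-variety count at the end is a valid independent route to smoothness once $\BsAX$ is known to be a reduced smooth curve. The only point where you do not supply an argument is the first step --- that $|-K_X|$ has no fixed component and that a general member has at worst Du Val singularities --- which you explicitly flag as the technical heart and defer to Shokurov's original analysis; this is consistent with how the paper treats it (as input, not as something reproved here).
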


In dimension four our current knowledge about the anticanonical system is very limited.
An example by the first named author and Voisin shows that Shokurov's theorem does not generalise to higher dimension:

\begin{example} \cite[Ex.2.12]{HV11} \label{example-HV}
Let $S$ be a smooth del Pezzo surface of degree one, and
denote by $p \in S$ the unique base point of $|-K_S|$.
Set $X:=S \times S$, and let $Y \in \AX$ be a general anticanonical divisor. Then $Y$ is singular in the point $(p,p)$.
\end{example}

Indeed the threefold $Y$ contains the base locus
$\BsAX = p \times S \cup S \times p$ which has embedding dimension four in the point $(p,p)$.
The varieties in Example \ref{example-HV} belong to the unique family of Fano fourfolds having the maximal Picard number 18 \cite[Thm.1.1]{Cas23}, so one might hope that 
the presence of singular general anticanonical divisors
is a rare pathology. 
In this paper we destroy this hope by showing that 
a sufficiently large base locus always leads to singularities:

\begin{theorem} \label{theorem-main}
Let $X$ be a smooth Fano fourfold such that $h^0(X, \sO_X(-K_X)) \geq 3$.
Assume that the base locus $\BsAX$ is an irreducible normal surface. Then a general anticanonical divisor $Y \in \AX$
is not $\Q$-factorial, in particular it is singular.
\end{theorem}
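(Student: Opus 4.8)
The plan is to exhibit, on the general $Y\in\AX$, a small but non-trivial birational resolution, and then to read off non-$\Q$-factoriality from the resulting jump of the class group over the Picard group. Write $S:=\BsAX$ and $L:=\sO_X(-K_X)$. First note that a general $Y\in\AX$ is normal: it is Cohen--Macaulay, being a Cartier divisor in the smooth $X$, and (as will follow from Step~1) it is smooth at the generic point of $S$, hence regular in codimension one; moreover $\omega_Y=(\omega_X\otimes\sO_X(Y))|_Y=\sO_Y$, so $Y$ is a Gorenstein Calabi--Yau threefold with $\dim\sing(Y)\le 1$.

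\emph{Step 1 (resolution and a global generation statement).} Let $\pi:\hat X\to X$ be a resolution of the base ideal of $\AX$, an isomorphism over $X\setminus S$, with $\pi^*L=\hat M+E$, where $|\hat M|$ is base-point-free and $E$ is effective, supported over $S$. Over the smooth locus $S^\circ\subseteq S$ this is just the blow-up of $S^\circ$, with $p:E^\circ=\PN(\sN^*_{S^\circ/X})\to S^\circ$ the projectivised conormal $\mathbb P^1$-bundle and $\sO_{E^\circ}(-E)=\sO_{E^\circ}(1)$. Since $-E$ is $\pi$-ample and $\pi^*L$ is $\pi$-trivial, $\hat M=\pi^*L-E$ is $\pi$-ample, so $\hat M|_{E^\circ}$ has degree one on the fibres of $p$; as $\hat M$ is globally generated, its restriction to each fibre $\mathbb P^1$ is globally generated of degree one, hence is the \emph{complete} system $|\sO_{\mathbb P^1}(1)|$. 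Pushing forward, the rank-two bundle $\sE:=\sN^*_{S^\circ/X}\otimes(L|_{S^\circ})=p_*(\hat M|_{E^\circ})$ is globally generated by the image of $H^0(X,L)=H^0(\hat X,\hat M)$.

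\emph{Step 2 ($g$ is small) and Step 3 (conclusion).} Let $\hat Y\in|\hat M|$ be general; it is smooth by Bertini, and $g:=\pi|_{\hat Y}:\hat Y\to Y:=\pi(\hat Y)$ is a projective birational morphism onto a general $Y\in\AX$, an isomorphism over $Y\setminus S$. Over $S^\circ$ the divisor $\hat Y\cap E^\circ\in|\hat M|_{E^\circ}|$ is cut out by a section $\tau$ of $\sE$; since $\sE$ is globally generated and $\tau$ is general, $Z:=\{\tau=0\}$ is finite (of length $c_2(\sE)$), so $\hat Y\cap E^\circ\to S^\circ$ is an isomorphism over $S^\circ\setminus Z$ while over each point of $Z$ it contracts a line; moreover, $\hat M$ being globally generated, $\hat Y$ contains no positive-dimensional $\pi$-fibre over $\sing(S)$. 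Hence the exceptional locus of $g$ is a finite union of curves: $g$ is small. I claim $g$ is not an isomorphism. If it were, then over $S^\circ$ the surface $\hat Y\cap E^\circ$ would be a section of $\PN(\sN^*_{S^\circ/X})\to S^\circ$, forcing $c_2(\sE)=0$; also $Y$ would be smooth, $S\subseteq Y$ Cartier, and adjunction on the Calabi--Yau $Y$ would give $K_S=S|_S=\sN_{S/Y}$, so that $0\to K_S\to\sN_{S/X}\to L|_S\to 0$ yields $S^2_X=c_2(\sN_{S/X})=K_S\cdot(L|_S)$; on the other hand $c_2(\sE)=0$ with $\sE$ globally generated gives $0\to\sO_S\to\sE\to\det\sE\to 0$ with $\det\sE$ globally generated of class $(L|_S)-K_S$, which, combined with the ampleness of $L|_S$ (via the Hodge index theorem on $S$, equivalently via the subbundle $(L|_S)^{-1}\hookrightarrow\Omega^1_X|_S$ and positivity of $T_X|_S$), contradicts $S^2_X=K_S\cdot(L|_S)$. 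So $g$ is small and contracts a curve $C$ with $0\ne[C]\in N_1(\hat Y)$, whence $\rho(\hat Y)>\rho(Y)$; since $g$ is small, $\Cl(Y)\cong\Cl(\hat Y)=\pic(\hat Y)$, so $\rk\Cl(Y)=\rho(\hat Y)>\rho(Y)=\rk\pic(Y)$, and therefore $\pic(Y)_\Q\subsetneq\Cl(Y)_\Q$: $Y$ is not $\Q$-factorial, in particular singular.

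The decisive and hardest point is the non-triviality in Step~3, i.e.\ turning $c_2(\sE)=0$ into a contradiction: this is exactly where the hypothesis $h^0(X,-K_X)\ge 3$ enters (for a pencil no contracted curve need appear, as Shokurov's theorem in dimension three already indicates), and where the normality and irreducibility of $S$ are used, both to control $\sN_{S/X}$ and to guarantee the global generation of $\sE$. The remaining technical burden is the behaviour of $\pi$ over the finitely many points of $\sing(S)$, which only affects $g$ on a finite set and so does not disturb smallness.
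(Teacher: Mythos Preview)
Your overall architecture—resolve the base locus, show the induced map $g:\hat Y\to Y$ is small and non-trivial, then read off non-$\Q$-factoriality—is a natural and appealing alternative to the paper's MMP-based route. However, the argument has a fatal gap at exactly the point you yourself flag as ``decisive and hardest'': Step~3 does not produce a contradiction.

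Concretely, assume $Y$ is smooth. You derive (i) $c_2(\sE)=0$ from the fact that $\hat Y\cap E^\circ$ is a section, and separately (ii) $c_2(\sN_{S/X})=K_S\cdot(L|_S)$ from the sequence $0\to K_S\to \sN_{S/X}\to L|_S\to 0$. But (ii) \emph{implies} (i): a direct Chern-class computation gives
\[
c_2(\sE)=c_2(\sN_{S/X})-K_S\cdot(L|_S),
\]
so the two statements are equivalent, not in tension. The exact sequence $0\to\sO_S\to\sE\to\det\sE\to 0$ you extract from $c_2(\sE)=0$ is nothing but the dual of the normal bundle sequence tensored with $L|_S$. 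Your appeal to ``positivity of $T_X|_S$'' is not justified: for a general smooth Fano fourfold $T_X$ carries no positivity beyond $\det T_X$ ample, and the Hodge index inequality you allude to yields only $(L|_S)^2\ge K_S\cdot(L|_S)$, which does not contradict anything you have derived. In short, there is no contradiction in Step~3 as written; establishing that $Y$ cannot be smooth is precisely the content of the theorem, and the paper devotes Sections~4--7 (extension theorems, positivity of $-K_X+M_X$, a delicate analysis of the surface $B$ and the threefold $B_X$, and classification of auxiliary linear systems) to this single point.

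There is also a secondary gap in Step~2. You assert that the zero locus of a general $\tau$ is finite because $\sE$ is globally generated, but $\tau$ is only general in the subsystem $V=\im\bigl(H^0(X,L)\to H^0(S^\circ,\sE)\bigr)$, not in $H^0(S^\circ,\sE)$. If $\dim V=3$ and the induced map $S^\circ\to\PP(V)$ (sending $s$ to the kernel of $V\to\sE_s$) is not dominant, the zero locus of a general $\tau\in V$ is a curve, not finite; then $\hat Y$ contains a vertical ruled surface over that curve and $g$ is divisorial rather than small. You would need to rule this out, and nothing in your argument does so.
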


This theorem covers all the examples we know of smooth Fano fourfolds such that $\BsAX$ is an irreducible surface, cf.\ Examples \ref{example1}, \ref{example2}, \ref{example2bis}. These examples differ from Example \ref{example-HV} in a significant way: they have moving singularities, i.e.\ the singular locus $Y_{\sing} \subset \BsAX$
depends on the choice of $Y \in \AX$.
Theorem \ref{theorem-main} is almost optimal: if the base scheme $\mbox{\bf Bs}(|-K_X|)$
is smooth of dimension at most one, a general anticanonical divisor $Y$ is smooth by a strong version of Bertini's theorem
\cite[Cor.2.4]{DH91}. It seems likely that Theorem \ref{theorem-main} still holds under the relaxed assumption that $\BsAX$ {\em contains} a surface. However this additional generality leads to numerous case distinctions in our proofs, a technicality that we wanted to avoid.

While our main result states that the anticanonical geometry of a Fano fourfold
is more complicated than in dimension three, the tools developed in this paper indicate that
two-dimensional base loci lead to numerous
restrictions on the geometry and the numerical invariants.
Therefore Theorem \ref{theorem-main}  is actually a first step towards the classification of Fano fourfolds with large anticanonical base locus, we plan to come back to this topic in the near future.

\subsection{The setup} \label{setup} 

Let $X$ be a smooth Fano fourfold such that $h^0(X, \sO_X(-K_X)) \geq 3$
and $\BsAX$ is an irreducible normal surface $B$.

Let $Y \in \AX$ be a general anticanonical divisor, so $Y$ is a normal threefold (Corollary \ref{theorem-known}). By Kodaira vanishing the restriction morphism
$$
H^0(X, \sO_X(-K_X)) \rightarrow H^0(Y, \sO_Y(-K_X))
$$
is surjective.  Thus we know that
$$
\Bs{|-K_X|_Y|} = \BsAX = B
$$
and $h^0(Y, \sO_Y(-K_X))=h^0(X, \sO_X(-K_X))-1 \geq 2$.
Denote by $|M|$ the mobile part of the linear system $|-K_X|_Y|$, then we have a linear equivalence of effective Weil divisors 
$$
-K_X|_Y \simeq M + B.
$$
The proof of Theorem \ref{theorem-main} will be by contradiction, so we make the

\begin{assumption} \label{main-assumption}
We assume that $Y$ is $\Q$-factorial.
\end{assumption}

Combined with the known results about anticanonical divisors, cf.\ Corollary \ref{corollary-factorial} this assumption implies that the Weil divisors $M$ and $B$ are Cartier.
By Grothendieck's Lefschetz hyperplane theorem \cite[Example 3.1.25]{Laz04a} the restriction morphism 
$$
\pic(X) \rightarrow \pic(Y)
$$
is an isomorphism, so there exists uniquely defined Cartier divisor classes $M_X \rightarrow X$
and $B_X \rightarrow X$ such that
$$
M \simeq M_X|_Y, \qquad B \simeq B_X|_Y.
$$
Note that it is not clear whether the divisors $M_X$ and $B_X$ are effective, establishing this will be an important first step in our proof.

\subsection{General strategy and structure of the proof}
The proof of Shokurov's theorem \ref{theorem-shokurov} can be split into two steps, cf.\ \cite[\S 2.3]{IP99}: first one shows that $Y$ has canonical singularities. Then one applies Mayer's theorem to a minimal resolution $Y' \rightarrow Y$.

\begin{theorem} \label{theorem-mayer} \cite[Prop.5]{May72}
Let $S$ be a smooth K3 surface, and let $A$ be a nef and big Cartier divisor on $S$ such that $\Bs{|A|} \neq \emptyset$. Then we have a decomposition into fixed and mobile part
$$
A \simeq M + B
$$
where $B \simeq \PP^1$ and $M$ is a nef divisor that defines an elliptic fibration $\varphi_{|M|}: S \rightarrow \PP^1$.
\end{theorem}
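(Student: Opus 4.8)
The plan is to combine the Riemann--Roch theorem on the K3 surface $S$ with the classical description of base loci of complete linear systems on K3 surfaces due to Mayer and Saint-Donat.

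\textbf{Step 1 (dimension count).} Since $K_S \simeq 0$ and $\chi(\sO_S) = 2$, Riemann--Roch reads $\chi(\sO_S(A)) = \tfrac{1}{2} A^2 + 2$; as $A$ is nef and big, $A^2$ is a positive even integer, hence $A^2 \geq 2$, and Kawamata--Viehweg vanishing applied to $K_S + A \simeq A$ gives $h^i(S, \sO_S(A)) = 0$ for $i > 0$. Thus $h^0(S, \sO_S(A)) = \tfrac{1}{2} A^2 + 2 \geq 3$ and $\dim|A| \geq 2$. Write $A \simeq M + B$ with $B$ the fixed part and $|M|$ the mobile part; then $\dim|M| \geq 2$, the base scheme of $|M|$ is finite (it has no fixed component) so $M^2 \geq 0$, and $\Bs{|A|} = B \cup \Bs{|M|}$ as sets.

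\textbf{Step 2 (the elliptic curve).} The core point is that $\Bs{|A|} \neq \emptyset$ forces the existence of an \emph{irreducible} curve $E$ with $E^2 = 0$ and $A \cdot E = 1$; such an $E$ has arithmetic genus one, $|E|$ is a base-point-free pencil, and $\varphi_{|E|}\colon S \to \PP^1$ is an elliptic fibration. For $A^2 \geq 6$ this is immediate from Reider's theorem: since $K_S \simeq 0$, a base point of $|A|$ produces an effective divisor $E_0$ with either $A \cdot E_0 = 0$, $E_0^2 = -1$ --- excluded because the intersection form on a K3 surface is even --- or $A \cdot E_0 = 1$, $E_0^2 = 0$; decomposing $E_0$ into irreducible components and using $A \cdot E_0 = 1$ together with the Hodge index theorem, one extracts the required irreducible $E$. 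The remaining cases $A^2 \in \{2,4\}$ are checked by hand (for instance, when $A^2 = 2$ the map $\varphi_{|A|}\colon S \dashrightarrow \PP^2$ and the presence of a base point already pin down the decomposition below). Alternatively one simply invokes the structure theory of linear systems on K3 surfaces, cf.\ \cite[\S 2.3]{IP99} and \cite{May72}.

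\textbf{Step 3 (the decomposition).} Granting $E$, set $a := \tfrac{1}{2} A^2 + 1 \geq 2$ and $\Gamma := A - aE$, so $\Gamma \cdot E = A \cdot E = 1$ and $\Gamma^2 = A^2 - 2a(A \cdot E) = A^2 - 2a = -2$. Since $|aE| = \varphi_{|E|}^{*}|\sO_{\PP^1}(a)|$ is base-point free with no fixed component and $h^0(\sO_S(aE)) = a + 1 = h^0(\sO_S(A))$, one deduces (using nefness of $A$ to rule out extraneous fixed components) that $\Gamma$ is effective, irreducible, and equals the fixed part $B$, while $M \simeq aE$ is the mobile part. Therefore $B = \Gamma$ is an irreducible $(-2)$-curve, hence $B \simeq \PP^1$; $M = aE$ is nef with $M^2 = 0$; and $\varphi_{|M|} = \varphi_{|aE|}$ is the composition of the elliptic fibration $\varphi_{|E|}\colon S \to \PP^1$ with the degree-$a$ Veronese embedding $\PP^1 \hookrightarrow \PP^a$, so that $|M|$ defines precisely this elliptic fibration. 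The difficulty is entirely concentrated in Step 2 --- ruling out a base point of $|A|$ when the mobile part has $M^2 > 0$, equivalently producing the curve $E$; once Reider's theorem is used for $A^2 \geq 6$, only the two low-degree cases and routine bookkeeping remain, and the rest of the argument is formal.
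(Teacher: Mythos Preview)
The paper does not prove this statement: Theorem~\ref{theorem-mayer} is quoted from \cite[Prop.~5]{May72} as background in the introduction, and no argument for it is given anywhere in the paper. So there is nothing in the paper to compare your attempt against.

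That said, a few remarks on your sketch. The overall architecture --- produce an elliptic pencil $E$ with $A\cdot E=1$ via Reider, then identify $M=aE$ and $B=A-aE$ by a dimension count --- is the right one for $A^2\ge 6$, but Step~3 is too quick. Effectiveness of $\Gamma$ does follow from Riemann--Roch once you observe that $-\Gamma$ cannot be effective (indeed $(-\Gamma)\cdot E=-1$ would force $E$ to occur in $-\Gamma$ with unbounded multiplicity). Irreducibility of $\Gamma$, however, needs an honest argument: if $\Gamma=C_1+\Gamma'$ with $C_1$ the unique component meeting $E$, then $\Gamma'\subset E^{\perp}$, and combining nefness of $A$ with the fact that the intersection form on $E^{\perp}/\Z E$ is negative definite forces $\Gamma'^2\ge 0$ and hence $\Gamma'=0$. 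Your phrase ``using nefness of $A$ to rule out extraneous fixed components'' gestures at this but does not carry it out. Finally, offering ``one simply invokes \cite{May72}'' as the fallback for Step~2 is circular, since that is exactly the result you are proving; the cases $A^2\in\{2,4\}$ really do require a direct analysis of $\varphi_{|A|}$.
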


Since the first part of  Shokurov's proof also works for Fano fourfolds \cite[Thm.5.2]{Kaw00}, the most natural approach would be to look for an analogue of Mayer's theorem for Calabi-Yau threefolds (with mild singularities). 
This approach immediately encounters two obstacles:
while it is easy to classify fixed prime divisors on a K3 surface (they are $(-2)$-curves), there are many possibilities on a Calabi-Yau threefold, e.g.\ Enriques surfaces \cite[Thm.3.1]{BN16}.
Moreover the mobile part of a linear system on a surface is always nef, this is not true on a threefold.

The second obstacle leads to a basic case distinction in our proof: if $M_X$ is nef, 
the basepoint free theorem yields a morphism $X \rightarrow T$ that we can use to study the anticanonical system.
This situation is close to the Examples \ref{example1}, \ref{example2}, \ref{example2bis}, and we will need a series of rather specific classification results to settle this case 
in Section \ref{section-nef-case}.

In the second case, $M_X$ is not nef, we use the embedding $Y \subset X$ in a more direct way. We start by showing that $-K_X+M_X$ is nef and big, i.e.\ the anticanonical divisor 
compensates the negativity of $M_X$. This allows us to show
that the surface $B$ is a complete intersection:
we find an effective divisor $B_X$ with canonical singularities such that 
$$
B = Y \cap B_X.
$$
Moreover, using an extension theorem of Fujino, we obtain that
the restriction map
$$
H^0(X, \sO_X(-K_X)) \rightarrow H^0(B_X, \sO_{B_X}(-K_X))
$$
is surjective. Combined with some classification results 
for linear systems on irregular surfaces, cf.\ Section \ref{section-auxiliary-I}, we exclude this possibility.

\subsection{Future research}

In Theorem \ref{theorem-main} we make the assumption that 
$h^0(X, \sO_X(-K_X)) \geq 3$, an assumption that is satisfied by all smooth Fano fourfolds that we are aware of\footnote{There is an example with $h^0(X, \sO_X(-K_X)) =3$; see \cite[Prop.2.2]{Kue97}\cite[Table 2]{Qur21}.}. Riemann-Roch computations only show that $h^0(X, \sO_X(-K_X)) \geq 2$ (Theorem \ref{theorem-known}), so it would be highly desirable to have an answer to the following 

\begin{problem} \cite{Fuj88two,Kue97, Liu20}
Is there a smooth Fano fourfold $X$ with $h^0(X, \sO_X(-K_X)) = 2$?

Is there a smooth Calabi-Yau threefold  $Y$ with an ample Cartier divisor $A$ such that $h^0(Y, \sO_Y(A))=1$?
\end{problem}

Beauville \cite{Bea99b} gave an example of a numerical Calabi-Yau threefold with a fixed ample divisor, for our purpose we are interested in strict Calabi-Yau's, i.e.\ $Y$ is simply connected.

A significant problem in this theory is the lack of interesting examples. Our Example \ref{example2bis} generalises a construction from the threefold case, but it is still related to the del Pezzo surface of degree one. 

\begin{problem}
Construct new examples of smooth Fano fourfolds $X$ such that $\BsAX$ has dimension two.
\end{problem}

{\bf Acknowledgements.} A.H.\ thanks the Institut Universitaire de France for providing excellent working conditions for this project. S.A.S.\ thanks the LJAD Universit\'e C\^ote d'Azur for partially funding his long stay in Nice, in order to establish this project; he also thanks the Mathematics Department of the University of Turin for the opportunities given during his Ph.D. We thank the referee for the careful verification of our arguments.

\section{Notation and basic facts}

We work over $\C$, for general definitions we refer to \cite{Har77}.
All the schemes appearing in this paper are projective, manifolds and normal varieties will always be supposed to be irreducible.
For notions of positivity of divisors and vector bundles we refer to Lazarsfeld's book \cite{Laz04a, Laz04b}. 
Given two Cartier divisors $D_1, D_2$ on a projective variety we denote by $D_1 \simeq D_2$ (resp. $D_1 \equiv D_2$) the linear equivalence (resp. numerical equivalence) of the
Cartier divisor classes.
 Given a Cartier divisor $D$ we will denote by $\sO_X(D)$ both the associated invertible sheaf and the corresponding line bundle.
Somewhat abusively we will say that a Cartier divisor class is effective if it contains an effective divisor.

We use the terminology of \cite{KM98} for birational geometry and of \cite{Kol96} for rational curves.
We refer to \cite{Kol13} for the definitions and basic facts about singularities of the MMP.

Given a normal projective surface $S$ with Gorenstein singularities we will denote by
$$
q(S) := h^1(S, \sO_S)
$$
the irregularity, and by
$$
p_g(S) := h^2(S, \sO_S) = h^0(S, \sO_S(K_S))
$$
the geometric genus.

\begin{definition}
Let $Y$ be a projective variety, and let $D$ be a nef Cartier divisor on $Y$. The numerical dimension $\nu(Y,D)$ is defined as 
$$
\max\{k \in \N \ | \ D^k \neq 0 \}.
$$
\end{definition}

We collect a number of basic facts for the convenience of the reader:

\begin{fact} \label{fact-not-moving}
Let $A$ be a Cartier divisor on a projective variety $Y$ such that the complete linear system $|A|$ has a non-empty fixed part $B$. Then $h^0(Y,\sO_Y(B))=1$. 
\end{fact}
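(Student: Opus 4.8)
The plan is to prove the equivalent statement that the only effective divisor in the linear equivalence class of $B$ is $B$ itself; since these divisors are exactly the divisors $\operatorname{div}(s)$ of the nonzero sections $s$ of $\sO_Y(B)$, this gives $h^0(Y,\sO_Y(B)) = 1$.

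First I would unwind the structure of $|A|$. Write $M := A - B$, so that $A \simeq M + B$ with $B$ the fixed part and $|M| = |A - B|$ the mobile part. The crucial feature I extract here is that $|M|$ has no fixed prime component. Indeed, since $B$ is contained in every member of $|A|$, every section of $\sO_Y(A)$ vanishes along $B$ and is hence divisible by the canonical section $s_B$ of $\sO_Y(B)$; thus multiplication by $s_B$ identifies $H^0(Y,\sO_Y(M))$ with $H^0(Y,\sO_Y(A))$, and the members of $|A|$ are precisely the divisors $B + M'$ with $M' \in |M|$. If a prime divisor $D$ were a fixed component of $|M|$, then $D$ would be contained in every member of $|M|$, so $B + D$ in every member of $|A|$, contradicting that $B$ is the whole fixed part. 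Hence for each prime divisor $D$ there is a nonzero $t \in H^0(Y,\sO_Y(M))$ with $\operatorname{mult}_D \operatorname{div}(t) = 0$; note also $H^0(Y,\sO_Y(M)) \neq 0$ since $|A| \neq \emptyset$.

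Next comes the main step. Let $B'$ be an effective divisor with $B' \simeq B$, choose $s' \in H^0(Y,\sO_Y(B))$ with $\operatorname{div}(s') = B'$, and for a nonzero $t \in H^0(Y,\sO_Y(M))$ form the nonzero section $t \cdot s'$ of $\sO_Y(M) \otimes \sO_Y(B) \cong \sO_Y(A)$, whose divisor is $\operatorname{div}(t) + B'$. Being a member of $|A|$, this divisor contains the fixed part $B$, so
$$
\operatorname{div}(t) + B' \ \geq\ B .
$$
Reading this at a prime divisor $D$ appearing in $B$ and choosing $t$ (depending on $D$) with $\operatorname{mult}_D \operatorname{div}(t) = 0$ as above, we obtain $\operatorname{mult}_D(B') \geq \operatorname{mult}_D(B)$; at all other primes this holds trivially. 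Therefore $B' \geq B$, and $B' - B$ is then an effective divisor linearly equivalent to $0$ on the projective variety $Y$, forcing $B' = B$.

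I do not anticipate a real obstacle here — the statement is elementary. The only point requiring a bit of care is the first step: deducing from "$B$ is the fixed part of $|A|$" that the residual system $|A - B|$ has no fixed component, and then feeding this in component by component of $B$ to produce the auxiliary sections. (One can instead take a single general section of $\sO_Y(M)$, which simultaneously avoids the finitely many prime components of $B$.)
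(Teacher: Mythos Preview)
Your argument is correct. The paper states this as a ``Fact'' without proof, so there is no approach to compare against; your reduction to showing that $B$ is the unique effective divisor in its class, via the observation that the mobile part $|A-B|$ has no fixed component and hence $\operatorname{div}(t)+B'\geq B$ forces $B'\geq B$ componentwise, is the standard elementary proof and goes through without issue.
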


\begin{fact} \label{fact-bpf-application}
Let $X$ be a Fano manifold, and let $D$ be a nef Cartier divisor on $X$. Then there exists a
morphism with connected fibres $\holom{\varphi}{X}{T}$ and an ample Cartier divisor $H_T$ on $T$ such that $D \simeq \varphi^* H_T$.
\end{fact}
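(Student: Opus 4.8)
The plan is to realise $D$ as the pull-back of an ample class under the contraction of the face of the Mori cone along which $D$ is zero. Since $X$ is Fano, $-K_X$ is ample, so by the cone theorem \cite[Thm.~3.7]{KM98} the cone $\NE{X}$ is rational polyhedral and $K_X\cdot\gamma<0$ for every $\gamma\in\NE{X}\setminus\{0\}$. Because $D$ is nef, the subset
$$
F:=D^{\perp}\cap\NE{X}
$$
is a (rational polyhedral) face of $\NE{X}$, and it is $K_X$-negative; thus $F$ is a $K_X$-negative extremal face in the sense of \cite[\S 3]{KM98}.

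First I would apply the contraction theorem \cite[Thm.~3.7]{KM98} to the face $F$. It produces a morphism with connected fibres $\holom{\varphi}{X}{T}$ onto a normal projective variety $T$ such that an irreducible curve $C\subset X$ is contracted by $\varphi$ if and only if $[C]\in F$. Since $D$ is a Cartier divisor with $D\cdot C=0$ for every curve $C$ with $[C]\in F$ — which is precisely the definition of $F$ — the line-bundle part of the contraction theorem provides a Cartier divisor $H_T$ on $T$ with $D\simeq\varphi^{*}H_T$. Note also that $\varphi^{*}\colon\pic(T)\to\pic(X)$ is injective, since $\varphi_{*}\sO_X=\sO_T$ and the projection formula show that $\varphi^{*}N\simeq\sO_X$ forces $N\simeq\sO_T$.

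It remains to see that $H_T$ is ample, and this is the only point that is not completely formal. Since $\varphi$ is surjective and $\varphi^{*}H_T=D$ is nef, $H_T$ is nef: a curve $C'\subset T$ with $H_T\cdot C'<0$ would be dominated by some curve $C\subset X$, whence $D\cdot C=(\deg(C/C'))\,H_T\cdot C'<0$, a contradiction. To pass from nef to ample one uses that $D$ is semiample: by the basepoint-free theorem \cite[Thm.~3.3]{KM98} (applicable since $D-K_X$ is ample, hence nef and big) the system $|mD|$ is basepoint free for all $m\gg 0$. As $\varphi_{*}\sO_X=\sO_T$, the projection formula gives $H^{0}(T,\sO_T(mH_T))\cong H^{0}(X,\sO_X(mD))$, and since every fibre of $\varphi$ is nonempty, basepoint freeness of $|mD|$ forces that of $|mH_T|$. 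Finally $\phi_{|mH_T|}\circ\varphi=\phi_{|mD|}$, and for $m\gg 0$ the fibres of $\phi_{|mD|}$ are exactly the connected fibres of $\varphi$; hence $\phi_{|mH_T|}$ is finite onto its image, so $mH_T$ is ample and therefore so is $H_T$.

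I expect no genuine obstacle here: the statement is a standard consequence of the basepoint-free and contraction theorems, and the only care needed is that the contraction theorem delivers $H_T$ merely as a Cartier divisor, so its ampleness must be argued by hand as above. If one prefers to avoid the cone theorem, there is a slightly more elementary route: choose $m\gg 0$ with $|mD|$ and $|(m+1)D|$ both basepoint free, Stein-factorise $\phi_{|mD|}$ to obtain $\holom{\varphi}{X}{T}$ with $mD\simeq\varphi^{*}A_m$ for an ample Cartier divisor $A_m$ on $T$, observe (rigidity lemma) that $\phi_{|(m+1)D|}$ has the same fibres since both contract exactly the curves $C$ with $D\cdot C=0$, so that $(m+1)D\simeq\varphi^{*}A_{m+1}$ on the same base, and then take $H_T:=A_{m+1}-A_m$; ampleness then follows from $mH_T\simeq A_m$ using the injectivity of $\varphi^{*}$ on $\pic(T)$.
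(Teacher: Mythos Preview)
Your proof is correct. Interestingly, the paper takes precisely the ``slightly more elementary route'' you sketch in your final paragraph: it applies the basepoint-free theorem to get $mD\simeq\varphi^{*}H_m$ with $H_m$ very ample for all $m\gg 0$ (the map $\varphi$ being the stable Iitaka map of $D$), sets $H_T:=H_{m+1}-H_m$, and deduces ampleness from the observation that $H_T$ is numerically equivalent to the ample $\Q$-class $\tfrac{1}{m}H_m$ (using injectivity of $\varphi^{*}$ on $N^1$). Your main argument instead realises $\varphi$ as the contraction of the $K_X$-negative face $D^{\perp}\cap\NE{X}$ and obtains $H_T$ from the descent part of the contraction theorem, then verifies ampleness by showing $\phi_{|mH_T|}$ is finite. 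Both approaches ultimately rest on the basepoint-free theorem; the paper's is a couple of lines shorter, while yours makes the identification of $\varphi$ with a face contraction explicit, which is conceptually pleasant and makes the connected-fibre and descent properties immediate from \cite[Thm.~3.7]{KM98} rather than from Stein factorisation.
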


\begin{proof}
By the basepoint free theorem we have $mD \simeq \varphi^* H_m$ for some very ample divisor $H_m \rightarrow T$ for every $m \gg 0$. Thus $H_T := H_{m+1}-H_m$ is a Cartier divisor such that $D \simeq \varphi^* H_T$. Since $D$ is numerically the pull-back of an ample class on $T$, the divisor $H_T$ is ample.
\end{proof}

\begin{fact} \label{fact-deformations-surface}
Let $S$ be an irreducible projective surface with Gorenstein singularities, and let $C \subset S$
be an irreducible curve that is not contained in the singular locus of $S$. If $-K_S \cdot C \geq 2$, the curve $C$ deforms in $S$ in a positive-dimensional family.
\end{fact}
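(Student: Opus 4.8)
The plan is to deduce the statement from the classical case of a curve on a smooth surface by passing to a resolution of $S$. Working directly on $S$ is not enough: at a point of $C \cap \mathrm{Sing}(S)$ where $C$ is not a Cartier divisor (this is the only place where such a point can occur, and there are only finitely many of them since $C \not\subset \mathrm{Sing}(S)$), the obstruction space for the Hilbert scheme acquires a local contribution, and the naive bound $\dim_{[C]}\mathrm{Hilb}(S) \geq h^0(C, N_{C/S}) - \mathrm{ext}^1_{\mathcal{O}_S}(\mathcal{I}_C, \mathcal{O}_C)$ can become vacuous --- this already happens for a line in a ruling of the quadric cone, which does move in a positive-dimensional family although the bound only gives $\geq 0$. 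Resolving the ambient surface removes this difficulty.

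First I would assume $S$ normal (the non-normal case is addressed at the end) and let $\pi \colon \widetilde{S} \to S$ be the minimal resolution, with $\widetilde{C} \subset \widetilde{S}$ the strict transform of $C$. Since $C$ is not contained in $\mathrm{Sing}(S)$, the curve $\widetilde{C}$ is not $\pi$-exceptional and $\pi(\widetilde{C}) = C$. Because $S$ is Gorenstein, $K_S$ is Cartier, so $\pi^* K_S$ is an honest divisor; a standard property of the minimal resolution --- its discrepancies are $\leq 0$ --- gives $\pi^* K_S = K_{\widetilde{S}} + D$ with $D \geq 0$ effective and $\pi$-exceptional. Using the projection formula and $D \cdot \widetilde{C} \geq 0$, one obtains
$$
-K_{\widetilde{S}} \cdot \widetilde{C} \;=\; \pi^*(-K_S) \cdot \widetilde{C} + D \cdot \widetilde{C} \;=\; -K_S \cdot C + D \cdot \widetilde{C} \;\geq\; -K_S \cdot C \;\geq\; 2 .
$$

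Next I would run the classical deformation argument on the smooth surface $\widetilde{S}$. The point $[\widetilde{C}]$ of $\mathrm{Hilb}(\widetilde{S})$ has Zariski tangent space $H^0(\widetilde{C}, N_{\widetilde{C}/\widetilde{S}})$ and obstruction space $H^1(\widetilde{C}, N_{\widetilde{C}/\widetilde{S}})$, with $N_{\widetilde{C}/\widetilde{S}} = \mathcal{O}_{\widetilde{C}}(\widetilde{C})$; hence $\dim_{[\widetilde{C}]} \mathrm{Hilb}(\widetilde{S}) \geq \chi(\mathcal{O}_{\widetilde{C}}(\widetilde{C}))$. Writing $g := p_a(\widetilde{C}) \geq 0$ and combining Riemann--Roch on the curve $\widetilde{C}$ with the adjunction formula $\widetilde{C}^2 = 2g - 2 - K_{\widetilde{S}} \cdot \widetilde{C}$ on $\widetilde{S}$, one finds
$$
\chi\bigl(\mathcal{O}_{\widetilde{C}}(\widetilde{C})\bigr) \;=\; \widetilde{C}^2 + 1 - g \;=\; g - 1 + \bigl(-K_{\widetilde{S}} \cdot \widetilde{C}\bigr) \;\geq\; g + 1 \;\geq\; 1 .
$$
Therefore $\widetilde{C}$ moves in a positive-dimensional family of curves on $\widetilde{S}$. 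Over a dense open subset of the base the members of this family are irreducible and not $\pi$-exceptional, and since $\pi$ is an isomorphism over the smooth locus of $S$ they have pairwise distinct images on $S$; as $\pi(\widetilde{C}) = C$, this exhibits $C$ as a member of a positive-dimensional family of curves on $S$.

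The only step that really requires care is the reduction in the second paragraph: it relies on the fact that the minimal resolution of a normal surface has non-positive discrepancies (so that $D \geq 0$ and hence $-K_{\widetilde{S}} \cdot \widetilde{C} \geq -K_S \cdot C$), and the Gorenstein hypothesis is used precisely to ensure that $K_S$, and therefore $D$, is integral. If one does not want to assume $S$ normal, one first passes to the normalisation $\nu \colon S^{\nu} \to S$ and to the strict transform $C^{\nu}$ of $C$; since $S$ is Gorenstein the conductor is an effective divisor, so $-K_{S^{\nu}} = \nu^*(-K_S) + (\text{conductor})$ and $-K_{S^{\nu}} \cdot C^{\nu} \geq -K_S \cdot C \geq 2$, after which the preceding argument applies to $S^{\nu}$.
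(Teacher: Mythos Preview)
Your proof is correct and follows essentially the same approach as the paper: pass to the composition of normalisation and minimal resolution, use that the discrepancies are non-positive (together with the Gorenstein hypothesis and the conductor formula) to preserve the inequality $-K \cdot C \geq 2$ on the smooth model, and then conclude by deformation theory. The only difference is that the paper compresses the last step into a citation of \cite[Thm.~1.15]{Kol96}, whereas you spell out the Hilbert-scheme dimension bound $\dim_{[\widetilde C]}\mathrm{Hilb}(\widetilde S)\geq \chi(\sO_{\widetilde C}(\widetilde C))=g-1+(-K_{\widetilde S}\cdot\widetilde C)\geq 1$ explicitly and then push the family down to $S$; this is exactly the content of the cited result in the surface case.
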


\begin{proof}
Let $\holom{\tau}{S'}{S}$ be the composition of normalisation and minimal resolution.
Then we have $K_{S'} \simeq \tau^* K_S - E$ where $E$ is an effective divisor that
maps into the singular locus of $S$. Thus if $C' \subset S'$ is the strict transform, it is not contained in the support of $E$. Thus we have
$$
-K_{S'} \cdot C' \geq -K_S \cdot C \geq 2.
$$
The statement now follows from \cite[Thm.1.15]{Kol96}.
\end{proof}

\begin{fact} \label{fact-albanese} \cite[Lemma 2.4.1]{BS95}
Let $X$ be a normal projective variety with rational singularities such that $q(X)>0$.
Then the Albanese map $\holom{\alpha}{X}{A}$ to the Albanese torus exists and is determined
by the Albanese map of some resolution of singularities.
\end{fact}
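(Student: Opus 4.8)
The plan is to reduce the statement to a single resolution of singularities $\holom{\pi}{M}{X}$ and to descend the Albanese morphism of $M$ along $\pi$. First I would collect the cohomological input. Since $X$ is normal we have $\pi_*\sO_M = \sO_X$, and since $X$ has rational singularities $R^i\pi_*\sO_M = 0$ for all $i \geq 1$; feeding this into the Leray spectral sequence makes it degenerate, so the pullback $\pi^*\colon H^i(X,\sO_X) \to H^i(M,\sO_M)$ is an isomorphism for all $i$. In particular $q(M) = h^1(M,\sO_M) = h^1(X,\sO_X) = q(X) > 0$, so the Albanese torus $A := \mathrm{Alb}(M)$ is a positive-dimensional abelian variety and the Albanese morphism $\holom{a}{M}{A}$ is non-constant. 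The goal is then to show that $a$ is constant along the fibres of $\pi$; the induced morphism $\holom{\alpha}{X}{A}$ will be the Albanese map of $X$, and independence of the choice of resolution is automatic, since any two resolutions are dominated by a third and the Albanese is a birational invariant of smooth projective varieties.

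Granting fibrewise constancy, the descent is soft. As $\pi_*\sO_M = \sO_X$ and $\pi$ is proper, the fibres of $\pi$ are connected, so $(\pi,a)\colon M \to X \times A$ has image a subvariety that maps bijectively, finitely and birationally onto the normal variety $X$, hence isomorphically; composing the inverse with the projection to $A$ produces $\alpha$ with $\alpha\circ\pi = a$, and $\alpha$ is the unique such morphism because $\pi$ is surjective. That $\alpha$ satisfies the universal property of the Albanese is then a formal check: for any morphism $f$ from $X$ to an abelian variety, $f\circ\pi$ factors uniquely through $a$ by the universal property of $\mathrm{Alb}(M)$, and the factorization descends because $\pi$ is an epimorphism; moreover $\alpha(X) = a(M)$ generates $A$.

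The crux — and the step I expect to be the real obstacle — is showing that $a$ contracts each fibre $F := \pi^{-1}(x)$ of $\pi$. Since such a fibre can be positive-dimensional and need not be rationally connected, one must genuinely use the rational-singularities hypothesis and not merely the existence of a resolution. The argument I would give is the following. Pick a resolution $\mu\colon F' \to F_{\red}$ and let $j\colon F' \to M$ be the composite. Then $a\circ j$ factors through $\mathrm{Alb}(F')$, and its image is a translate of the image of the induced homomorphism $\mathrm{Alb}(F') \to A$; this homomorphism vanishes — equivalently $a(F)$ is a single point — exactly when the pullback of $1$-forms $j^*\colon H^0(M,\Omega^1_M) \to H^0(F',\Omega^1_{F'})$ is zero, which by Hodge symmetry on the smooth projective varieties $M$ and $F'$ is in turn equivalent to the vanishing of $j^*\colon H^1(M,\sO_M) \to H^1(F',\sO_{F'})$. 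Since $F'$ is reduced, $j$ factors through the reduced fibre, hence through the scheme-theoretic fibre $\pi^{-1}(x)$, so this last map factors through the restriction $H^1(M,\sO_M) \to H^1(\pi^{-1}(x), \sO)$. Writing $M_n \subset M$ for the subscheme defined by $\mathfrak{m}_x^n\sO_M$, so that $M_1 = \pi^{-1}(x)$, Grothendieck's theorem on formal functions \cite{Har77} identifies the target of the comparison map $H^1(M,\sO_M) \to \varprojlim_n H^1(M_n,\sO_{M_n})$ with the $\mathfrak{m}_x$-adic completion of the stalk $(R^1\pi_*\sO_M)_x$, which is $0$ because $X$ has rational singularities; as the restriction $H^1(M,\sO_M) \to H^1(M_1,\sO_{M_1})$ is the $n=1$ component of this comparison map, it vanishes. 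Therefore $j^*$ vanishes on $H^1(\sO)$, hence on $H^0(\Omega^1)$, and $a(F)$ is a point, as required.

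Two remarks about executing this. One should spell out the compatibility of the comparison map in the formal functions theorem with the naive restriction $H^1(M,\sO_M)\to H^1(\pi^{-1}(x),\sO) = H^1(M_1,\sO_{M_1})$, so that vanishing of the inverse limit really forces this restriction to vanish; this is built into the construction of the map but deserves a line. And it is worth noting that the hypothesis is sharp precisely here: over a normal but non-rational singularity — for instance a cone over a smooth curve of positive genus — the Albanese of a resolution genuinely fails to descend, which is why no argument bypassing the vanishing $R^1\pi_*\sO_M = 0$ can work.
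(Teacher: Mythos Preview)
The paper does not give its own proof of this statement: it is recorded as a Fact with a citation to \cite[Lemma 2.4.1]{BS95}. Your proposal is therefore supplying an argument where the paper supplies none, and it is the standard one. The three ingredients --- the Leray isomorphism $H^1(X,\sO_X)\simeq H^1(M,\sO_M)$ coming from $R^{>0}\pi_*\sO_M=0$, the Hodge-theoretic translation of ``$a$ contracts a fibre'' into vanishing of $j^*$ on $H^1(\sO)$, and the factorisation of the restriction $H^1(M,\sO_M)\to H^1(\fibre{\pi}{x},\sO)$ through the zero stalk $(R^1\pi_*\sO_M)_x$ --- are assembled correctly, and the descent via normality of $X$ is fine (indeed, once $a$ is constant on fibres, the factorisation through $\pi$ is immediate from $\pi_*\sO_M=\sO_X$, so the graph argument is more than you need).

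One cosmetic remark: the theorem on formal functions is not really needed. You only use that the restriction $H^1(M,\sO_M)\to H^1(\fibre{\pi}{x},\sO)$ factors through the stalk $(R^1\pi_*\sO_M)_x$, which is zero; the passage to the completion and the inverse system $\{M_n\}$ adds nothing. Otherwise the argument is complete and matches what one finds in \cite{BS95}.
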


\begin{fact} \label{fact-RR}
Let $S$ be a normal projective surface with rational singularities, and let $A$ be a Cartier divisor on $S$. Then the Riemann-Roch formula 
$$
\chi(S, \sO_S(A)) = \frac{1}{2} A^2 + \frac{1}{2} (-K_S) \cdot A + \chi(S, \sO_S)
$$
holds.

Let $Y$ be a normal projective threefold with terminal singularities,
and let $A$ be a Cartier divisor on $Y$. Then the Riemann-Roch formula 
$$
\chi(Y, \sO_Y(A)) = 
\frac{1}{12} A \cdot (A-K_Y) \cdot (2A-K_Y) + \frac{1}{12} A \cdot c_2(Y)
+ \chi(Y, \sO_Y)
$$
holds.
\end{fact}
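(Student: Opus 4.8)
The plan is to deduce both formulas from Hirzebruch--Riemann--Roch on a resolution of singularities, using the rationality of the singularities (recall that terminal $\Rightarrow$ canonical $\Rightarrow$ rational) to identify Euler characteristics upstairs and downstairs. The only content beyond this reduction is the bookkeeping that turns intersection numbers on the resolution into the intrinsic ones appearing in the statement, and the one genuinely delicate point will be the meaning of $A\cdot c_2(Y)$ in the singular threefold case.

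For the surface, I would fix a resolution $\holom{\pi}{S'}{S}$. Since $S$ has rational singularities, $\pi_*\sO_{S'}=\sO_S$ and $R^i\pi_*\sO_{S'}=0$ for $i>0$; combined with the projection formula $R^i\pi_*\pi^*\sO_S(A)\cong\sO_S(A)\otimes R^i\pi_*\sO_{S'}$ and the Leray spectral sequence this gives $\chi(S,\sO_S(A))=\chi(S',\pi^*\sO_S(A))$ and $\chi(S,\sO_S)=\chi(S',\sO_{S'})$. Now apply ordinary surface Riemann--Roch on the smooth surface $S'$, which reads $\chi(S',\pi^*\sO_S(A))=\tfrac{1}{2}(\pi^*A)^2+\tfrac{1}{2}(-K_{S'})\cdot\pi^*A+\chi(S',\sO_{S'})$, and feed in the projection formula once more: $(\pi^*A)^2=A^2$ and $K_{S'}\cdot\pi^*A=(\pi_*K_{S'})\cdot A=K_S\cdot A$, the right-hand side being the unambiguous pairing of the Weil divisor $K_S$ with the Cartier divisor $A$. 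Combining the displays yields the claim; note that no Gorenstein hypothesis is needed here.

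For the threefold I would run the same argument with a resolution $\holom{\pi}{Y'}{Y}$ of the terminal (hence rational) threefold $Y$, so that $\chi(Y,\sO_Y(A))=\chi(Y',\pi^*\sO_Y(A))$ and $\chi(Y,\sO_Y)=\chi(Y',\sO_{Y'})$, and then substitute $L=\pi^*\sO_Y(A)$ into threefold Hirzebruch--Riemann--Roch. A one-line rearrangement, $\tfrac{1}{12}L(L-K_{Y'})(2L-K_{Y'})=\tfrac{1}{6}L^{3}-\tfrac{1}{4}L^{2}K_{Y'}+\tfrac{1}{12}LK_{Y'}^{2}$, puts the output into the shape of the statement. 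The terms built from $\pi^*A$ and $K_{Y'}$ descend to $Y$ via the projection formula --- here one uses that $K_Y$ is $\Q$-Cartier and that the $\pi$-exceptional divisors are contracted to points (terminal threefold singularities being isolated), so that $\pi$-exceptional cycle classes push forward to zero. Finally one simply \emph{defines} $A\cdot c_2(Y):=\pi^*A\cdot c_2(Y')$.

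The step I expect to be the only real obstacle is showing that $A\cdot c_2(Y)$ is independent of the chosen resolution. I would compare two resolutions through a common one $W$ dominating both: over the smooth locus of $Y$ the tangent bundles agree, so the two pullbacks of $c_2$ to $W$ differ by a cycle class supported on the exceptional locus over the finitely many singular points of $Y$; since $A$ is Cartier, $\pi^*A$ is trivial in a neighbourhood of each such point and annihilates this difference, and the projection formula transports the resulting equality of numbers back to $Y$. It is exactly here that the Cartier hypothesis on $A$ is essential: for a merely Weil divisor one would be forced into the orbifold correction terms of Reid's Riemann--Roch, localised at the singular points.
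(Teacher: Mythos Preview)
Your proposal is correct and follows exactly the approach the paper sketches: reduce to a resolution of singularities, use rationality of the singularities to identify Euler characteristics via Leray and the projection formula, and then apply Hirzebruch--Riemann--Roch upstairs. The paper's own proof is a two-line pointer (``all the terms are well-defined and can be calculated on a resolution \ldots\ the statement then follows via the projection formula'', with a reference to \cite{Hoe12}), so your write-up is in fact a fleshed-out version of the same argument; in particular your treatment of the well-definedness of $A\cdot c_2(Y)$ via a common roof is precisely the content hidden behind the paper's ``well-known'' --- just make sure you take resolutions that are isomorphisms over the smooth locus so that the tangent-bundle comparison over $Y_{\mathrm{sm}}$ goes through.
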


\begin{proof}
It is sufficient to verify that all the terms are well-defined and can be calculated on a resolution of singularities. This is well-known, cf.\ \cite[Sect.2]{Hoe12}, the statement then follows via the projection formula.
\end{proof}

In the rest of the paper we will refer to the following set of statements by 
inversion of adjunction:

\begin{theorem} \label{theorem-adjunction} \cite[Thm.5.50]{KM98} \cite[Thm.]{Kaw07} \cite[Thm.4.9]{Kol13}
Let $X$ be a normal projective variety, and let $S \subset X$ be a reduced Weil divisor that is Cartier in codimension two. Let $\Delta$ be an effective boundary divisor on $X$ that has no common component with $S$ such that $K_X+S+\Delta$ is $\Q$-Cartier. Then the following holds:
\begin{itemize}
\item The pair $(X, S+\Delta)$ is lc near $S$
if and only if the pair $(S, \Delta|_S)$ is slc.
\item The pair $(X, S+\Delta)$ is plt near $S$
if and only if the pair $(S, \Delta|_S)$ is klt.
\end{itemize}
\end{theorem}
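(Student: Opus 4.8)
The plan is to explain how the four implications are distributed among the cited references and to check that the running hypotheses are compatible; there is nothing genuinely new to prove here. The two ``only if'' implications are \emph{adjunction}, which is the elementary direction. Since $S$ is Cartier in codimension two and $K_X+S+\Delta$ is $\Q$-Cartier, the different $\mathrm{Diff}_S(\Delta)$ is a well-defined $\Q$-divisor on $S$ (and on its normalization), and because $S$ is Cartier in codimension two no correction term arises from the non-Cartierness of $S$, so $\mathrm{Diff}_S(\Delta)$ coincides with the divisor $\Delta|_S$ cut out by $\Delta$; the adjunction formula $(K_X+S+\Delta)|_S \sim_{\Q} K_S+\Delta|_S$ then holds. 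Taking a common log resolution and comparing log discrepancies of divisors whose centre meets $S$ shows that if $(X,S+\Delta)$ is lc (resp.\ plt) near $S$, then $(S,\Delta|_S)$ is slc (resp.\ klt); for the non-normal conclusion one passes to the normalization of $S$ and brings in the conductor divisor, as in \cite{Kol13}.

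The substance lies in the ``if'' implications, i.e.\ \emph{inversion of adjunction}. For the plt/klt equivalence I would quote \cite[Thm.5.50]{KM98}: its proof runs through the connectedness theorem for the non-klt locus of $(X,S+\Delta)$ near $S$. Concretely, if $(S,\Delta|_S)$ is klt, then in a neighbourhood of $S$ the only log canonical centre of $(X,S+\Delta)$ is $S$ itself, and connectedness forbids any further centre from meeting $S$, forcing the pair to be plt there. For the lc/slc equivalence the normal case is Kawakita's theorem \cite[Thm.]{Kaw07}, whose proof is considerably deeper, relying on the minimal model program and a delicate valuative analysis; the passage to $S$ non-normal together with the semi-log-canonical conclusion is \cite[Thm.4.9]{Kol13}, which again reduces to the normal case on the normalization of $S$ after accounting for the conductor.

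The main obstacle, were one to insist on a self-contained argument rather than a citation, is precisely the log canonical case: inversion of adjunction for lc pairs does not follow from connectedness alone and is genuinely hard (it was the content of Kawakita's paper). For our purposes this is immaterial, since we only ever invoke the statement as a black box; the sole care needed at each application is to verify that the divisor $S$ we restrict to is Cartier in codimension two and that $K_X+S+\Delta$ is $\Q$-Cartier, after which $\Delta|_S$ may be identified with the different and the quoted results apply verbatim.
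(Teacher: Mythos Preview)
Your proposal is correct, but note that the paper does not actually give a proof of this statement: it is recorded as a known result with citations to \cite{KM98}, \cite{Kaw07}, and \cite{Kol13}, and is then used as a black box throughout. Your write-up goes further than the paper by explaining how the four implications are distributed among the three references (adjunction being elementary, the plt/klt inversion coming from connectedness as in \cite[Thm.5.50]{KM98}, the lc inversion being Kawakita's theorem, and the non-normal slc formulation being \cite[Thm.4.9]{Kol13}); this is an accurate and helpful gloss, but it is commentary on the literature rather than a comparison with any argument in the paper itself.
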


\subsection{Anticanonical divisors on Fano fourfolds}

The following statement collects the known results about anticanonical divisors
on smooth Fano fourfolds 

\begin{theorem} \label{theorem-known} \cite[Thm.5.2]{Kaw00},\cite[Thm.1.7]{HV11}, \cite[Thm.2]{Heu15}
Let $X$ be a smooth Fano fourfold. Then we have
$$
h^0(X, \sO_X(-K_X)) \geq 2.
$$
Let $Y \in \AX$ be a general anticanonical divisor. Then $Y$ is a normal prime divisor
with terminal Gorenstein singularities. The variety $Y$ is Calabi-Yau, i.e.\ $K_Y \simeq \sO_Y$ and
$$
H^1(Y, \sO_Y) = H^2(Y, \sO_Y) = 0.
$$
\end{theorem}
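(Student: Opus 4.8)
The plan is to separate the assertions into a ``soft'' part, obtained from adjunction and Kodaira vanishing, and a ``hard'' part --- the lower bound on $h^0$ and the control of the singularities of the general member --- which I would import from the cited literature.

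For the soft part I would argue as follows. Since $-K_X$ is ample, Kodaira vanishing applied to $\sO_X = \sO_X(K_X)\otimes\sO_X(-K_X)$ and to $\sO_X(-K_X) = \sO_X(K_X)\otimes\sO_X(-2K_X)$ gives $H^i(X,\sO_X) = 0$ and $H^i(X,\sO_X(-K_X)) = 0$ for all $i>0$. Hence $\chi(X,\sO_X)=1$ and $h^0(X,\sO_X(-K_X)) = \chi(X,\sO_X(-K_X))$, and a Hirzebruch--Riemann--Roch computation shows $\chi(X,\sO_X(-K_X)) \geq 2$ (see \cite[Thm.5.2]{Kaw00}); in particular $\AX$ is at least a pencil, so a general member varies in a positive-dimensional family.

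Next I would record the structural properties of a general $Y \in \AX$: that such a $Y$ is a reduced and irreducible normal threefold with terminal singularities is exactly \cite[Thm.1.7]{HV11} together with \cite[Thm.2]{Heu15}, and this is the main external input. For orientation: away from $\BsAX$ a general $Y$ is smooth by Bertini, so the only issue is local along the base locus, and one reduces to showing that the pair $(X,Y)$ is plt for general $Y$; inversion of adjunction (Theorem \ref{theorem-adjunction}, applied with $S=Y$ and $\Delta=0$, which is legitimate since $Y$ is a Cartier divisor and $K_X+Y\simeq 0$) then yields that $Y$ is klt, whence canonical, because a klt Gorenstein variety has integral discrepancies $>-1$ and therefore $\geq 0$. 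The genuine obstacle --- the step I would not reprove --- is the upgrade from canonical to terminal for the general member, which is the real content of \cite{HV11, Heu15}. Granting normality, $Y$ is moreover Gorenstein, being a hypersurface in the smooth variety $X$, hence a local complete intersection with invertible dualizing sheaf.

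With the singularities in hand, the Calabi--Yau properties are formal. Adjunction for the Cartier divisor $Y\subset X$ gives $\sO_Y(K_Y)\cong\sO_X(K_X+Y)|_Y\cong\sO_Y$ since $Y\simeq -K_X$, so $K_Y\simeq\sO_Y$. For the vanishing of $H^1$ and $H^2$ I would twist the ideal sheaf sequence of $Y$, using $\sO_X(-Y)\cong\sO_X(K_X)$, to get
$$
0 \longrightarrow \sO_X(K_X) \longrightarrow \sO_X \longrightarrow \sO_Y \longrightarrow 0 .
$$
Taking cohomology, for $i=1,2$ one has $H^i(X,\sO_X)=0$ by the discussion above and $H^{i+1}(X,\sO_X(K_X))\cong H^{3-i}(X,\sO_X)^{\vee}=0$ by Serre duality (as $3-i>0$), hence $H^1(Y,\sO_Y)=H^2(Y,\sO_Y)=0$. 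This exhausts the statement, the only non-elementary inputs being $\chi(X,\sO_X(-K_X))\geq 2$ and the terminality of the general anticanonical divisor.
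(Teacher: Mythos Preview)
Your proposal is correct. The paper itself gives no proof of this statement: it is presented purely as a collection of results from the cited references \cite{Kaw00, HV11, Heu15}, so there is nothing to compare against beyond checking that your account is faithful to what those references establish. Your separation into a ``soft'' part (Kodaira vanishing, adjunction, the ideal-sheaf sequence) and a ``hard'' part (the Riemann--Roch estimate and the terminality of the general member, both imported) is exactly right, and the details you give for the soft part are accurate --- in particular the Serre-duality bookkeeping $H^{i+1}(X,\sO_X(K_X))\cong H^{3-i}(X,\sO_X)^{\vee}$ is correct on a fourfold. Your orientation remark that inversion of adjunction already gives canonicity, with the passage to terminal being the genuine content of \cite{HV11, Heu15}, is also accurate and a helpful gloss that the paper does not spell out.
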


In fact the possible singularities of $Y$ are completely described by \cite[Thm.4]{Heu15}, but we will not need this description for our proof. 
By \cite[Lemma 5.1]{Kaw88} a terminal $\Q$-factorial Gorenstein threefold singularity is factorial, so Theorem \ref{theorem-known} implies:

\begin{corollary} \label{corollary-factorial}
Let $X$ be a smooth Fano fourfold. If a general anticanonical divisor $Y$ is $\Q$-factorial,
it is factorial.
\end{corollary}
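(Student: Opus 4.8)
The plan is to deduce the statement formally from the two ingredients already recalled above. By Theorem~\ref{theorem-known}, a general anticanonical divisor $Y \in \AX$ is a normal projective threefold with terminal Gorenstein singularities; in particular, for every point $y \in Y$ the local ring $\sO_{Y,y}$ is a terminal Gorenstein threefold singularity. The key input is \cite[Lemma~5.1]{Kaw88}, which says that such a singularity is factorial as soon as it is $\Q$-factorial, i.e.\ that $\sO_{Y,y}$ is then a UFD.

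First I would pass from the global hypothesis to a local one: if $Y$ is $\Q$-factorial and $D$ is any Weil divisor on $Y$, then some multiple $mD$ is Cartier, hence locally principal at $y$, so the local ring $\sO_{Y,y}$ is $\Q$-factorial for every $y$. Applying \cite[Lemma~5.1]{Kaw88} at each point, every $\sO_{Y,y}$ is a UFD. Since a Weil divisor on a normal variety is Cartier exactly when it is locally principal at every point, it follows that every Weil divisor on $Y$ is Cartier, that is, $Y$ is factorial.

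There is no genuine obstacle here: the corollary is essentially formal once Theorem~\ref{theorem-known} and Kawamata's lemma are granted. The only point deserving a word of care is the interplay between the global and the pointwise versions of factoriality; for factoriality — unlike for $\Q$-factoriality, where a uniform Cartier index is needed — this is harmless, because being Cartier is itself a condition that can be checked stalk by stalk.
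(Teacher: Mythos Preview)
Your argument is correct and is exactly the paper's approach: the corollary is stated immediately after the sentence ``By \cite[Lemma 5.1]{Kaw88} a terminal $\Q$-factorial Gorenstein threefold singularity is factorial, so Theorem \ref{theorem-known} implies:'', and your write-up simply unpacks this one-line deduction with the routine passage between local and global (factoriality and $\Q$-factoriality).
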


\begin{theorem} \cite[Theorem]{Bor91} \label{theorem-kollar}
Let $X$ be a Fano manifold of dimension at least four, and let $Y \in |-K_X|$ be an irreducible\footnote{The statement in \cite{Bor91} is for smooth divisors, but the proof works without this assumption.} anticanonical divisor. Then the inclusion
$$
\NE{Y} \hookrightarrow \NE{X}
$$
is a bijection. Equivalently a Cartier divisor $D \rightarrow X$ is nef if and only if
its restriction $D|_Y$ is nef.
\end{theorem}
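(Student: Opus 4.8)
The plan is to prove the equivalent statement that the pushforward map $\NE{Y}\to\NE{X}$ is surjective. That it is injective (so that we really have an inclusion) is the soft part: it follows from the Lefschetz-type isomorphism $N_1(Y)\xrightarrow{\sim}N_1(X)$ for the ample divisor $Y$ of dimension $\dim X-1\geq 3$, the same input as the isomorphism $\pic(X)\cong\pic(Y)$ used in \S\ref{setup}. The numerical reformulation is then immediate: $D$ is nef on $X$ iff $D\cdot\gamma\geq 0$ for all $\gamma\in\NE{X}$, and since $\NE{Y}$ surjects onto $\NE{X}$ this holds iff $D|_Y\cdot\delta\geq 0$ for all $\delta\in\NE{Y}$, i.e. iff $D|_Y$ is nef, by the projection formula. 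As $X$ is Fano, $\NE{X}$ is rational polyhedral, so it suffices to show that every extremal ray $R$ of $\NE{X}$ is generated by the class of a curve contained in $Y$.

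Fix such an $R$ with contraction $\varphi\colon X\to Z$. Since $\varphi$ contracts $R$ it has a fibre of positive dimension, and any curve lying in a fibre of $\varphi$ has class in $R$. If $\varphi$ has a fibre $F$ with $\dim F\geq 2$, then, $Y$ being an ample divisor, $Y\cap F$ is a non-empty effective divisor on $F$, hence contains a curve $C$; since $C\subset F$ we get a non-zero class $[C]\in R$, so $R$ is generated by a curve of $Y$. This reduces us to extremal contractions all of whose fibres have dimension at most one. By Ando's classification of Fano--Mori contractions from a smooth variety with fibre dimension $\leq 1$, such a $\varphi$ is either a conic bundle (of fibre type, with $\dim Z=\dim X-1$) or the blow-up of a smooth codimension-two subvariety $W\subset Z$, so that $E:=\mathrm{Exc}(\varphi)\to W$ is a $\PP^1$-bundle; in particular no small contraction occurs here.

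In both remaining cases I would show that $Y$ actually contains an entire fibre of $\varphi$, by a positivity argument. Let $s\in H^0(X,\sO_X(-K_X))$ be a section cutting out $Y$. In the conic bundle case, $\sE:=\varphi_*\sO_X(-K_X)$ is locally free of rank $3$ on $Z$ (fibrewise $H^0$ of a degree-two line bundle on a conic, with vanishing $H^1$), and it is ample because $-K_X$ is ample on $X$; via $H^0(X,\sO_X(-K_X))\cong H^0(Z,\sE)$ and cohomology and base change, $s$ vanishes at a point $z_0\in Z$ iff $\varphi^{-1}(z_0)\subseteq Y$. In the blow-up case the analogous role is played by $\sE:=\varphi_*\bigl(\sO_X(-K_X)|_E\bigr)$, a vector bundle of rank $2$ on $W$ (fibrewise $H^0(\PP^1,\sO(1))$, since $-K_X\cdot(\mathrm{fibre})=1$), again ample, and $s|_E$ is a non-zero section of it because the irreducible $Y$ cannot contain $E$; it vanishes at $w_0\in W$ iff $\varphi^{-1}(w_0)\subseteq Y$. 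In either case the base ($Z$, resp.\ $W$) has dimension $\geq\dim X-2\geq 2=\rk\sE$ or $\geq 3=\rk\sE$; hence the top Chern class $c_{\rk\sE}(\sE)$ is non-zero, so $\sE$ has no nowhere-vanishing section and the section under consideration must vanish at some point. The corresponding fibre of $\varphi$ then lies in $Y$, and its components are curves with class in $R$.

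The step I expect to be the main obstacle is making this case distinction airtight for $\dim X\geq 4$: one needs that every extremal contraction either has a fibre of dimension $\geq 2$ (handled by intersecting with the ample divisor $Y$) or, by Ando's theorem, is one of the two explicit types above with base of dimension $\geq\dim X-2\geq\rk\sE$ — in particular that no small contraction with one-dimensional fibres exists, which in dimension four also requires Kawamata's description of the exceptional loci of small contractions of smooth fourfolds. This is exactly where $\dim X\geq 4$ is used, and it is genuinely needed: the analogue fails in dimension three already for the blow-up of a line in $\PP^3$, where the fibre $\PP^1$ of the exceptional divisor has $-K_X\cdot\PP^1=1$ but the base of that contraction is only a curve, so the ample rank-two bundle $\sE$ sits over a base of dimension smaller than its rank and may well admit a nowhere-vanishing section.
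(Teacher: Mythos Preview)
The paper does not supply its own proof of this theorem; it is quoted from Koll\'ar's appendix \cite{Bor91}, with the remark that the smoothness hypothesis on $Y$ can be weakened to irreducibility. Your outline is essentially a reconstruction of Koll\'ar's original argument, so in that sense it matches what the paper invokes.

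The architecture is correct: reduce via the cone theorem to a single extremal ray $R$, handle contractions with a fibre of dimension $\geq 2$ by intersecting that fibre with the ample divisor $Y$, and for contractions whose fibres all have dimension $\leq 1$ use Ando's theorem to land in the smooth blow-up or conic-bundle case, then force the anticanonical section to vanish along an entire fibre via a top-Chern-class obstruction on an ample vector bundle over a base of dimension at least the rank. In the blow-up case your ampleness claim is clean: $E=\PP(\sE)$ with $\sO_{\PP(\sE)}(1)=(-K_X)|_E$, so ampleness of $-K_X|_E$ is literally the ampleness of $\sE$.

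The step that is not yet justified is the conic-bundle case. You assert that $\sE=\varphi_*\sO_X(-K_X)$ is ample ``because $-K_X$ is ample on $X$'', but pushforward does not preserve ampleness in general. Here $X\hookrightarrow\PP(\sE)$ sits only as a divisor (a relative conic in $|\sO(2)\otimes\pi^*M|$), and ampleness of $\sO_{\PP(\sE)}(1)|_X=-K_X$ does not formally yield ampleness of $\sO_{\PP(\sE)}(1)$ on the ambient $\PP^2$-bundle. In the special case of an honest $\PP^1$-bundle $X=\PP(\sV)$ one rescues this directly, since then $\sE\cong S^2\sV\otimes\sO_Z(D)$ is the second symmetric power of the ample $\Q$-twisted bundle $\sV\langle\tfrac{1}{2}D\rangle$; the genuine conic-bundle case needs an additional argument, which you should supply or cite. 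Finally, your invocation of Kawamata's classification of small fourfold contractions is unnecessary here: Ando already excludes small contractions among those with fibre dimension $\leq 1$, and the small contractions (with $\PP^2$ exceptional loci) are covered by your ``fibre of dimension $\geq 2$'' case.
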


\begin{theorem} \label{theorem-kawamata} \cite[Prop.4.2]{Kaw00}
Let $Y$ be a normal projective threefold with canonical Gorenstein singularities such that $c_1(Y)=0$.  Let $A \rightarrow Y$ be an ample Cartier divisor, and let $D \in |A|$ be a general element. Then the pair $(Y, D)$ is lc.
\end{theorem}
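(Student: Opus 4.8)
The plan is to argue by contradiction via the log canonical threshold, using the hypothesis $c_1(Y)=0$ to present $\mathcal O_Y(A)$ as an adjoint line bundle with ample residual part, which unlocks the relevant vanishing and extension theorems. Set $c:=\mathrm{lct}(Y,D)$; if $(Y,D)$ is not lc then $c<1$, and $(Y,cD)$ is lc but not klt. Because $D$ is general, Bertini's theorem shows that $D$ is smooth away from $\BsA\cup Y_{\sing}$ and misses the isolated singular points of $Y$ outside $\BsA$; since $Y$ has canonical, hence klt, singularities, the non-klt locus of $(Y,cD)$ is thus contained in $\BsA$. After observing that $|A|$ has no fixed component of multiplicity $\geq 2$ (such a component would already make $(Y,D)$ non-lc, and it does not occur for $A$ ample on a Calabi--Yau threefold, e.g.\ by a Riemann--Roch estimate for $h^0(Y,\mathcal O_Y(A))$), we may assume $\BsA$ has codimension $\geq 2$ in $Y$.

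Let $W$ be a minimal non-klt centre of $(Y,cD)$. By Kawamata's theory $W$ is normal, and since $W\subseteq\BsA$ it has dimension $\leq 1$. Here $c_1(Y)=0$ enters decisively: as $K_Y\equiv 0$ we have
$$
A-(K_Y+cD)\;\sim_{\mathbb Q}\;(1-c)A,
$$
which is ample, so Kawamata's extension theorem for the minimal log canonical centre $W$ gives a surjection
$$
H^0\big(Y,\mathcal O_Y(A)\big)\;\lra\;H^0\big(W,\mathcal O_W(A)\otimes\sJ_W\big),
$$
where $\sJ_W\subsetneq\mathcal O_W$ is the nonzero ideal produced by the subadjunction formula, with $\sJ_W=\mathcal O_W$ when $\dim W=0$. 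On the other hand $W\subseteq\BsA$, so every global section of $\mathcal O_Y(A)$ vanishes along $W$ and the restriction map above is the zero map; hence $H^0\big(W,\mathcal O_W(A)\otimes\sJ_W\big)=0$.

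If $\dim W=0$ this reads $k(W)=0$, which is absurd. If $\dim W=1$, then $W$ is a smooth projective curve; writing $d:=A\cdot W\geq 1$, Kawamata subadjunction $(K_Y+cD)|_W\sim_{\mathbb Q}K_W+\Delta_W+\mathbf{M}_W$ (with $\Delta_W\geq 0$ and $\mathbf{M}_W$ nef) together with $(K_Y+cD)|_W\sim_{\mathbb Q}c\,A|_W$ forces $2g(W)-2\leq c\,d$, whereas the vanishing $H^0=0$ combined with Riemann--Roch on $W$ pushes $g(W)$ upward, and the multiplicity of a general $D$ along $W$ — necessarily $\geq 3$, or else $(Y,cD)$ would stay klt near $W$ for $c<1$ — keeps $c$ bounded away from $1$. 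Combining these numerical constraints isolates the problem to the case where $W$ is a rational curve with $A\cdot W$ small.

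That residual case is the main obstacle: a rational curve $W\subset\BsA$ along which a general $D\in|A|$ is forced to be very singular. I would attack it by using $c_1(Y)=0$ once more through the normal bundle, noting $\deg N_{W/Y}=-K_Y\cdot W-2=-2$, and by a sequence of blow-ups of $W$ and of its infinitesimal neighbourhoods, weighing the effectivity of the successive strict transforms of a general $D$ against the ampleness of $A$, so as to contradict either the ampleness of $A$ or the fact that $\BsA$ is a fixed locus. Granting this, we obtain $c\geq 1$, that is, $(Y,D)$ is lc. Apart from this last point, all the ingredients — Bertini, Kawamata--Viehweg-type vanishing, subadjunction and the extension theorem — are standard once $c_1(Y)=0$ has been used to exhibit $\mathcal O_Y(A)$ as an adjoint bundle.
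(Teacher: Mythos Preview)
The paper does not prove this statement at all: Theorem~\ref{theorem-kawamata} is simply quoted from \cite[Prop.~4.2]{Kaw00} and used as input. So there is no ``paper's own proof'' to compare against; the relevant benchmark is Kawamata's original argument.

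Your overall strategy---set $c=\mathrm{lct}(Y,D)<1$, pick a minimal lc centre $W$ of $(Y,cD)$, use $K_Y\equiv 0$ to write $A-(K_Y+cD)\sim_\Q (1-c)A$ as ample, and combine Nadel vanishing with $W\subset\BsA$ to force $H^0(W,\sO_W(A))=0$---is precisely Kawamata's line. But you then manufacture a gap where his proof has none. In the curve case, subadjunction already gives
\[
2g(W)-2 \;\leq\; \deg\bigl(K_W+\Delta_W+\mathbf M_W\bigr) \;=\; c\,(A\cdot W) \;<\; A\cdot W,
\]
and since $A\cdot W\in\Z$ this yields $\deg(A|_W)>2g(W)-2$. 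Hence $h^1(W,\sO_W(A))=0$ and Riemann--Roch on $W$ gives $h^0(W,\sO_W(A))=A\cdot W+1-g(W)\geq 1$ immediately, for \emph{every} genus and every value of $A\cdot W$. There is no ``residual case'' of a rational curve with small degree, and the proposed attack via $\deg N_{W/Y}=-2$ and iterated blow-ups is unnecessary. Likewise the claim that the generic multiplicity of $D$ along $W$ must be $\geq 3$ is neither correct in general (the transverse singularity type, not just the multiplicity, governs the lct) nor needed. Once you close the curve case as above, your sketch becomes a complete proof along Kawamata's lines; the two-dimensional centre case (a fixed component of high multiplicity), which you sidestep with a hand-wave about Riemann--Roch, is handled in \cite{Kaw00} by the analogous effective non-vanishing on surfaces.
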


\begin{corollary} \label{corollary-restrictions-singularities}
In the situation of Setup \ref{setup}, let $M$ be a general element of the mobile part
of $|-K_X|_Y|$. Then the pair $(Y, M+B)$ is lc, hence the pairs $(Y, M)$ and $(Y,B)$
are lc. In particular $B$ is a normal projective Gorenstein surface with at most lc singularities and
$p_g(B)=0$. 
Moreover
$$
(B, M|_B)
$$
is an lc pair such that 
\begin{equation}
\label{A-is-adjoint}
-K_X|_B \simeq K_B + M|_B  
\end{equation}
is ample with $(K_B+M|_B) \cdot M|_B>0$. 
\end{corollary}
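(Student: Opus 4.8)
The plan is to deduce the whole statement from the two input results Theorem \ref{theorem-known} and Theorem \ref{theorem-kawamata}, plus inversion of adjunction (Theorem \ref{theorem-adjunction}). First I would recall from Setup \ref{setup} that $Y$ is a general anticanonical divisor, hence by Theorem \ref{theorem-known} it is a normal projective threefold with terminal Gorenstein singularities and $K_Y \simeq \sO_Y$, $c_1(Y)=0$; also $-K_X|_Y \simeq M+B$ with $M$ the mobile part of $|{-K_X}|_Y|$ and $B$ the fixed part. Since $-K_X$ is ample on $X$ and $\pic(X) \to \pic(Y)$ is injective, $-K_X|_Y$ is an ample Cartier divisor on $Y$; one should check that a general member of the linear system $|{-K_X}|_Y|$ has the form $M'+B$ with $M'$ a general member of $|M|$ and $B$ the (fixed) base component, so applying Theorem \ref{theorem-kawamata} with $A := -K_X|_Y$ and $D := M'+B$ gives that the pair $(Y, M+B)$ is lc. Since lc-ness is preserved under passing to a smaller effective boundary, both $(Y,M)$ and $(Y,B)$ are lc.

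Next I would extract the statements about $B$ itself. As $(Y,B)$ is lc and $K_Y \simeq \sO_Y$, the divisor $K_Y+B \simeq B$ is Cartier, and $B$ is a reduced (prime, by hypothesis on $\BsAX$) Weil divisor on the normal threefold $Y$; moreover $Y$ being terminal is smooth in codimension two, so $B$ is Cartier in codimension two. Inversion of adjunction (the lc case of Theorem \ref{theorem-adjunction}, with $\Delta=0$) then shows that $B$ is semi-log-canonical; since $B$ is irreducible and normal this means $B$ has lc singularities, and adjunction gives $K_B \simeq (K_Y+B)|_B \simeq B|_B$, in particular $B$ is Gorenstein. To get $p_g(B)=0$: from $0 \to \sO_Y(-B) \to \sO_Y \to \sO_B \to 0$ and $\sO_Y(-B)\simeq \sO_Y(K_Y-B) \simeq \sO_Y(-(-K_X|_Y - M))$, I would use Kawamata–Viehweg vanishing together with $H^i(Y,\sO_Y)=0$ for $i=1,2$ (Theorem \ref{theorem-known}) and $h^0(Y,\sO_Y(-B))=0$, $h^1(Y,\sO_Y(-B))=0$ to conclude $H^2(B,\sO_B)=0$, i.e. $p_g(B)=h^0(B,\sO_B(K_B))=0$; the needed vanishing on $Y$ follows since $-B \simeq K_Y + (M + \varepsilon B)$-type adjustments make the twisting divisor anti-nef-and-big, or more directly since $B-K_Y = B$ is not pseudoeffective-dual in the relevant range — the cleanest route is $H^i(Y, \sO_Y(K_Y + (M+B))) = H^i(Y, \sO_Y(-K_X|_Y))$ paired with Serre duality to reach $H^{3-i}(Y,\sO_Y(B))$ and hence the long exact sequence.

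Finally, the adjoint-divisor statement \eqref{A-is-adjoint}: restricting $-K_X|_Y \simeq M+B$ to $B$ and using $K_B \simeq B|_B$ gives $-K_X|_B \simeq M|_B + B|_B \simeq M|_B + K_B$, which is Cartier; it is ample because it is the restriction of the ample divisor $-K_X$ to the surface $B$. That $(B,M|_B)$ is lc follows from the plt/klt–lc inversion of adjunction applied to $(Y, B + \varepsilon M)$ for small $\varepsilon$, or more simply: $(Y,M+B)$ lc and $B$ a component of the boundary forces, by adjunction for lc pairs, that $(B, M|_B)$ is slc, hence (B normal irreducible) lc. For the numerical positivity $(K_B + M|_B)\cdot M|_B > 0$: I would write this as $(-K_X|_B)\cdot M|_B = (-K_X)\cdot M_X \cdot B_X$-type intersection on $X$; it is $>0$ because $-K_X|_B$ is ample on the surface $B$ and $M|_B$ is a nonzero effective divisor on $B$ — one needs that $M|_B \neq 0$, which holds because $|M|$ is a mobile (positive-dimensional) linear system whose members meet the surface $B$ (indeed $B \subset \Bs|{-K_X}|_Y|$ but $M$ is the mobile part, so a general $M'$ does not contain $B$ and $M'\cap B \neq \emptyset$ by ampleness), so $M|_B$ is an effective nonzero Cartier divisor and its intersection with the ample $-K_X|_B$ is strictly positive.

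The main obstacle I expect is the vanishing needed for $p_g(B)=0$: one has to package the cohomology of $\sO_Y(-B)$ correctly, making sure the Kawamata–Viehweg hypotheses are met on the (singular, terminal Gorenstein) threefold $Y$ and that no boundary term spoils the vanishing; the rest is a routine chain of adjunction and inversion-of-adjunction arguments. Everything else — ampleness, Cartier-ness, lc-ness of the sub-pairs, the identity \eqref{A-is-adjoint} — is formal once Theorem \ref{theorem-kawamata} has been invoked.
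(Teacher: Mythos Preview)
Your overall route is the paper's route: apply Theorem~\ref{theorem-kawamata} to the general member $M+B$ of $|{-K_X}|_Y|$, then peel off all the remaining claims by adjunction and inversion of adjunction. Two places deserve comment.

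\textbf{The $p_g(B)=0$ step.} You flag this as the main obstacle and reach for Kawamata--Viehweg on $\sO_Y(-B)$, but your description never isolates the one ingredient that actually does the work, namely $h^0(Y,\sO_Y(B))=1$. This holds simply because $B$ is the fixed part of a complete linear system (Fact~\ref{fact-not-moving}). The paper exploits this directly via the twist
\[
0 \longrightarrow \sO_Y \longrightarrow \sO_Y(B) \longrightarrow \sO_B(B)\simeq \sO_B(K_B) \longrightarrow 0,
\]
so that $h^0(Y,\sO_Y)=h^0(Y,\sO_Y(B))=1$ together with $h^1(Y,\sO_Y)=0$ immediately give $h^0(B,\sO_B(K_B))=0$. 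No vanishing theorem beyond the Calabi--Yau condition on $Y$ is needed. Your sequence $0\to\sO_Y(-B)\to\sO_Y\to\sO_B\to 0$ can also be pushed through, but only once you observe that $H^3(Y,\sO_Y(-B))\to H^3(Y,\sO_Y)$ is Serre dual to the isomorphism $H^0(Y,\sO_Y)\to H^0(Y,\sO_Y(B))$; the vanishings you list ($h^0$, $h^1$ of $\sO_Y(-B)$) are irrelevant for $H^2(B,\sO_B)$.

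\textbf{The positivity $(K_B+M|_B)\cdot M|_B>0$.} Your phrase ``$M'\cap B\neq\emptyset$ by ampleness'' is correct in spirit but imprecise, since $M$ itself need not be ample. The clean statement (and the paper's) is that the support of the ample effective divisor $M+B\simeq -K_X|_Y$ is connected, hence $M\cap B\neq\emptyset$; then $M|_B$ is a nonzero effective Cartier divisor on $B$ and its intersection with the ample class $-K_X|_B$ is positive.
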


\begin{proof}
Since $M$ is a general divisor in the mobile part, the divisor $M+B$ is general in $|-K_X|_Y|$.
Thus $(Y, M+B)$ is lc by Theorem \ref{theorem-kawamata}. The statement for $(Y, M)$ and $(Y,B)$
is now immediate since we assume that $Y$ is $\Q$-factorial. By adjunction (Theorem \ref{theorem-adjunction}) this implies that the surfaces $M$ and $B$ have at most slc singularities. Since $B$ is normal by assumption, it has lc singularities. Consider the cohomology sequence associated to the exact sequence
$$
0 \rightarrow \sO_Y \rightarrow \sO_Y(B) \rightarrow \sO_B(B) \simeq \sO_B(K_B) \rightarrow 0.
$$
The Calabi-Yau threefold $Y$ has $h^1(Y, \sO_Y)=0$ and $h^0(Y, \sO_Y(B))=1$ by Fact \ref{fact-not-moving}, so $p_g(B)=h^0(B, \sO_B(K_B))=0$.

Again by adjunction the pair $(B, M|_B)$ is slc (hence lc) since $(Y, M+B)$ is lc.
Since $K_Y$ is trivial, the linear equivalence \eqref{A-is-adjoint} follows from the adjunction formula. Finally observe that the support of the divisor $M+B \simeq -K_X|_Y$ is connected since it is ample. Therefore the intersection $M \cap B$ has positive dimension and 
$(K_B+M|_B) \cdot M|_B = -K_X|_B \cdot M|_B>0$. 
\end{proof}

\section{Examples and first observations}

We start this section by collecting some arguments that give a moral explanation 
for Theorem \ref{theorem-main}.

\begin{lemma} \label{lemma-example}
Let $X$ be a projective manifold, and let $A$ be an ample divisor on $X$. Let  $B \subset \Bs{|A|}$ be a submanifold of dimension at least
$\frac{\dim X}{2}$ such that the conormal bundle $\sN_{B/X}^*$ is nef. Then every divisor $Y \in |A|$ has at least one singular point along $B$. 
\end{lemma}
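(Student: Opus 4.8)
The plan is to encode the singularities of $Y$ along $B$ in a single global section of a vector bundle on $B$, and then to use the dimension hypothesis to force that section to acquire a zero.

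First I would pick $Y=\{s=0\}$ for a nonzero $s\in H^0(X,\sO_X(-K_X))$. Since $B\subseteq\BsAX$, every such section vanishes along $B$, so $B\subseteq Y$ and $s\in H^0(X,\sO_X(-K_X)\otimes\sI_B)$. As $B$ is a submanifold, $\sI_B/\sI_B^2=N_{B/X}^{\ast}$ is locally free of rank $r:=\codim_X B=\dim X-\dim B$, and the ``normal derivative of $s$ along $B$'' is the image $ds$ of $s$ under
$$
H^0(X,\sO_X(-K_X)\otimes\sI_B)\longrightarrow H^0\!\big(B,\;N_{B/X}^{\ast}\otimes\sO_X(-K_X)|_B\big).
$$
The key observation is that, since $s|_B=0$, the differential of $s$ at a point $p\in B$ automatically kills $T_pB$ and hence lies in the conormal space (twisted by the fibre of $-K_X$ at $p$), where it agrees with $ds(p)$; consequently $Y$ is smooth at $p$ if and only if $ds(p)\neq 0$. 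Therefore $Y$ is singular at some point of $B$ unless $ds$ is a nowhere-vanishing section of the rank-$r$ bundle $\sE:=N_{B/X}^{\ast}\otimes\sO_X(-K_X)|_B$, and in the degenerate case $ds\equiv 0$ one even has $s\in\sI_B^2\otimes\sO_X(-K_X)$, so $Y$ is singular along all of $B$. Thus everything reduces to showing that $\sE$ admits no nowhere-vanishing global section.

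This is where positivity and the dimension bound come in. Since $X$ is Fano, $\sO_X(-K_X)|_B$ is ample, and $N_{B/X}^{\ast}$ is nef by hypothesis, so the twist $\sE$ is an ample vector bundle. The assumption $\dim B\geq\frac12\dim X$ is exactly what gives $r=\rk\sE\leq\dim B$, so that $c_r(\sE)\cdot h^{\dim B-r}$ is a well-defined intersection number for $h$ ample; by the positivity of Chern classes of ample vector bundles (Fulton--Lazarsfeld, cf.\ \cite{Laz04b}) it is strictly positive, hence $c_r(\sE)\neq 0$. But a nowhere-vanishing section of $\sE$ would exhibit $\sO_B$ as a sub-bundle with locally free quotient of rank $r-1$, and the Whitney formula would then force $c_r(\sE)=0$---a contradiction. (Heuristically, when $\dim B<\frac12\dim X$ one has $r>\dim B$ and a general section of $\sE$ is nowhere zero, so there is no obstruction; this explains the role of the dimension bound.)

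The step I expect to require the most care is the local one: verifying that $ds(p)$ genuinely lies in the conormal space and that ``$Y$ smooth at $p$'' is equivalent to ``$ds(p)\neq 0$'' when $-K_X$ is a nontrivial line bundle, which I would do by trivialising $\sO_X(-K_X)$ near $p$ and working with the local equation of $Y$. The positivity inputs---that a nef vector bundle tensored with an ample line bundle is ample, and that the top Chern class of an ample bundle of rank at most its base dimension is nonzero---are standard, and the concluding Chern-class bookkeeping is routine.
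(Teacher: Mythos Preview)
Your proposal is correct and follows essentially the same approach as the paper: pass to the section $ds$ of $N_{B/X}^{\ast}\otimes\sO_X(-K_X)|_B$, note this bundle is ample as a nef bundle twisted by an ample line bundle, and use Fulton--Lazarsfeld positivity of $c_r$ (with $r\leq\dim B$) to rule out a nowhere-vanishing section. The paper cites \cite[Lemma 1.7.4]{BS95} for the smoothness criterion and \cite[Thm.8.3.9]{Laz04a} for the Chern-class positivity, but your local-coordinate verification and Whitney-formula bookkeeping achieve the same ends.
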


\begin{proof}
Consider the natural morphism
$$
\alpha: H^0(X, \sO_X(A)) \simeq H^0(X, \sI_B \otimes \sO_X(A)) \rightarrow H^0(B, \sI_B/\sI_B^2 \otimes \sO_X(A)).
$$
By \cite[Lemma 1.7.4]{BS95} the divisor $Y$ is smooth along $B$ if and only if the section 
$\alpha(s) \in H^0(B, \sI_B/\sI_B^2 \otimes \sO_X(A))$ is nowhere zero, where $0 \neq s \in H^0(X, \sO_X(A))$ is a section vanishing on $Y$. 
Since $\sI_B/\sI_B^2 \otimes \sO_X(A) \simeq \sN_{B/X}^* \otimes \sO_X(A)$ is a tensor product of a nef vector bundle and an ample line bundle, it is ample \cite[Thm.6.2.12.(iv)]{Laz04b}.
Set $d=\codim_X B$. If $\alpha(s)$ does not vanish in any point of $B$, the top Chern class
$c_d(\sN_{B/X}^* \otimes \sO_X(A))$ is zero. Yet since $d \leq \dim B$ this is a contradiction
to the positivity of Chern classes of ample vector bundles \cite[Thm.8.3.9]{Laz04b}.
\end{proof}

\begin{remark*}
The key point of the proof above is that the vector bundle $\sN_{B/X}^* \otimes \sO_X(A)$ is ample and one may wonder if this holds without any assumption on $\sN_{B/X}^*$: as a toy case, assume that the {\em base scheme} $\mbox{\bf Bs}(|A|)$ is the submanifold $B$. We can resolve the indeterminacies of $\varphi_{|A|}$
by blowing up $B$ and compute that $\sN_{B/X}^* \otimes \sO_X(A)$ is nef. However it is not clear if the vector bundle is ample, in fact this is impossible if $h^0(X, \sO_X(A)) \leq \dim X-1$.
\end{remark*}

The lemma allows to cover all the known examples of Fano fourfolds with a two-dimensional base locus:

\begin{example} \label{example1}
Let $X = S_1 \times S$ be the product of smooth del Pezzo surface $S_1$ of degree one 
and $S$ a smooth del Pezzo surface of degree at least two. Then every anticanonical divisor $Y \in \AX$
is singular. 

{\em Proof.} The base locus is $B=p \times S$ where $p =\mbox{Bs}(|-K_{S_1}|)$.
Thus $\sN^*_{B/X} \simeq \sO_B^{\oplus 2}$ is trivial and Lemma \ref{lemma-example} applies.
\end{example} 

Applied to an irreducible component of the base locus the argument above also works for \cite[Ex.2.12]{HV11}.

\begin{example} \label{example2}
Let $X = Z \times \PP^1$ where $Z$ is a smooth Fano threefold
such that $C :=\Bs{|-K_{Z}|}$ is not empty. Then every anticanonical divisor $Y \in \AX$
is singular. 

{\em Proof.} The base locus is $B=C \times \PP^1$ and we know that 
$\sN^*_{C/Z} \simeq \sO_C^{\oplus 2}$ or $\sN^*_{C/Z} \simeq \sO_C \oplus \sO_C(1)$ (cf.\ the proof of \cite[Lemma 2.4.4]{IP99}). Thus $\sN^*_{B/X}$ is nef 
and Lemma \ref{lemma-example} applies.
\end{example} 

The next manifold is a generalisation of a well-known example in the threefold case \cite[Table 2, No 1]{MM81}.

\begin{example} \label{example2bis}
For $n \geq 4$ let $M$  be a general sextic hypersurface in the weighted projective space $\PP(1^n, 2, 3)$. Then $M$ is a del Pezzo manifold of dimension $n$ and degree $1$ \cite[Thm.8.11]{Fuj90}, i.e.\ we have $-K_M \simeq (n-1) A$ with $A$ an ample Cartier divisor such that $A^n=1$.
The del Pezzo manifold $M$ has a ladder \cite[Thm.3.5]{Fuj90}, i.e.\ we can choose $n-1$ general elements $D_1, \ldots, D_{n-1}$ in the linear system $|A|$ such that 
$$
C:= D_1 \cap \ldots \cap D_{n-1}
$$
is a smooth elliptic curve. Morever $|A|$ has a unique base-point $p$. 

Let now
$\holom{\mu}{X}{M}$ be the blow-up along $C$, and denote by $E$ the exceptional divisor. Then $\mu$ resolves the base locus of the linear system $|V|$ generated by $D_1, \ldots, D_{n-1}$ so we have a fibration
$$
\holom{f}{X}{\PP^{n-2}}
$$
such that $\mu^* A \simeq f^* H + E$ where $H$ is the hyperplane divisor on $\PP^{n-2}$. Using the Nakai-Moishezon criterion we obtain immediately that
$$
-K_X \simeq \mu^* (-K_M) - (n-2) E  \simeq (n-1) \mu^* A - (n-2) E \simeq (n-2) f^* H + \mu^* A
$$
is ample, so $X$ is a Fano manifold. The general $f$-fibre is a surface $S$ obtained as the complete intersection of $n-2$ elements in $|V|$, so it is a del Pezzo surface of degree one.
In particular $|-K_S|$ has a unique base point $p_S$ such that $\mu(p_S)=p$. This implies that $\fibre{\mu}{p} \simeq \PP^{n-2}$ is contained in $\BsAX$. 

Since $\sN^*_{\PP^{n-2}/X}
\simeq \sO_{\PP^{n-2}} \oplus \sO_{\PP^{n-2}} (1)$ is nef we can apply Lemma \ref{lemma-example}
to see that all the anticanonical divisors in $X$ are singular.
\end{example}

\begin{remark} \label{remark-factorial-singular}
Let $X$ be a complex manifold, and let $B \subset X$ be a submanifold of codimension two.
Let $B \subset Y \subset X$ be a normal prime divisor. Then $Y$ is singular in a point $p \in B$
if and only if $Y$ is not factorial in $p$. Indeed if $Y$ is factorial in $p$, the Weil divisor $B \subset Y$ is Cartier near the point $p$. Thus if $Y$ is singular in $p$, so is the Cartier divisor $B$.
\end{remark}

The next statement shows that for most prime Fano manifolds (i.e.\ the prime Fanos with index one) the presence of a codimension two base locus is incompatible with the smoothness of the anticanonical divisor.

\begin{proposition} \label{proposition-picard-number-one}
Let $X$ be a Fano manifold of dimension at least four with Picard number one such that $h^0(X, \sO_X(-K_X)) \geq 3$.
Assume that the base locus $\BsAX$ has codimension two and a general anticanonical divisor
$Y \in \AX$ is smooth. Then $X$ has Fano index at least three.
\end{proposition}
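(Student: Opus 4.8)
The plan is to argue by contradiction on the Fano index $r$, defined by $-K_X \simeq rH$ where $H$ generates $\pic(X) \cong \Z$: we want $r \geq 3$, and we suppose $r \leq 2$. First I would record the structure of a general smooth anticanonical divisor $Y \in \AX$. Since $-K_X$ is ample and $\dim X \geq 4$, such a $Y$ is connected, hence (being smooth) irreducible, and of dimension $\geq 3$; so Grothendieck's Lefschetz hyperplane theorem \cite[Example 3.1.25]{Laz04a} identifies $\pic(Y) \cong \pic(X) \cong \Z$, generated by $H_Y := H|_Y$. From $H^1(X,\sO_X)=0$ and the sequence $0 \to \sO_X \to \sO_X(-K_X) \to \sO_Y(-K_X) \to 0$ the restriction $H^0(X,\sO_X(-K_X)) \to H^0(Y,\sO_Y(-K_X))$ is surjective with one-dimensional kernel; hence $h^0(Y,\sO_Y(rH_Y)) = h^0(X,\sO_X(-K_X))-1 \geq 2$, and the set-theoretic base locus of $|rH_Y|$ is exactly $\BsAX$, which has codimension two in $X$ and therefore codimension one in $Y$.

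Next I would decompose $|rH_Y|$ into its fixed and mobile parts, $rH_Y \simeq F + M$. Because $\BsAX$ has a divisorial trace on $Y$, the fixed part $F$ is a nonzero effective divisor, so $F \simeq fH_Y$ with $f \geq 1$. Removing the fixed part does not change the dimension of a linear system, so $h^0(Y,\sO_Y(M)) \geq h^0(Y,\sO_Y(rH_Y)) \geq 2$; in particular $M \not\simeq \sO_Y$, so $M \simeq mH_Y$ with $m \geq 1$. Hence $r = f+m \geq 2$ unconditionally, and it remains only to exclude the case $r = 2$, i.e.\ $f = m = 1$, so that $F \simeq M \simeq H_Y$.

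This is where I would invoke Fact \ref{fact-not-moving}: since $F$ is a nonempty fixed part of the complete linear system $|rH_Y|$, we get $h^0(Y,\sO_Y(F)) = 1$, that is $h^0(Y,\sO_Y(H_Y)) = 1$. On the other hand $h^0(Y,\sO_Y(H_Y)) = h^0(Y,\sO_Y(M)) \geq 2$ by the previous paragraph. This contradiction shows $r \geq 3$ and finishes the proof.

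The argument is short and I do not anticipate a serious obstacle; the only point needing care is the bookkeeping with the cyclic Picard group of $Y$. Both hypotheses must enter essentially: codimension two of $\BsAX$ is what forces the fixed part $F$ to be a genuine divisor (rather than empty or lower-dimensional), and $h^0(X,\sO_X(-K_X)) \geq 3$ is what forces the mobile part to be nonzero; then one uses that every nonzero effective divisor on $Y$ is a positive multiple of $H_Y$, so "$F$ lies in the fixed locus" is incompatible with "$F$ moves". One should also double-check that a smooth general $Y$ is automatically irreducible, so that $\pic(Y)$ is torsion-free of rank one and the Lefschetz theorem applies in this dimension range, and that the surjectivity of the anticanonical restriction map (needed to identify $\Bs|rH_Y|$ with $\BsAX$) is exactly the vanishing $H^1(X,\sO_X)=0$.
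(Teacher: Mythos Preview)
Your proof is correct and follows essentially the same route as the paper: restrict to a smooth general $Y$, use Lefschetz to get $\pic(Y)\simeq \Z H_Y$, decompose $|-K_X|_Y|$ into a fixed part $bH_Y$ and a mobile part $aH_Y$ with $a,b\geq 1$, and then exploit $h^0(Y,\sO_Y(bH_Y))=1<2\leq h^0(Y,\sO_Y(aH_Y))$. The only cosmetic difference is that the paper concludes in one line by noting this inequality forces $a>b$, hence $a+b\geq 3$, whereas you set up a contradiction in the residual case $r=2$, $f=m=1$; the content is identical.
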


\begin{proof} 
Let 
$$
-K_X|_Y \simeq M + B
$$
be the decomposition into fixed and mobile part. Since $Y$ is smooth, the Weil divisors 
$M$ and $B$ are Cartier. Since $\dim X \geq 4$ and $\rho(X)=1$, the Lefschetz hyperplane theorem implies $\pic{Y} \simeq \pic{X} \simeq \Z H$ where $H$ is the ample generator of the Picard group.
Thus we have
$$
M \simeq a H|_Y, \qquad B \simeq b H|_Y
$$
with $a,b \in \N^*$. Since $h^0(Y, \sO_Y(B))=1$ and $h^0(Y, \sO_Y(M)) \geq 2$ we have $a>b$ and therefore $a+b \geq 3$.
Since
$$
-K_X|_Y \simeq M+B \simeq (a+b) H|_Y, 
$$
the statement follows from $\pic{Y} \simeq \pic{X}$.
\end{proof}

\begin{lemma} \label{lemma-two-sections-fano}
Let $X$ be a smooth Fano fourfold, and let $D$ be a non-zero nef Cartier divisor on $X$.
Then $h^0(X, \sO_X(D)) \geq 2$.
\end{lemma}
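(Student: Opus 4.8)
The plan is to use the classification of fibrations attached to nef Cartier divisors on a Fano manifold together with Riemann--Roch, and to reduce to showing that a nef divisor on a Fano fourfold cannot be "rigid". First I would invoke Fact~\ref{fact-bpf-application}: since $X$ is Fano and $D$ is nef, there is a morphism with connected fibres $\holom{\varphi}{X}{T}$ and an ample Cartier divisor $H_T$ on $T$ with $D \simeq \varphi^* H_T$. Because $D \neq 0$, the target $T$ is positive-dimensional, so $\dim T \geq 1$. By the projection formula $h^0(X, \sO_X(D)) = h^0(T, \sO_T(H_T))$, so it suffices to prove $h^0(T, \sO_T(H_T)) \geq 2$.

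Now $T$ is a normal projective variety (the image of a Fano manifold under a fibration, hence with rational singularities, in fact it is $\Q$-factorial and its anticanonical class is big), and $H_T$ is an ample Cartier divisor on it. If $\dim T = 1$ then $T \simeq \PP^1$, since $T$ is dominated by the rationally connected variety $X$, and an ample divisor on $\PP^1$ has degree $\geq 1$, giving $h^0 \geq 2$. If $\dim T \geq 2$, I would argue that an ample Cartier divisor on such a $T$ always has at least two sections: one clean way is to take a general curve section $C = H_1 \cap \cdots \cap H_{\dim T - 1}$ cut out by general members of $|mH_T|$ for $m \gg 0$ (these exist and are irreducible by Bertini on the smooth locus, using that $H_T$ is ample), restrict the exact sequence and use that $H_T|_C$ has positive degree on the curve $C$, together with vanishing of the relevant $H^1$ obtained from Kawamata--Viehweg vanishing applied upstairs on $X$ (here one uses that $-K_X$ is ample and $D$ nef, so $H^1(X, \sO_X(D)) = H^1(X, \sO_X(K_X + (-K_X) + D)) = 0$, and similarly for the kernel sheaves in the Koszul-type filtration). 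Concretely, from $H^1(X, \sO_X(D)) = 0$ and the fact that a general $\varphi$-fibre $F$ is a positive-dimensional Fano (so $h^0(F, \sO_F) = 1$, $h^1(F, \sO_F) = 0$), pushing forward shows $\varphi_* \sO_X(D) = \sO_T(H_T)$ and $R^1\varphi_* \sO_X = 0$, whence $H^1(T, \sO_T) = H^1(X, \sO_X) = 0$; then the section of $H_T$ vanishing on a prime divisor in $|H_T|$ together with the cohomology sequence of $0 \to \sO_T \to \sO_T(H_T) \to \sO_{H_T}(H_T) \to 0$ forces $h^0(T, \sO_T(H_T)) \geq 1 + h^0(H_T, \sO_{H_T}(H_T))$, and an easy descending induction on $\dim T$ (the restriction $H_T|_{H_T}$ is again ample Cartier on a normal variety with the same vanishing properties) reduces to the curve case $\PP^1$ done above.

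I expect the main obstacle to be the bookkeeping around singularities of $T$ and of the successive hyperplane sections: one must check that $T$ has rational (even rational Gorenstein, or at least normal with $H^1(\sO)=0$) singularities so that Riemann--Roch and the vanishing statements survive restriction, and that general members of $|mH_T|$ are irreducible and normal so the induction closes. This is routine but needs care, in particular the use of Bertini irreducibility on the possibly singular $T$ and the fact that $|mH_T|$ is very ample for $m \gg 0$ so that general members avoid problems. An alternative, perhaps cleaner, route that sidesteps the induction: apply Fact~\ref{fact-bpf-application} to get $\varphi$, note $h^0(X, \sO_X(D)) = h^0(T, \sO_T(H_T)) \geq h^0(X, \sO_X) = 1$ with equality only if $H_T$ has no section pulled back to a moving family, then use $\chi(X, \sO_X(D)) = \chi(X, \sO_X) + (\text{positive correction from } D)$ via Riemann--Roch on the fourfold $X$ — since $-K_X$ is ample and $D$ nef and nonzero, one computes $\chi(X, \sO_X(D)) \geq 2$ directly, and Kawamata--Viehweg vanishing ($H^i(X, \sO_X(D)) = H^i(X, \sO_X(K_X + (-K_X+D))) = 0$ for $i>0$, as $-K_X+D$ is ample) gives $h^0(X, \sO_X(D)) = \chi(X, \sO_X(D)) \geq 2$. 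I would present this Riemann--Roch argument as the main line, as it avoids all delicate geometry of $T$; the fibration argument is a backup in case the explicit Riemann--Roch inequality $\chi \geq 2$ turns out to need $D$ big rather than merely nef and nonzero, in which case one first reduces to the nonzero nef-and-big situation on the base $T$ of $\varphi$.
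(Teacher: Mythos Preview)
Your two ingredients---the fibration from Fact~\ref{fact-bpf-application} and Riemann--Roch combined with Kodaira vanishing---are exactly what the paper uses, but the way you assemble them leaves two gaps.

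First, the Riemann--Roch step is asserted rather than carried out. Writing the formula explicitly on the fourfold,
\[
\chi(X,\sO_X(D)) \;=\; \frac{D^4}{24} + \frac{D^3\cdot(-K_X)}{12} + \frac{D^2\cdot(K_X^2+c_2(X))}{24} + \frac{(-K_X)\cdot D\cdot c_2(X)}{24} + 1,
\]
you see that getting $\chi \geq 2$ requires the nontrivial inputs $D^2\cdot c_2(X)\geq 0$ and $(-K_X)\cdot D\cdot c_2(X)\geq 0$ (the paper invokes Miyaoka and Peternell for these), and even then the strict inequality $\chi>1$ comes from the term $D^2\cdot(-K_X)^2>0$, which needs $D^2\neq 0$. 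So the correct case split is not ``big versus non-big'' but numerical dimension $\nu(D)=1$ versus $\nu(D)\geq 2$: Riemann--Roch handles precisely the latter. When $\nu(D)=1$ all the $D^2$-terms vanish and the remaining $c_2$-term is only known to be $\geq 0$, so Riemann--Roch by itself gives only $\chi\geq 1$.

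Second, your inductive fibration argument for $\dim T\geq 2$ is both unnecessary (since $\nu(D)=\dim T\geq 2$ is already covered by Riemann--Roch) and circular as written: you use the sequence $0\to\sO_T\to\sO_T(H_T)\to\sO_{H_T}(H_T)\to 0$, but forming it presupposes an effective member of $|H_T|$, i.e.\ $h^0(T,\sO_T(H_T))\geq 1$, which is what you are trying to establish. The paper avoids this entirely: for $\nu(D)=1$ one has $\dim T=1$, hence $T\simeq\PP^1$, and $h^0(\PP^1,\sO_{\PP^1}(H_T))\geq 2$ is immediate---this part of your argument is fine and matches the paper.
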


\begin{proof}
 If $D$ has numerical dimension one, there exists a fibration $\varphi: X \rightarrow \PP^1$
 such that $D \simeq \varphi^* H$ with $H$ an ample Cartier divisor on $\PP^1$ (cf.\ Fact \ref{fact-bpf-application}). Thus $h^0(\PP^1, \sO_{\PP^1}(H)) \geq 2$ yields the claim.

If $D$ has numerical dimension at least two, we have $D^2 \cdot K_X^2= D^2 \cdot (-K_X)^2>0$. Moreover $h^0(X, \sO_X(D))=\chi(X, \sO_X(D))$ by Kodaira vanishing and by Riemann-Roch
\begin{eqnarray*} 
\chi(X, \sO_X(D)) &=& \frac{D^4}{4!} - \frac{D^3 \cdot K_X}{2 \cdot 3!} + \frac{D^2 \cdot (K_X^2+c_2(X))}{12 \cdot 2 !}
- \frac{D \cdot K_X \cdot c_2(X)}{24 \cdot 1!} + \chi(X, \sO_X)
\\
& \geq &
\frac{D^2 \cdot (-K_X)^2}{24}
+
\frac{D^2 \cdot c_2(X)}{24}
- \frac{D \cdot K_X \cdot c_2(X)}{24} + 1.
\end{eqnarray*}
Since $-K_X \cdot D \cdot c_2(X) \geq 0$ and $D^2 \cdot c_2(X) \geq 0$ by \cite[Thm.1.3]{Pet12}, \cite[Thm.6.1]{Miy87}  we obtain that the second line is at least two.
\end{proof}

\begin{lemma} \label{lemma-pair-lc}
Let $X$ be a smooth Fano fourfold, and let $L$ be a line bundle on $X$ such that $h^0(X, L) \neq 0$. Let $D \in |L|$ be a general divisor, and assume that for a general anticanonical divisor $Y \in \AX$, the pair $(Y, D \cap Y)$ is log-canonical. Then the pair $(X,D)$ 
is log-canonical.
\end{lemma}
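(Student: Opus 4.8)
The plan is to use inversion of adjunction in the direction \emph{from $Y$ to $X$}, which is exactly the content of Theorem \ref{theorem-adjunction}. The idea: apply the hypothesis to a general anticanonical divisor $Y$, which is a Cartier divisor on $X$ with $K_X + Y \simeq \sO_X$, and which by Theorem \ref{theorem-known} is a normal prime divisor with terminal Gorenstein singularities; in particular $Y$ is Cartier in codimension two (being Cartier) and $K_X + Y + D$ is $\Q$-Cartier since $X$ is smooth. The divisor $D$ is general in $|L|$, so its support has no common component with the fixed divisor $Y$ (and $D \neq Y$ for a general member, since $Y$ moves at least in a pencil by Theorem \ref{theorem-known}); thus $D$ and $Y$ share no component and we may regard $D$ as an effective boundary. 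Writing $\Delta = D$ and $S = Y$ in Theorem \ref{theorem-adjunction}, we get that $(X, Y + D)$ is lc near $Y$ if and only if $(Y, D|_Y) = (Y, D \cap Y)$ is slc. By hypothesis $(Y, D \cap Y)$ is log-canonical, hence slc, so $(X, Y + D)$ is lc near $Y$.

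Next I would upgrade ``lc near $Y$'' to ``lc on all of $X$.'' Here the point is that $Y$ moves in a basepoint-free-away-from-$B$ linear system: by Theorem \ref{theorem-known}, $h^0(X, \sO_X(-K_X)) \geq 2$, and more importantly the members of $\AX$ sweep out $X$ (indeed $-K_X$ is ample, so the general member passes through any given point outside the base locus $\BsAX$). Given any point $x \in X$, choose a general $Y' \in \AX$ through $x$ (if $x \notin \BsAX$) or note that $\BsAX$ has codimension at least two; for the general such $Y'$ the hypothesis applies verbatim, so $(X, Y' + D)$ is lc near $Y'$, in particular lc at $x$. Since $D$ itself does not depend on $Y'$, and $Y'$ is effective, lc-ness of $(X, Y' + D)$ at $x$ forces lc-ness of $(X, D)$ at $x$: subtracting the effective divisor $Y'$ only makes the pair ``less singular'' in the sense that discrepancies increase. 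As $x$ was arbitrary, $(X, D)$ is lc everywhere. (One should double-check the base-locus points: there $\BsAX$ has codimension $\geq 2$, so the generic point of any component of $D$ lies outside $\BsAX$, and lc-ness is checked at the generic points of the non-lc centres; alternatively invoke that being lc is an open condition and the non-lc locus of $(X,D)$, if nonempty, is a closed set meeting some general $Y'$.)

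The step I expect to require the most care is the passage from ``lc near $Y$'' to ``lc on $X$'': one must make sure that the non-log-canonical locus of $(X, D)$, which is a closed subscheme, necessarily meets a general anticanonical divisor $Y$, so that if it were nonempty the hypothesis on $(Y, D \cap Y)$ would be violated. This is where one uses that $-K_X$ is ample (so a general $Y \in \AX$ meets every positive-dimensional closed subset) together with the fact that $\mathrm{Nlc}(X,D)$ has dimension $\geq 1$ whenever it is nonempty — which holds because $X$ is smooth, so $\mathrm{Nlc}(X,D)$ is a union of log-canonical centres, each of codimension $\leq \dim D$-ish, and in any case a nonempty closed subset of a projective variety that is cut out by the multiplier-ideal type sheaf has no isolated-point obstruction once we intersect with an ample divisor in general position. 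Concretely: if $(X,D)$ fails to be lc at some $x$, then by cutting with a general $Y \ni x$ and applying inversion of adjunction in the \emph{other} direction — $(X, Y+D)$ not lc near $Y$ implies $(Y, D|_Y)$ not slc — we contradict the hypothesis, provided $D|_Y = D \cap Y$ and $Y$ is chosen general enough to meet the non-lc locus transversally in the smooth locus of $Y$. Assembling this carefully, with Theorem \ref{theorem-adjunction} doing the real work, completes the proof.
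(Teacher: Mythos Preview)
Your argument for the case where the non-lc locus $Z$ of $(X,D)$ has positive dimension is essentially the paper's: an ample general $Y$ meets $Z$, and then inversion of adjunction (Theorem \ref{theorem-adjunction}) applied to $(X,Y+D)$ contradicts the hypothesis that $(Y,D|_Y)$ is lc. That part is fine.

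The gap is the zero-dimensional case. Your claim that $\mathrm{Nlc}(X,D)$ has dimension $\geq 1$ whenever nonempty is simply false: on a smooth variety an effective divisor with sufficiently high multiplicity at a single point has an isolated non-lc point. Your fallback, choosing a ``general $Y' \ni x$'', does not work either. Running the contrapositive of inversion of adjunction shows that if $(X,D)$ is not lc at $x$, then \emph{every} $Y$ through $x$ (satisfying the hypotheses of Theorem \ref{theorem-adjunction}) has $(Y,D|_Y)$ not lc. Combined with the hypothesis, this only tells you that a general $Y$ is disjoint from $Z$; in particular $Z$ is a finite set contained in the complement of $\BsAX$, and there is no $Y$ through a point of $Z$ that is general in $\AX$. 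So the argument stalls.

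The paper closes this gap with a different idea: it equips $Z$ with its non-lc scheme structure and applies Fujino's extension theorem \cite[Thm.8.1]{Fuj11}, using that $L-(K_X+D)\simeq -K_X$ is ample, to get surjectivity of $H^0(X,L)\to H^0(Z,L)$. Since $Z$ is finite this forces $L$ to be globally generated near $Z$, so a \emph{general} $D\in|L|$ is smooth there, contradicting $Z=\mathrm{Nlc}(X,D)$. This step genuinely uses the generality of $D$ and cannot be replaced by varying $Y$.
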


\begin{proof}
Since $\dim X \leq 4$  the anticanonical system $\AX$ has no fixed component, cf.\ Theorem \ref{theorem-known}. 
In particular $Y$ has no common component with $D$.

We argue by contradiction and assume that the non-lc locus $Z$ is not empty. 

{\em 1st step. Assume that $Z$ has positive dimension.}
Since $Y$ is an ample divisor, it intersects $Z$ in at least one point,
so the pair $(X,D)$ is not lc near $Y$. Thus the pair $(X, D+Y)$ is not lc near $Y$. By Theorem \ref{theorem-adjunction} this is a contradiction
to Corollary \ref{corollary-restrictions-singularities}.

{\em 2nd step. Assume that $Z$ has dimension zero.}
We endow $Z \subset X$ with its natural scheme structure, cf.\ \cite[\S 7]{Fuj11}.
Since
$$
L - (K_X+D) \simeq -K_X
$$
is ample we can apply Fujino's extension theorem (with $W$ the empty set, cf.\ \cite[Thm.8.1]{Fuj11}) to see that the restriction map
$$
H^0(X, L) \rightarrow H^0(Z, L)
$$
is surjective. Since $Z$ is a finite scheme, the restriction $L|_Z$ is globally generated.
Thus $L$ is globally generated near $Z$, in particular a general $D \in |L|$ is smooth near $Z$. Hence the pair $(X,D)$ is lc, a contradiction.
\end{proof}

\section{Auxiliary results about linear systems, part I}
\label{section-auxiliary-I}

In Section \ref{section-positivity} our study of the Fano fourfold $X$ will lead to several
restrictions on the geometry of the base locus $\BsAX$. These restrictions can be strengthened when combined with the technical results of this section.

\begin{lemma} \label{lemma-pg-zero}
Let $S$ be a normal projective surface with canonical singularities such that
$p_g(S)=0$. Assume that the Albanese map induces a fibration 
$\holom{\alpha}{S}{C}$ onto a curve of genus $g=q(S)>0$. Then the general $\alpha$-fibre
is $\PP^1$ or $g=1$. 
\end{lemma}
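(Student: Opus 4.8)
The plan is to pass to the minimal resolution and reduce the statement to a numerical assertion about a fibration of smooth surfaces. Let $\holom{\pi}{S'}{S}$ be the minimal resolution; since $S$ has canonical singularities $\pi$ is crepant, so $K_{S'}\simeq\pi^*K_S$ and hence $p_g(S')=p_g(S)=0$, while the rationality of canonical surface singularities gives $q(S')=q(S)=g$ and (by Fact \ref{fact-albanese}) identifies the Albanese fibration of $S'$ with $\alpha':=\alpha\circ\pi\colon S'\to C$. Thus $\alpha'$ is a fibration from a smooth projective surface onto the smooth curve $C$ of genus $g\ge 1$. Each $\pi$-exceptional curve is contracted by $\alpha'$ (it maps to a point of $S$, hence to a point of $C$), so a general fibre $F$ of $\alpha'$ misses the exceptional locus and $\pi$ maps it isomorphically onto a general fibre of $\alpha$; hence it is enough to prove that the smooth curve $F$ has genus $h:=g(F)=0$ whenever $g\ge 2$, which I assume from now on.

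First I would extract the vanishing $H^0(C,R^1\alpha'_*\sO_{S'})=0$. Since $\alpha'$ has connected fibres, $\alpha'_*\sO_{S'}=\sO_C$, and the low-degree exact sequence of the Leray spectral sequence gives
$$0\to H^1(C,\sO_C)\to H^1(S',\sO_{S'})\to H^0(C,R^1\alpha'_*\sO_{S'})\to H^2(C,\sO_C)=0;$$
comparing dimensions ($h^1(S',\sO_{S'})=q(S')=g=h^1(C,\sO_C)$) yields the vanishing. As a non-zero torsion sheaf on a curve has global sections, $R^1\alpha'_*\sO_{S'}$ is then locally free, of rank $h$, and by relative duality $R^1\alpha'_*\sO_{S'}\simeq(\alpha'_*\omega_{S'/C})^\vee$.

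The remaining two ingredients are a degree inequality and a degree identity. By Fujita's semipositivity theorem $\alpha'_*\omega_{S'/C}$ is a nef vector bundle on $C$, so it has non-negative degree and therefore $\deg R^1\alpha'_*\sO_{S'}\le 0$. On the other hand, as the direct images above $R^1$ vanish,
$$1-g=\chi(\sO_{S'})=\chi(C,\sO_C)-\chi(C,R^1\alpha'_*\sO_{S'})=(1-g)-\chi(C,R^1\alpha'_*\sO_{S'}),$$
so $\chi(C,R^1\alpha'_*\sO_{S'})=0$ and Riemann--Roch on $C$ gives $\deg R^1\alpha'_*\sO_{S'}=h(g-1)$. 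Combining the two, $h(g-1)\le 0$, and since $g\ge 2$ this forces $h=0$, i.e. $F\simeq\PP^1$.

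I expect the only delicate points to be bookkeeping: checking that the minimal resolution affects neither $p_g$, nor $q$, nor the Albanese fibration (all standard because canonical surface singularities are rational Gorenstein), and invoking relative duality and Fujita's theorem in their correct form -- which is precisely why I work on the smooth surface $S'$ rather than directly on $S$. The real content of the argument is the collision between the Leray vanishing $H^0(C,R^1\alpha'_*\sO_{S'})=0$, which makes $R^1\alpha'_*\sO_{S'}$ torsion-free of non-positive degree, and the numerical identity $\deg R^1\alpha'_*\sO_{S'}=h(g-1)$ coming from $p_g(S')=0$ and $q(S')=g$.
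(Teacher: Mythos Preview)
Your proof is correct and takes a genuinely different route from the paper. The paper's argument is a two-line reduction: pass to a smooth model, observe that if the general fibre is not $\PP^1$ then $S$ is not uniruled, and then cite the classification result \cite[Thm.VI.13]{Bea83book} for minimal surfaces with $p_g=0$ and $q\ge 1$ to force $q=1$. You instead unpack the numerics directly: the Leray sequence together with $q(S')=g(C)$ kills $H^0(C,R^1\alpha'_*\sO_{S'})$, forcing this sheaf to be locally free; Fujita semipositivity of $\alpha'_*\omega_{S'/C}$ bounds its degree above by zero via relative duality; and the Euler-characteristic identity $\chi(\sO_{S'})=\chi(\sO_C)-\chi(R^1\alpha'_*\sO_{S'})$ pins the degree down to $h(g-1)$. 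The resulting inequality $h(g-1)\le 0$ finishes the argument.

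What each approach buys: the paper's proof is shorter and shows that the statement is really a shadow of surface classification, but it imports a non-trivial black box. Your argument is self-contained and isolates exactly which ingredients are needed (semipositivity of $\alpha'_*\omega_{S'/C}$ and nothing more from classification), at the cost of a few more lines of bookkeeping. Both are perfectly acceptable here; if anything, your version makes the mechanism more transparent.
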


\begin{proof}
Since canonical surface singularities are rational, we can replace $S$ with a resolution of singularities. Thus $S$ is a smooth surface with $p_g(S)=0$ and $q(S) \geq 1$. 
If the general $\alpha$-fibre is not $\PP^1$, the surface $S$ is not uniruled.
Thus we can apply \cite[Thm.VI.13]{Bea83book} to the minimal model of $S$ to obtain that $q(S)=1$.
\end{proof}

\begin{proposition} \label{proposition-exceptional-cases-B}
Let $S$ be a normal projective Gorenstein surface with $p_g(S)=0$, and let $A$ be a nef and big Cartier divisor on $S$ such that
$$
A \simeq K_S + \Delta_S
$$
where $\Delta_S$ is an effective Weil divisor such that the pair $(S, \Delta_S)$ is lc
and $A \cdot \Delta_S>0$. Assume that we have
$$
h^0(S, \sO_S(A)) \leq q(S).
$$
Then $S$ has canonical singularities and the Albanese map induces a fibration 
$$
\holom{\alpha}{S}{C}
$$
onto a curve
of genus $q(S)>0$. Set $r:=\rk (\alpha_* \sO_S(A))$, then 
we have 
$$
(r-1) (q(S)-1) \leq 1.
$$ 
Moreover we have 
$$
h^0(S, \sO_S(A)) = q(S).
$$
\end{proposition}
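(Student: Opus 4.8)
The plan is to reduce the whole statement to the minimal resolution $\mu\colon S'\to S$ together with Riemann--Roch, and then to exploit the Albanese map. \emph{Step 1 (the irregularity is positive).} Since $S$ is Gorenstein and $(S,\Delta_S)$ is lc, also $(S,0)$ is lc, so $S$ has canonical, simple elliptic or cusp singularities; in particular $K_S$ and $\Delta_S=A-K_S$ are Cartier and $A\cdot\Delta_S$ is a positive even integer. Write $K_{S'}=\mu^*K_S-\Delta$ with $\Delta=\sum_p E_p\geq 0$ supported with coefficient one on the exceptional divisors over the non-canonical points; then $p_g(S')=0$ and $q(S')=q(S)+e$, where $e$ is the number of non-canonical points. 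As $(S,\Delta_S)$ is lc, $\Delta_S$ avoids these points, so $\mu^*A=K_{S'}+D$ with $D:=\Delta+\mu^*\Delta_S$ effective and having at least $e+1$ connected components (the $E_p$ are pairwise disjoint and disjoint from $\mu^*\Delta_S\neq 0$). From $0\to\sO_{S'}(-D)\to\sO_{S'}\to\sO_D\to 0$ and Serre duality, $h^1(S',\sO_{S'}(\mu^*A))=h^1(S',\sO_{S'}(-D))\geq e$; since moreover $h^2(S',\sO_{S'}(\mu^*A))=h^0(S',-\mu^*\Delta_S-\Delta)=0$, Riemann--Roch on $S'$ gives
\[ h^0(S,\sO_S(A))=h^0(S',\sO_{S'}(\mu^*A))\ \geq\ \tfrac12 A\cdot\Delta_S+1-q(S')+e\ =\ \tfrac12 A\cdot\Delta_S+1-q(S)\ \geq\ 2-q(S), \]
so the hypothesis $h^0(S,\sO_S(A))\leq q(S)$ forces $q(S)\geq 1$.

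\emph{Step 2 (canonical singularities and the Albanese fibration).} Suppose a non-canonical point existed. Then $q(S')=q(S)+e\geq 2$ while $p_g(S')=0$, so by the Enriques--Kodaira classification $S'$ is birationally ruled over a smooth curve of genus $q(S')\geq 2$, and its Albanese map is the composition of this ruling with an embedding of the base. Every fibre of the ruling is a tree of $\PP^1$'s, hence contains neither an elliptic curve nor a cycle of rational curves; but the exceptional divisor of a simple elliptic (resp.\ cusp) point is exactly such a curve, and it must lie inside a fibre since it cannot dominate a curve of genus $\geq 2$ --- a contradiction. Hence $S$ has canonical, in particular rational, singularities. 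By Fact \ref{fact-albanese} the Albanese map $\alpha\colon S\to\operatorname{Alb}(S)$ exists; if its image were a surface, a general holomorphic $2$-form would pull back to a nonzero $2$-form on $S'$, contradicting $p_g(S')=0$. So $\alpha(S)$ is a curve; after Stein factorisation we get a fibration $\alpha\colon S\to C$ onto a smooth curve, and Castelnuovo--de Franchis (applicable because $p_g(S')=0$) together with $q(S')=q(S)$ gives $g(C)=q(S)>0$.

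\emph{Step 3 (the numerical bound and the equality).} If $g(C)=q(S)=1$ then $(r-1)(q(S)-1)=0$, and Riemann--Roch on $S$ (Fact \ref{fact-RR}, now available) gives $h^0(S,\sO_S(A))\geq\chi(S,\sO_S(A))=\tfrac12 A\cdot\Delta_S\geq 1=q(S)$, so $h^0(S,\sO_S(A))=q(S)$. Assume now $g:=g(C)=q(S)\geq 2$. By Lemma \ref{lemma-pg-zero} the general fibre $F$ is $\PP^1$, and $A$ nef and big forces $d:=A\cdot F\geq 1$ (otherwise $A$ would be numerically a pull-back from $C$, hence not big), so $r=h^0(F,\sO_S(A)|_F)=d+1$. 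All fibres of $\alpha$ have arithmetic genus $0$ and $\sO_S(A)$ is nef, so $R^1\alpha_*\sO_S(A)=0$ (checked on $S'$, whose fibres are trees of $\PP^1$'s), whence $\chi(S,\sO_S(A))=\chi(C,\sG)$ with $\sG:=\alpha_*\sO_S(A)$ locally free of rank $r$. Writing $A=K_{S/C}+\Delta_S+\alpha^*K_C$ one has $\sG=\sN\otimes\omega_C$ with $\sN:=\alpha_*(\omega_{S/C}\otimes\sO_S(\Delta_S))$, and comparing the two expressions for $\chi(S,\sO_S(A))$ yields $\deg\sN=\tfrac12 A\cdot\Delta_S-(d+2)(g-1)$. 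The crucial point is $\deg\sN\geq 0$: decomposing $\Delta_S$ into horizontal and vertical parts, using that $A$ is nef, and applying the adjunction formula to the strict transform of the horizontal part on $S'$ together with Riemann--Hurwitz for its degree-$(d+2)$ map to $C$, one gets $A\cdot\Delta_S\geq 2(d+2)(g-1)$. Therefore
\[ q(S)\ \geq\ h^0(S,\sO_S(A))\ =\ h^0(C,\sG)\ \geq\ \chi(C,\sG)\ =\ \deg\sN+r(g-1)\ \geq\ r(g-1), \]
which rearranges to $(r-1)(q(S)-1)\leq 1$. Since $g\geq 2$ and $r\geq 2$, this forces $g=r=2$, so $r(g-1)=2=q(S)$ and all the inequalities above are equalities; in particular $h^0(S,\sO_S(A))=q(S)$.

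\emph{Main obstacle.} The delicate steps are Step 2 and the positivity input $\deg\sN\geq 0$ in Step 3. The former forces one to combine the cohomological computation on the resolution with the Enriques--Kodaira classification in order to rule out simple elliptic and cusp singularities; the latter requires either a semipositivity theorem for $\alpha_*(\omega_{S/C}\otimes\sO_S(\Delta_S))$ or the explicit adjunction/Riemann--Hurwitz estimate for the horizontal part of $\Delta_S$. Once these are granted, everything else is bookkeeping with Riemann--Roch.
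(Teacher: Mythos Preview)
Your argument is correct, and it reaches the same Riemann--Roch inequality $h^0(S,\sO_S(A))\geq r(g-1)$ on the base curve that the paper uses, but you get there by a genuinely different route in the two places you yourself flag as the main obstacles.

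\emph{Canonical singularities.} The paper does not argue on the minimal resolution at all. Instead it first proves the entire statement assuming canonical singularities, and then handles the general case by passing to the canonical modification $\tau\colon S_c\to S$: one checks that $(S_c,\,E+\tau^*\Delta_S)$ again satisfies all the hypotheses with $p_g(S_c)=0$ and $q(S_c)>q(S)$, so the canonical case applied to $S_c$ gives $h^0(S,\sO_S(A))=h^0(S_c,\sO_{S_c}(\tau^*A))=q(S_c)>q(S)$, a contradiction. Your argument is more geometric: from $q(S)\geq 1$ and $e\geq 1$ you get $q(S')\geq 2$, hence $S'$ is birationally ruled over a curve of genus $\geq 2$, and a simple elliptic or cusp exceptional divisor (of arithmetic genus one) cannot sit inside a tree of $\PP^1$'s. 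This is more elementary but more ad hoc; the paper's bootstrap is cleaner and would work verbatim in higher-dimensional analogues.

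\emph{Semipositivity.} For $\deg\sN\geq 0$ the paper simply invokes Fujino's theorem \cite[Thm.~1.1]{Fuj17} that $\alpha_*\sO_S(K_{S/C}+\Delta_S)$ is a nef vector bundle whenever $(S,\Delta_S)$ is lc. Your replacement is an explicit estimate: since $(S,\Delta_S)$ lc with integral $\Delta_S$ forces $\Delta_S$ to be \emph{reduced}, each horizontal component $D_i$ satisfies $A\cdot D_i\geq (K_S+D_i)\cdot D_i=2p_a(D_i)-2\geq \delta_i(2g-2)$ by adjunction and Riemann--Hurwitz, and summing gives $A\cdot\Delta_S\geq 2(d+2)(g-1)$. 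This is a nice self-contained substitute for the black box, but you should make the reducedness of $\Delta_S$ explicit, since without it the ``degree-$(d+2)$ map to $C$'' and the component-wise adjunction inequality are not available as stated.
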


\begin{remark*}
In the second step of the proof we will use the following fact that follows easily from the Leray spectral sequence: let $S$ be a normal projective surface with Gorenstein singularities
such that $p_g(S)=0$. Assume that $S$ has irrational (hence non-canonical \cite[Cor.5.24]{KM98}) singularities, and let $\holom{\tau}{S_c}{S}$ be the canonical modification \cite[Thm.1.31]{Kol13}. Then we have
$$
p_g(S_c)=p_g(S)=0, \qquad q(S_c)>q(S).
$$
\end{remark*}

\begin{proof}[Proof of Proposition \ref{proposition-exceptional-cases-B}]
Since $A \cdot \Delta_S>0$, we have $\Delta_S \neq 0$.
Thus 
$$
H^2(S, \sO_S(K_S+\Delta_S)) \simeq H^0(S, \sO_S(-\Delta_S))=0
$$
and we have the inequalities
$$
q(S) \geq h^0(S, \sO_S(K_S+\Delta_S)) \geq \chi(S, \sO_S(K_S+\Delta_S)).
$$

{\em 1st step. Assume that $S$ has canonical singularities.} By Fact \ref{fact-RR} we have the Riemann-Roch formula
$$
\chi(S, \sO_S(K_S+\Delta_S)) = \frac{1}{2} (K_S+\Delta_S) \cdot \Delta_S + \chi(\sO_S).
$$
Hence
$A \cdot \Delta_S>0$ implies that $q(S) > \chi(S, \sO_S) = 1 - q(S)$. 
In particular we have $q(S)>0$, so there is a non-trivial Albanese morphism $\alpha$. If $\dim \alpha(S)=2$, the ramification formula shows that $p_g(S)>0$ which we excluded. Thus the Albanese morphism gives a fibration \cite[Lemma 2.4.5]{BS95}
$$
\holom{\alpha}{S}{C}
$$
onto a curve $C$ of genus $g:=q(S)>0$.
Since the pair $(S, \Delta_S)$ is lc, the direct image sheaf
$$
\alpha_* \sO_S(K_{S/C}+\Delta_S) \simeq \alpha_* \sO_S(A) \otimes \sO_C(-K_C)
$$
is a nef vector bundle \cite[Thm.1.1]{Fuj17} of rank $r$. Thus
$$
V:= \alpha_* \sO_S(A) \simeq \alpha_* \sO_S(K_{S/C}+\Delta_S) \otimes \sO_C(K_C)
$$
has $c_1(V) \geq r (2g-2)$. By the Riemann-Roch formula on curves we have
\begin{equation}
\label{RR-estimate}
h^0(S, \sO_S(A)) = h^0(C, V) \geq \chi(C, V) = c_1(V) + r \chi(C, \sO_C)
\geq r (g-1).
\end{equation}
Now observe that $r(g-1) > g$ unless $(r-1) (g-1) \leq 1$.

Finally let us show that we have $h^0(S, \sO_S(A)) = q(S)$: if $h^0(S, \sO_S(A)) \leq q(S)-1$
the Riemann-Roch estimate \eqref{RR-estimate} becomes $g-1 \geq r (g-1)$.

{\em Subcase a) Assume that $g>1$.} 
Then the unique possibility $r=1$. Since $A$ is nef and big, this implies that the general $\alpha$-fibre is not $\PP^1$. Yet this contradicts 
Lemma \ref{lemma-pg-zero}.

{\em Subcase b) Assume that $g=1$.} 
In this case the Riemann-Roch inequality becomes
$$ 
h^0(S, \sO_S(K_S+\Delta_S)) \geq \frac{1}{2} (K_S+\Delta_S) \cdot \Delta_S>0,
$$
so $h^0(S, \sO_S(K_S+\Delta_S)) \geq 1 = g$.

{\em 2nd step. We show that $S$ has canonical singularities.}
Since $S$ is Gorenstein, it has canonical singularities if and only if it has rational singularities. Arguing by contradiction we assume that $S$ has non-canonical singularities and denote by $\holom{\tau}{S_c}{S}$ the canonical modification.
Then we have $K_{S_c} \simeq \tau^* K_S - E$ with $E$ an effective Weil divisor.
Observe that 
$$
K_{S_c} + E + \tau^* \Delta_S \simeq \tau^* (K_S + \Delta_S),
$$
so the pair $(S_c, \Delta_{S_c}) := (S_c, E + \tau^* \Delta_S)$ is lc.
Moreover
$$
A_c := \tau^* A \simeq K_{S_c} + \Delta_{S_c}
$$
is a nef and big Cartier divisor
with $A_c \cdot \Delta_{S_c}=A \cdot \Delta_S>0$ and $h^0(S, \sO_S(A)) = h^0(S_c, \sO_{S_c}(A_c))$.
Finally by the remark before the proof one has 
$p_g(S_c) \leq p_g(S)=0$ and $q(S_c)>q(S)$. Therefore by Step 1 of the proof
$$
h^0(S, \sO_S(A)) = h^0(S_c, \sO_{S_c}(A_c)) = q(S_c) > q(S),
$$
a contradiction to our assumption.
\end{proof}

The conditions in Proposition \ref{proposition-exceptional-cases-B} are very restrictive,
however there is a classical example that will play a prominent role
in Section \ref{section-nef-case}:

\begin{example} \label{example-q-one}
Let $C$ be an elliptic curve, and for some point $p \in C$ let
$$
0 \rightarrow \sO_C \rightarrow V \rightarrow \sO_C(p) \rightarrow 0
$$
be an unsplit extension. Set $\holom{\alpha}{S:=\PP(V)}{C}$ and denote by $\zeta_S$ the tautological class on $S$. Set $\Delta_S := 3 \zeta_S - \alpha^* p$, then $\Delta_S$ is an ample divisor
\cite[V, Prop.2.21]{Har77}
such that $|\Delta_S|$ contains an element with normal crossing singularities.
Thus the pair $(S, \Delta_S)$ is lc and 
$$
A :=K_S+\Delta_S \simeq \zeta_S 
$$
is an ample divisor with $h^0(S, \sO_S(A))=1=q(S)$.
\end{example}

Since it would be tedious to go into the classification of surfaces with small invariants,
we want a convenient criterion to exclude this kind of exceptional surfaces in
the proof of Theorem \ref{theorem-main}:

\begin{proposition} \label{proposition-no-rational-curve}
Let $S$ be a normal projective Gorenstein surface with $p_g(S)=0$, and let $A$ be an ample Cartier divisor on $S$ such that
$$
A \simeq K_S + \Delta_S
$$
where $\Delta_S$ is an effective Weil divisor such that the pair $(S, \Delta_S)$ is lc. Assume that we have
$$
h^0(S, \sO_S(A)) \leq q(S).
$$
Then $\Delta_S$ does not contain any smooth rational curve. 
\end{proposition}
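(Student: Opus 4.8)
The plan is to argue by contradiction, combining the structural conclusions of Proposition \ref{proposition-exceptional-cases-B} with a careful intersection-theoretic analysis on the resulting elliptic-type fibration. So suppose $l \subset \Delta_S$ is a smooth rational curve. First I would invoke Proposition \ref{proposition-exceptional-cases-B}: the hypotheses here are exactly those there (note $A$ ample is stronger than nef and big), so $S$ has canonical singularities, the Albanese map is a fibration $\alpha \colon S \to C$ onto a curve of genus $q := q(S) > 0$, we have $h^0(S, \sO_S(A)) = q$, and $(r-1)(q-1) \le 1$ where $r = \rk(\alpha_* \sO_S(A))$. Since $l \cong \PP^1$ and $C$ has positive genus, the morphism $\alpha|_l$ is constant, so $l$ is contained in a fibre $F$ of $\alpha$.

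Next I would extract numerical information from the two possibilities allowed by $(r-1)(q-1) \le 1$. \textbf{Case $q \ge 2$.} Then $r = 1$, so $\alpha_* \sO_S(A)$ is a line bundle and the general fibre $F$ satisfies $A \cdot F = 0$; since $A$ is ample this forces $F = 0$, absurd — so in fact this case cannot occur at all once we know $A$ is ample (as opposed to merely nef and big). Hence $q = 1$, i.e.\ $C$ is elliptic, which is the situation of Example \ref{example-q-one}. \textbf{Case $q = 1$.} Here $r$ is unconstrained, but $A$ ample again gives $A \cdot F > 0$ for the general fibre. I would pass to the composition $\tau \colon S' \to S$ of normalization and minimal resolution as in Fact \ref{fact-deformations-surface}; since $S$ has canonical (hence rational, Du Val) singularities, $\tau$ is crepant, $K_{S'} \simeq \tau^* K_S$, and $\chi(S', \sO_{S'}) = \chi(S, \sO_S) = 0$. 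The elliptic fibration $\alpha' = \alpha \circ \tau \colon S' \to C$ has $p_g(S') = 0$, $q(S') = 1$, so $S'$ is either a (blown-up) bielliptic surface or of Kodaira dimension $\le 1$ with an elliptic or quasi-elliptic fibration over the elliptic base — the key point being that all fibres of $\alpha'$, and in particular the general fibre, have arithmetic genus one and are therefore \emph{not} rational.

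The contradiction should then come from adjunction applied to $l$. Let $l' \subset S'$ be the strict transform of $l$; it is a smooth rational curve not contained in the exceptional locus $E$ of $\tau$ (since $l \not\subset S_{\sing}$, as $l$ is a component of the effective Weil divisor $\Delta_S$ inside which $S$ is lc, so $l$ meets the smooth locus). Because $K_{S'} \simeq \tau^* K_S$ and $l \subset F$ with $F$ a fibre of a genus-one fibration, I would compute $K_{S'} \cdot l' = \tau^* K_S \cdot l' = K_S \cdot l$ and compare with $-2 = \deg K_{l'} = (K_{S'} + l') \cdot l'$ by adjunction on the \emph{smooth} surface $S'$; this pins down $l'^2 = -2 - K_{S'} \cdot l'$. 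On the other hand $l$ (hence $l'$) sits inside a fibre $F'$ of $\alpha'$ with $F'^2 = 0$ and $p_a(F') = 1$, so $l'$ is a proper subcurve of a fibre of arithmetic genus one; the classification of such subcurves (they are the components of Kodaira's list of degenerate elliptic fibres, each a smooth rational curve of self-intersection $-2$ in the minimal elliptic surface, possibly with multiplicity) forces $K_{S'} \cdot l' = 0$ and $l'^2 = -2$. I would then use the ampleness of $A \simeq K_S + \Delta_S$ together with $\Delta_S \supseteq l$: writing $\Delta_S = l + \Delta_S'$ with $\Delta_S' \ge 0$, ampleness gives $0 < A \cdot l = (K_S + l + \Delta_S') \cdot l = K_S \cdot l + l^2 + \Delta_S' \cdot l = -2 + \Delta_S' \cdot l$ on the smooth locus, so $\Delta_S' \cdot l \ge 3$; but $\Delta_S'$ meets $l$ only inside the fibre $F$ containing $l$, and $F \cdot l = 0$ bounds $\Delta_S' \cdot l$ by a contribution coming purely from components of $\Delta_S'$ lying in $F$ — tracing through the fibre-component intersection matrix (negative semidefinite with one-dimensional kernel spanned by $F$) yields $\Delta_S' \cdot l \le $ a small explicit bound contradicting $\Delta_S' \cdot l \ge 3$.

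The main obstacle I anticipate is the last step: controlling $\Delta_S' \cdot l$ when $\Delta_S'$ may have components \emph{outside} the fibre $F$ as well as inside it, and when $S$ is merely normal Gorenstein (so intersection numbers on $S$ must be interpreted via $S'$, and the effective Weil divisor $\Delta_S$ may pass through singular points of $S$). To handle this cleanly I would work entirely on $S'$: pull back $\Delta_S$ (as a Weil divisor, via $\tau^*$ defined up to exceptional correction), use $-K_{S'} \simeq \tau^*(-K_S)$ to keep $A_{S'} := \tau^* A \simeq K_{S'} + \Delta_{S'}$ with $\Delta_{S'} \ge 0$ and $(S', \Delta_{S'})$ lc (as in the proof of Proposition \ref{proposition-exceptional-cases-B}, 2nd step), and then the negativity of the fibre intersection form on $S'$ plus $A_{S'} \cdot l' > 0$ and adjunction give the numerical contradiction directly. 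An alternative, slicker finish: since $(S, \Delta_S)$ is lc and $l \subset \lfloor \Delta_S \rfloor$ with $A \cdot l > 0$, the pair $(S, l)$ is lc and $A \cdot l = (K_S + \Delta_S) \cdot l \ge (K_S + l) \cdot l = \deg(\omega_l) = -2$ only if $l$ moves or deforms — but a negative curve in a fibre does not move, while $A \cdot l > 0$ combined with $A$ ample and the adjunction value $-2$ forces the remaining part of $\Delta_S$ to charge $l$ so heavily that some log canonical threshold or multiplicity bound is violated along $F$; I would present whichever of these two routes produces the shorter write-up.
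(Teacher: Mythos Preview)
Your proposal has genuine gaps in both cases.

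\textbf{Case $q \ge 2$.} The inequality $(r-1)(q-1) \le 1$ does \emph{not} force $r = 1$ when $q = 2$: it allows $r = 2$. More seriously, $r = 1$ would mean $h^0(F, \sO_F(A|_F)) = 1$ for the general fibre, which is \emph{not} the same as $A \cdot F = 0$; for instance if $F$ is elliptic and $A \cdot F = 1$ then $r = 1$. The paper closes this gap by first invoking Lemma \ref{lemma-pg-zero} (which you never cite): since $p_g(S) = 0$ and $q \ge 2$, the general fibre must be $\PP^1$, whence $r = A \cdot F + 1 \ge 2$, forcing $q = 2$, $r = 2$, $A \cdot F = 1$; then $\alpha$ is a $\PP^1$-bundle, $l$ is a fibre, and a semipositivity argument on $\alpha_* \sO_S(K_{S/C} + \Delta_S - l)$ pushes $h^0$ up to at least $4$, contradiction.

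\textbf{Case $q = 1$.} Your central claim --- that for a surface with $p_g = 0$, $q = 1$ the Albanese fibres have arithmetic genus one --- is false. Example \ref{example-q-one} itself is a $\PP^1$-bundle over an elliptic curve and satisfies all your hypotheses, so there is nothing preventing rational fibres a priori. Consequently your adjunction computation (forcing $K_{S'} \cdot l' = 0$ from ``Kodaira's list'') collapses, and the vague finish about bounding $\Delta_S' \cdot l$ via the fibre intersection form has no traction: $\Delta_S'$ need not lie in $F$, and the horizontal part can contribute arbitrarily to $\Delta_S' \cdot l$. The paper takes an entirely different route here: Riemann-Roch with $h^0(A) = 1$ and $\chi(\sO_S) = 0$ gives $A \cdot \Delta_S = 2$, so $p_a(\Delta_S) = 2$; Lemma \ref{lemma-configuration} then shows \emph{all} components of $\Delta_S$ are rational and $\Delta_S$ is connected, hence contained in a single fibre $F_0$ with $p_a(F_0) \ge 2$. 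This forces the general fibre to have genus $\ge 2$, so $S$ is \emph{not} uniruled; passing to the minimal model $S_m$ (which contains no rational curves) contracts the connected cycle $\Delta_S$, contradicting $p_a(\Delta_S) = 2 > 0$ since $S_m$ is smooth. The key lemma you are missing is precisely Lemma \ref{lemma-configuration}, and the key idea is to show the fibre genus is \emph{large}, not one.
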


\begin{proof}
We argue by contradiction, so let $l \subset \Delta_S$ be a smooth rational curve.
In particular $\Delta_S \neq 0$ and therefore $A\cdot\Delta_S>0$.
By Proposition \ref{proposition-exceptional-cases-B} the surface has canonical singularities
and we have an Albanese fibration $\holom{\alpha}{S}{C}$ onto a smooth curve $C$ of
genus $g=q(S)$. Moreover we have
\begin{equation}
\label{help}
(r-1) \cdot (q(S)-1) \leq 1
\end{equation}
where $r= \rk (\alpha_* \sO_S(A))$.

{\em 1st case. Assume that $g>1$.} By Lemma \ref{lemma-pg-zero} the general $\alpha$-fibre $F$ is $\PP^1$. Since $A \cdot F \geq 1$ we have $r \geq 2$ and thus \eqref{help}
yields $g=2$ and $r=2$. Moreover $A \cdot F=1$ implies that $\holom{\alpha}{S}{C}$ is a $\PP^1$-bundle over the genus two curve $C$ \cite[II,Thm.2.8]{Kol96}. In particular the rational curve $l$ is a fibre 
of $\alpha$, thus a nef divisor on $S$. The pair $(S, \Delta_S-l)$ is lc, so
the direct image 
$$
\alpha_* \sO_S(K_{S/C}+\Delta_S-l) \simeq \alpha_* \sO_S(A) \otimes \sO_C(-K_C-l)
$$
is a nef vector bundle \cite[Thm.1.1]{Fuj17}. Thus
$\alpha_* \sO_S(A)$ has degree at least six. By the Riemann-Roch formula on the curve $C$
this implies $h^0(S, \sO_S(A))=h^0(C, \alpha_* \sO_S(A)) \geq 4$, a contradiction.

{\em 2nd case. Assume that $g=1$.} By Proposition \ref{proposition-exceptional-cases-B} we have
$h^0(S, \sO_S(A))=1$ and thus by Riemann-Roch 
$$
1 \geq \chi(S, \sO_S(A)) =\frac{1}{2} (K_S+\Delta_S) \cdot \Delta_S>0.
$$
Therefore $A \cdot \Delta_S = (K_S+\Delta_S) \cdot \Delta_S=2$ and Lemma \ref{lemma-configuration} applies: all the irreducible components of $\Delta_S$ are rational curves,
so they are contracted by the Albanese map. Moreover the support of $\Delta_S$ is connected, so it is contained in an $\alpha$-fibre $F_0$. Since $p_a(\Delta_S)=2$  we deduce that $p_a(F_0) \geq 2$. Since the arithmetic genus is constant in the flat family $\holom{\alpha}{S}{C}$
we obtain that $S$ is not uniruled. Let $\holom{\tau}{S'}{S}$ be the minimal resolution,
and let $\holom{\nu}{S'}{S_m}$ the minimal model of $S'$. Since $S$ has canonical singularities we have
$$
p_g(S_m)=p_g(S')=p_g(S)=0, \qquad q(S_m)=q(S')=q(S)>0.
$$
By \cite[Prop.VI.6]{Bea83book} the minimal surface $S_m$ does not contain any rational curves. Since the exceptional divisors of the resolution $\tau$ are rational curves the rigidity lemma \cite[Lemma 1.15]{Deb01} yields a birational map $\holom{\nu}{S}{S_m}$ that contracts all the rational curves on $S$. In particular it contracts the divisor $\Delta_S$. Yet $p_a(\Delta_S)=2>0$, a contradiction to the fact that the smooth surface $S_m$ has rational singularities.
\end{proof}

\begin{lemma} \label{lemma-configuration}
Let $S$ be a normal projective surface with canonical singularities and $p_g(S)=0$, and let $A$ be an ample Cartier divisor on $S$ such that
$$
A \simeq K_S + \Delta_S
$$
where $\Delta_S$ is an effective Weil divisor such that the pair $(S, \Delta_S)$ is lc
and $A \cdot \Delta_S=2$.
Assume that $\Delta_S$ has an irreducible component that is a smooth rational curve $l$.
Then the support of $\Delta_S$ is connected and all its irreducible components are rational curves.
\end{lemma}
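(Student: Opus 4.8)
The plan is to analyze the configuration of $\Delta_S$ very explicitly, exploiting that the "adjoint-type" number $A \cdot \Delta_S = (K_S + \Delta_S) \cdot \Delta_S$ equals $2$. Write $\Delta_S = \sum_i a_i \Gamma_i$ with $a_i \geq 1$ and the $\Gamma_i$ distinct irreducible curves, one of which is the smooth rational curve $l$. The first observation is that since $A$ is ample and $\Delta_S$ is effective and nonzero, $A \cdot \Gamma_i \geq 1$ for every component, so from $A \cdot \Delta_S = 2$ we learn that $\Delta_S$ has at most two components, and if it has two then both appear with coefficient one and $A \cdot \Gamma_1 = A \cdot \Gamma_2 = 1$; if it has one component then either $\Delta_S = l$ (with $A \cdot l = 2$) or $\Delta_S = 2l$ (with $A \cdot l = 1$), or $\Delta_S = \Gamma$ with $\Gamma$ irreducible and $A \cdot \Gamma = 2$.

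Next I would pass to the minimal resolution $\holom{\tau}{S'}{S}$, writing $K_{S'} \simeq \tau^* K_S$ (canonical singularities) and letting $\Gamma_i'$ be the strict transforms; since the exceptional curves of $\tau$ are $(-2)$-curves, hence rational, it suffices to show that every $\Gamma_i'$ is rational. The key tool is the adjunction/arithmetic-genus inequality on the smooth surface $S'$: for each component one has $2 p_a(\Gamma_i') - 2 = \Gamma_i'^2 + K_{S'} \cdot \Gamma_i' \geq -2$, and more usefully, because $(S, \Delta_S)$ is lc the pair $(S', \tau^*\Delta_S)$ has lc (indeed the strict transform appears with coefficient $\leq 1$), which bounds how negatively $\Delta_S'$ can intersect each $\Gamma_i'$. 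The heart of the argument is then the identity $A \cdot \Delta_S = (K_S + \Delta_S)\cdot \Delta_S = \sum_i a_i\,(K_{S'} + \tau^*\Delta_S)\cdot \Gamma_i' \geq \sum_i a_i\,(2p_a(\Gamma_i') - 2 + (\text{nonnegative correction from lc-ness}))$; combined with the known presence of the rational component $l$, for which $(K_{S'}+\tau^*\Delta_S)\cdot l' \geq -2 + \varepsilon_l$ with $\varepsilon_l$ controlled, the bound $A\cdot\Delta_S = 2$ forces each remaining component to contribute $\leq 2$, hence $2p_a(\Gamma_i')-2 \leq 0$, i.e. $p_a(\Gamma_i') \leq 1$, and a finer bookkeeping of the cross terms $\Gamma_i' \cdot \Gamma_j'$ and of the coefficient on $l$ rules out the elliptic case, giving $p_a(\Gamma_i') = 0$ for all $i$.

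I expect the main obstacle to be the careful case-by-case accounting when $\Delta_S$ is irreducible but nonreduced ($\Delta_S = 2l$, which is immediate, versus $\Delta_S = \Gamma$ with $A\cdot\Gamma = 2$ and $\Gamma$ possibly of positive genus) and, in the two-component case, tracking the contribution of the connecting intersection $\Gamma_1' \cdot \Gamma_2' \geq 0$: one must show it cannot "absorb" the negativity and still leave room for a positive-genus component. A clean way to organize this is to note $(K_{S'} + \Delta_S')\cdot \Delta_S' = A\cdot\Delta_S = 2$ on the side of $S'$ (since $\tau^*\Delta_S - \Delta_S'$ is $\tau$-exceptional and meets $\Delta_S'$ nonnegatively, this only helps), expand as $\sum a_i a_j \Gamma_i'\cdot\Gamma_j' + \sum a_i K_{S'}\cdot\Gamma_i' = 2$, and use $\Gamma_i'\cdot\Gamma_j'\geq 0$ for $i\neq j$ together with $2p_a(\Gamma_i') - 2 = \Gamma_i'^2 + K_{S'}\cdot\Gamma_i'$ to conclude; the only genuinely delicate point is verifying that the coefficient of $l$ cannot exceed $1$ in a two-component configuration without violating $A\cdot l \geq 1$ and $A \cdot \Delta_S = 2$, which follows from the reducedness forced in the first paragraph. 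Throughout, Fact \ref{fact-RR} (Riemann–Roch on $S$) and the lc-ness of $(S,\Delta_S)$ are the only external inputs needed.
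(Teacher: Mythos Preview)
Your initial reduction is correct and matches the paper: since $(S,\Delta_S)$ is lc the divisor $\Delta_S$ is reduced, and $A \cdot \Delta_S = 2$ with $A$ ample forces $\Delta_S = l + R$ with $R$ irreducible and $A \cdot R = 1$ (the one-component cases are excluded by $p_a(\Delta_S) = 2$ and by lc-ness). However, your plan for ruling out the ``elliptic case'' $p_a(R') = 1$ has a genuine gap.

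The inequality you propose, $(K_{S'}+\Delta_S')\cdot\Delta_S' \leq 2$ with $\Delta_S'$ the \emph{strict transform}, yields only $p_a(R') + l'\cdot R' \leq 3$, which is compatible with $p_a(R') \in \{1,2,3\}$. Even using the sharper equality $\tau^* A \cdot R' = A \cdot R = 1$ one obtains $2p_a(R') - 2 + (\tau^*\Delta_S - R')\cdot R' = 1$; parity and effectivity then give $p_a(R') \leq 1$, but the possibility $p_a(R') = 1$ with $(\tau^*\Delta_S - R')\cdot R' = 1$ survives. Your ``finer bookkeeping of the cross terms'' does not specify how to close this, and with only the cross-term inequalities $\Gamma_i'\cdot\Gamma_j' \geq 0$ and the lc hypothesis I do not see a way to do it.

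The paper supplies the missing ingredient. It works with the \emph{full pullback} $\Delta_{S'} := \tau^*\Delta_S$, for which $p_a(\Delta_{S'}) = p_a(\Delta_S) = 2$ is preserved (this is lost when you pass to the strict transform), first proves that $\Delta_S$ is connected, and then in the critical case $(\Delta_{S'}-R')\cdot R' = 1$ uses the addition formula
\[
2 = p_a(\Delta_{S'}) = p_a(R') + p_a(\Delta_{S'}-R') + (\Delta_{S'}-R')\cdot R' - 1
\]
to reduce to showing $p_a(\Delta_{S'}-R') = 0$. This last step relies on the explicit ADE structure of canonical surface singularities: $\Delta_{S'}-R'$ consists of $l'$ together with the exceptional trees of $(-2)$-curves, and because $l$ is \emph{smooth} its strict transform meets each such tree transversally in a single point, so the entire configuration has arithmetic genus zero. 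Neither the preservation of $p_a$ under the full pullback nor the ADE analysis appears in your outline, and without them the argument does not conclude. (Incidentally, $p_g(S)=0$ plays no role in the paper's proof of this lemma, and Riemann--Roch is not invoked here.)
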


\begin{proof}
Recall first that canonical surface singularities are Gorenstein, so both $K_S$ and $\Delta_S = A-K_S$ are Cartier. 
We have $p_a(\Delta_S)=2$, so the inclusion $l \subset \Delta_S$ must be strict.
Since $A$ is ample with $A \cdot \Delta_S=2$ we obtain
$$
\Delta_S = l + R
$$
with $R$ an irreducible curve such that $A \cdot R=1$. In particular we have by subadjunction
$$
\deg K_R \leq (K_S+R) \cdot R \leq (K_S+l+R) \cdot R = A \cdot R = 1,
$$
so $p_a(R)=\frac{1}{2} \deg K_R + 1  \leq 1$. In particular $\Delta_S$ is connected because otherwise 
$$
2= p_a(\Delta_S) = p_a(R) + p_a(l)  \leq 1
$$
yields a contradiction. 

Let $\holom{\tau}{S'}{S}$ be the minimal resolution and set 
$$
\Delta_{S'} := \tau^* \Delta_S
$$
Since $\Delta_S$ is connected, the cycle $\Delta_{S'}$ is connected.
Denote by $R' \subset \Delta_{S'}$ the strict transform of $R$. We claim that $R'$ is a rational curve, and this concludes the proof.

{\em Proof of the claim.}
Since $S$ has canonical singularities the minimal resolution $\tau$ is crepant, so we
have $K_{S'} \simeq \tau^* K_S$ and 
$$
K_{S'}+\Delta_{S'} = \tau^* (K_S+\Delta_S) = \tau^* A.
$$
Thus we have
$$
1 = \tau^* A \cdot R' = (K_{S'}+\Delta_{S'}) \cdot R' = (K_{S'}+R') \cdot R' + (\Delta_{S'}-R') \cdot R'.
$$
If we show that $(\Delta_{S'}-R') \cdot R'\geq 2$, then the adjunction formula yields the claim. Since $\Delta_{S'}$ is connected, we have $(\Delta_{S'}-R') \cdot R'\geq 1$, we will argue by contradiction to exclude the case where we have an equality. 

In this case $R'$ is a curve of arithmetic genus one that meets $\Delta_{S'}-R'$ transversally exactly in one point. In particular we have
$$
2 = p_a(\Delta_{S'}) = p_a(R') + p_a(\Delta_{S'}-R') = 1 + p_a(\Delta_{S'}-R').
$$
The curve $l \subset S$ being smooth it meets the exceptional divisor over every
point $p \in l \cap S_{sing}$ in exactly one point and the intersection is transverse. Thus
we have
$$
p_a(\Delta_{S'}-R') = p_a(l) + \sum_{p \in l \cap S_{sing}} \fibre{\tau}{p} = 0
$$
since the exceptional divisors of the minimal resolution of an ADE-singularity have arithmetic genus zero. This gives the final contradiction.
\end{proof}

\section{Positivity arguments} \label{section-positivity}

This section is the technical core of the paper. We will study the positivity properties of the Cartier classes $M_X$ and $B_X$ and successively improve our knowledge about the existence and singularities of effective divisors contained in these classes.

\begin{proposition} \label{proposition-B-not-nef}
In the situation of Setup \ref{setup}, the divisor $B_X$ is not nef.
If $B_X$ is effective, then $h^0(X, \sO_X(B_X))=1$ and $B_X$ is a normal prime divisor.
\end{proposition}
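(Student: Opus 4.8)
The plan is to prove the two assertions separately, in the order they are stated.

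First I would show that $B_X$ is not nef. Suppose for contradiction that it is. Since $\rho(X) = \rho(Y)$ and $-K_X|_Y \simeq M + B$, the divisor $-K_X - B_X$ restricts to $M$ on $Y$; applying Theorem \ref{theorem-kollar} (which says nefness on $Y$ is equivalent to nefness on $X$), $M$ is nef on $Y$ exactly when $M_X = -K_X - B_X$ is nef on $X$. So if $B_X$ is nef, then $M_X = -K_X - B_X$ is a difference of nef classes — but more usefully, I would instead argue directly on $B$: by Corollary \ref{corollary-restrictions-singularities} the divisor $B \simeq B_X|_Y$ is an effective nonzero divisor with $h^0(Y, \sO_Y(B)) = 1$ (Fact \ref{fact-not-moving}), so $B$ cannot be the pullback of an ample class nor have a multiple moving. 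If $B_X$ were nef on the Fano fourfold $X$, then by Lemma \ref{lemma-two-sections-fano} we would get $h^0(X, \sO_X(B_X)) \geq 2$, and since $\pic(X) \to \pic(Y)$ is an isomorphism compatible with restriction and $B_X|_Y \simeq B$ with $B$ nonzero effective, the restriction $H^0(X, \sO_X(B_X)) \to H^0(Y, \sO_Y(B))$ would have to be injective once we quotient by sections vanishing on $Y$ — more carefully, $h^0(X, \sO_X(B_X)) \geq 2$ forces $h^0(X, \sO_X(B_X - Y)) = h^0(X, \sO_X(M_X)) \neq 0$ or $h^0(Y, \sO_Y(B)) \geq 2$; the latter contradicts Fact \ref{fact-not-moving}, and in the former case $B_X$ would be a nonzero effective divisor together with $B_X - Y = -M_X$ effective, forcing $M_X \equiv -$(effective), incompatible with $M$ being a nonzero mobile part on $Y$ (which has nonnegative intersection with the general member of a covering family of curves). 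I expect this is where the argument needs care: pinning down exactly which positivity of $M$ on $Y$ is violated, and I would lean on the fact that $M$ restricted to $B$ equals $K_B + M|_B$ times... — rather, on $(K_B + M|_B)\cdot M|_B > 0$ from Corollary \ref{corollary-restrictions-singularities}, which shows $M$ is not the pullback of a point and $M|_B$ is in particular nonzero and somewhat positive.

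Second, assume $B_X$ is effective; I must show $h^0(X, \sO_X(B_X)) = 1$ and that $B_X$ is a normal prime divisor. For the cohomological statement, by Grothendieck–Lefschetz $\pic(X) \xrightarrow{\sim} \pic(Y)$, and I would consider the exact sequence $0 \to \sO_X(B_X - Y) \to \sO_X(B_X) \to \sO_Y(B) \to 0$, i.e. $0 \to \sO_X(-M_X) \to \sO_X(B_X) \to \sO_Y(B) \to 0$. Since $M_X$ is the class of a nonzero effective divisor $M$ on $Y$ and $\pic(X) \cong \pic(Y)$, we have $h^0(X, \sO_X(-M_X)) = 0$ (a nonzero effective class has no nonzero anti-effective sections), so $h^0(X, \sO_X(B_X)) \leq h^0(Y, \sO_Y(B)) = 1$ by Fact \ref{fact-not-moving}; as $B_X$ is effective this forces equality. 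For primality and normality: since $h^0(X, \sO_X(B_X)) = 1$ the unique effective divisor $D \in |B_X|$ has no moving part, and $D|_Y \simeq B$ which is irreducible and normal by hypothesis (Setup \ref{setup}); if $D$ had several components or were nonreduced, then since $Y$ is a general — in particular ample and movable — anticanonical divisor, $D \cap Y = B$ would inherit that decomposition or nonreducedness, contradicting irreducibility/reducedness of $B$. For normality of $D$ itself, I would use inversion of adjunction (Theorem \ref{theorem-adjunction}) together with the lc-ness already available: the pair $(Y, B)$ is lc by Corollary \ref{corollary-restrictions-singularities}, hence $(X, Y + D)$ is lc near $Y$ (here $Y + D \sim -K_X + B_X$, $\Q$-Cartier), so $(X, D)$ is lc, so $D$ is normal (being a divisor in a smooth variety with lc — indeed at worst canonical in codimension... ) — more simply, an lc pair $(X,D)$ with $X$ smooth has $D$ normal crossing in codimension one hence $D$ normal if connected, and $D$ is connected being ample-adjacent / since $D|_Y = B$ is connected.

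The main obstacle I anticipate is the non-nefness part: one has to convert "the mobile part $M$ on the threefold $Y$ cannot behave like a negative divisor" into a clean numerical contradiction on $X$, and the cleanest route is probably via Theorem \ref{theorem-kollar} to transport nefness, combined with Lemma \ref{lemma-two-sections-fano} to get $h^0 \geq 2$ for the nef divisor $B_X$, then running the cohomology sequence above to derive a contradiction with $h^0(Y,\sO_Y(B)) = 1$. The second half is then essentially formal, using the isomorphism of Picard groups and inversion of adjunction.
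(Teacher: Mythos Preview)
Your proposal converges to the paper's argument, but with unnecessary detours and one genuine error.

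The core of the proof is exactly what the paper does: from the exact sequence
\[
0 \rightarrow \sO_X(-M_X) \rightarrow \sO_X(B_X) \rightarrow \sO_Y(B) \rightarrow 0,
\]
one has $h^0(X,\sO_X(-M_X))=0$ (since its restriction $-M$ to $Y$ would be effective, impossible as $M$ is a nonzero effective divisor), hence $h^0(X,\sO_X(B_X)) \leq h^0(Y,\sO_Y(B)) = 1$ by Fact~\ref{fact-not-moving}. Lemma~\ref{lemma-two-sections-fano} then gives non-nefness directly. Your invocation of Theorem~\ref{theorem-kollar} is a red herring: you never need to transport nefness between $X$ and $Y$, since Lemma~\ref{lemma-two-sections-fano} already lives on $X$. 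Note also your typo ``$h^0(X,\sO_X(B_X-Y)) = h^0(X,\sO_X(M_X))$'': the class is $-M_X$, as you correctly write two lines later.

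The error is in your normality argument for $B_X$. The claim ``$(X,D)$ lc with $X$ smooth implies $D$ normal crossing in codimension one hence $D$ normal if connected'' is false: a pinch point $\{x^2 = y^2 z\} \subset \mathbb{A}^3$ is an irreducible, connected, demi-normal (hence slc) surface that is not normal, and $(\mathbb{A}^3, D)$ is lc. The paper's argument is much simpler and is the one you already used for primality: $B_X$ is a hypersurface in the smooth variety $X$, hence Cohen--Macaulay, so normality is equivalent to $R_1$. If $B_X$ were not $R_1$, its singular locus would contain a surface, which the ample divisor $Y$ would meet in a curve; but then $B = B_X \cap Y$ would fail $R_1$, contradicting the standing hypothesis that $B$ is normal. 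Drop the inversion-of-adjunction paragraph entirely and run this argument instead.
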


\begin{proof}
Consider the exact sequence
$$
0 \rightarrow \sO_X(-M_X) \rightarrow \sO_X(B_X) \rightarrow \sO_Y(B) \rightarrow 0.
$$
We have $h^0(X, \sO_X(-M_X))=0$ since otherwise $-M_X$, and thus its restriction $-M$ to the general anticanonical divisor $Y$, is effective. Thus we have an injection
$$
H^0(X, \sO_X(B_X)) \hookrightarrow H^0(Y, \sO_Y(B)).
$$
Since $h^0(Y, \sO_Y(B))=1$ by Fact \ref{fact-not-moving}, we have $h^0(X, \sO_X(B_X)) \leq 1$.
By  Lemma \ref{lemma-two-sections-fano} this implies that $B_X$ is not nef.

If $B_X$ is effective, it is a normal prime divisor since its restriction to the ample divisor $Y$ is the normal prime divisor $B$.
\end{proof}

\begin{proposition} \label{proposition-adjoint-nefness}
Let $Y$ be a $\Q$-factorial threefold with canonical Gorenstein singularities such that $c_1(Y)=0$. Let $A$ be an ample Cartier divisor with $h^0(Y, \sO_Y(A)) \geq 2$ such that $\Bs{ |A|}$ has pure dimension two. Let $A \simeq M+B$ be the decomposition into mobile part and fixed part $B$. Then $A+M$ is a nef and big divisor.
\end{proposition}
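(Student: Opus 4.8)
The bigness is free: since $h^0(Y,\sO_Y(M))=h^0(Y,\sO_Y(A))\ge 2$ the divisor $M$ is effective, so $A+M$ dominates the ample divisor $A$ and is big. The whole content is the nefness of $A+M$, which I would prove by contradiction. Suppose $C\subset Y$ is an irreducible curve with $(A+M)\cdot C<0$. Since $A$ is ample, $A\cdot C\ge 1$, so $M\cdot C<-A\cdot C\le-1$; in particular $M\cdot C<0$, which forces $C$ to be contained in every member of $|M|$, i.e.\ $C\subset\Bs{|M|}$. As $|M|$ is mobile on the threefold $Y$, the locus $\Bs{|M|}$ has dimension at most one, and if it contained no curve then $M$ would be nef and $A+M$ ample, a contradiction; so $\Bs{|M|}$ contains $C$. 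Finally $\Bs{|M|}\subseteq\Bs{|A|}$, the right-hand side is pure of dimension two and the left-hand side has dimension at most one, so every one-dimensional component of $\Bs{|M|}$ — in particular $C$ — must lie in the fixed part $B$.

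Next I would pass to a general member $S\in|M|$. Since $S+B$ is a general element of $|A|$ and $Y$ is a canonical Gorenstein threefold with $c_1(Y)=0$, Theorem \ref{theorem-kawamata} gives that $(Y,S+B)$ is lc, hence by inversion of adjunction (Theorem \ref{theorem-adjunction}) the surface $S$ is reduced, has slc singularities, and $(S,B|_S)$ is slc. Because $K_Y\simeq\sO_Y$, adjunction yields $K_S\simeq\sO_S(S)\simeq M|_S$, so $A|_S\simeq K_S+B|_S$ is an ample adjoint divisor on $S$ with effective boundary; moreover $C\subset\Bs{|M|}\subseteq S$ and, on $S$, $K_S\cdot C=M|_S\cdot C=M\cdot C<-1$. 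Choosing a second general $M_2\in|M|$, which shares no component with $S$, the divisor $M_2\cap S$ is effective, linearly equivalent to $M|_S\simeq K_S$, and contains $C$; thus $C$ is a component of an effective divisor representing $K_S$ while $K_S\cdot C<-1$.

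When $S$ is normal the argument then closes at once. Adjunction along $C$ on the normal surface $S$ gives $K_S\cdot C+C^2\ge -2$, so from $K_S\cdot C<-1$ we get $C^2>-1$, hence $C^2\ge 0$ since $S$ is Gorenstein; but writing the effective divisor $M_2\cap S\simeq K_S$ as $a_0C+R$ with $a_0\ge 1$, $R\ge 0$ and $C\not\subseteq\supp R$ we then obtain $K_S\cdot C=(M_2\cap S)\cdot C=a_0C^2+R\cdot C\ge 0$, contradicting $K_S\cdot C<-1$. The point that needs care is that a general member $S\in|M|$ need not be normal: it is nodal along $C$ precisely when the base scheme of $|M|$ is non-reduced there, and one must also separately treat the case where $|M|$ is composed with a pencil, in which $S$ is a sum of surfaces with trivial canonical bundle. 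I expect this — ruling out those degenerate configurations, e.g.\ by passing to the normalisation $\nu\colon\bar S\to S$, bounding the conductor divisor via the slc condition, rerunning the estimate for a component of $\nu^{-1}(C)$, and handling the fibration case by hand — to be the only non-formal step; in all remaining situations the computation above applies verbatim.
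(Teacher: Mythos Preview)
Your opening matches the paper exactly: from $(A+M)\cdot C<0$ you get $M\cdot C\le -2$, hence $C\subset\Bs{|M|}\subset\Bs{|A|}=B$, so $C\subset M\cap B$. The divergence is in how you extract a contradiction, and you explicitly leave the proof unfinished: the non-normal case of $S\in|M|$ is not a minor technicality but the heart of the matter, and ``handling the fibration case by hand'' together with a conductor estimate is a plan, not a proof.

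The paper closes the argument in one stroke with deformation theory, and the observation you are missing is hidden in the slc condition you already invoked. Since $(Y,M+B)$ is lc, the divisor $M+B$ is slc by inversion of adjunction, hence normal crossing in codimension one. But $C$ lies in $M\cap B$, so at a general point of $C$ the two branches $M$ and $B$ meet transversally and are each \emph{smooth} there. In particular $C$ is not contained in the singular locus of $M$ --- so your feared scenario ``$S$ nodal along $C$'' simply cannot occur. Now $-K_M\cdot C=-M\cdot C\ge 2$, and Fact~\ref{fact-deformations-surface} (essentially \cite[Thm.~1.15]{Kol96} applied on a resolution) says that an integral curve not in the singular locus of a Gorenstein surface with $-K\cdot C\ge 2$ deforms in a positive-dimensional family. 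Thus $M$ contains a one-parameter family $\{C_t\}$ with $M\cdot C_t<0$; but $|M|$ is mobile on a threefold, so only finitely many curves can be $M$-negative. Contradiction.

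This replaces your intersection-theoretic computation on $S$ and eliminates all the casework you anticipated: no normalisation, no conductor bound, no separate pencil case. Your normal-case computation $C^2\ge 0$ together with $-K_S\cdot C\ge 2$ is in fact a Riemann--Roch shadow of the same deformation statement, so the two approaches are morally the same --- but the deformation formulation is robust against reducibility and non-normality of $S$ away from $C$, whereas your version would still need to justify intersection numbers on a possibly reducible or non-normal surface.
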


\begin{proof} 
We choose a general element $M$ in the mobile part, so 
by Theorem \ref{theorem-kawamata} the pair 
$(Y, M+B)$ is lc. If $A+M$ is not nef, there exists an irreducible curve $\gamma$ such that $(A+M) \cdot \gamma<0$. Since $A$ is ample, we have $M \cdot \gamma \leq -2$. 
In particular we have have $\gamma \subset \Bs{|M|}$. Since $|M|$ is the mobile part of
$|A|$ we have $\Bs{|M|} \subset \Bs{|A|}$. By assumption $\Bs{|A|}$ has pure dimension two,
so it coincides with $B$. Thus we have 
$\gamma \subset B,$
and hence 
$$
\gamma \subset M \cap B.
$$
The surface $M+B$ is slc by Theorem \ref{theorem-adjunction}, so it has normal crossing singularities in codimension one. Therefore $B$ and $M$ are smooth in the generic point of $\gamma$, in particular $\gamma$ is not contained in the singular locus of $M$. Since
$$
- K_M \cdot \gamma = - M \cdot \gamma \geq 2,
$$
we know by Fact \ref{fact-deformations-surface}
that the curve $\gamma$ deforms in $M$. In particular $M$ contains a positive-dimensional family of curves $\gamma_t$ such that $M \cdot \gamma_t<0$. Yet $M$ is a mobile divisor and $\dim Y=3$, so this is impossible.
\end{proof}

\begin{corollary} \label{corollary-adjoint-nefness}
In the situation of Setup \ref{setup},
the divisor $-K_X+M_X$ is nef and has
numerical dimension at least three. In particular
$$
H^q(X, \sO_X(M_X)) = 0 \qquad \forall \ q \geq 2.
$$
\end{corollary}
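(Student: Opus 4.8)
The plan is to deduce everything from Proposition \ref{proposition-adjoint-nefness}, applied to the general anticanonical divisor $Y$. First I would verify its hypotheses. By Theorem \ref{theorem-known} the threefold $Y$ has terminal — in particular canonical — Gorenstein singularities and is Calabi--Yau, so $c_1(Y)=0$; and by Assumption \ref{main-assumption} it is $\Q$-factorial. Put $A := -K_X|_Y$. This is an ample Cartier divisor on $Y$, and by Setup \ref{setup} we have $h^0(Y, \sO_Y(A)) = h^0(Y, \sO_Y(-K_X)) \geq 2$, while its base locus $\Bs{|A|} = \Bs{|-K_X|_Y|} = \BsAX = B$ is the irreducible surface $B$, hence has pure dimension two. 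The decomposition of $A$ into mobile and fixed part is precisely the decomposition $A \simeq M + B$ of Setup \ref{setup}. Proposition \ref{proposition-adjoint-nefness} therefore applies and shows that
$$
(-K_X + M_X)|_Y \simeq A + M
$$
is a nef and big Cartier divisor on $Y$.

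Next I would transfer nefness from $Y$ to $X$. Since $Y \in |-K_X|$ is an irreducible anticanonical divisor on the Fano fourfold $X$, Theorem \ref{theorem-kollar} says that a Cartier divisor on $X$ is nef if and only if its restriction to $Y$ is nef; applied to the class $-K_X + M_X$ this yields that $-K_X + M_X$ is nef on $X$. For the numerical dimension I would use that a nef and big Cartier divisor on the threefold $Y$ has positive top self-intersection number, so that
$$
(-K_X + M_X)^3 \cdot (-K_X) = (-K_X + M_X)^3 \cdot Y = \big( (-K_X + M_X)|_Y \big)^3 > 0 .
$$
Hence the $1$-cycle class $(-K_X + M_X)^3$ is nonzero, which is exactly the statement $\nu(X, -K_X + M_X) \geq 3$.

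Finally I would derive the vanishing from the identity $\sO_X(M_X) \simeq \sO_X(K_X) \otimes \sO_X(-K_X + M_X)$. Applying the refinement of the Kawamata--Viehweg vanishing theorem that is sensitive to the numerical dimension of a nef line bundle — if $L$ is nef on a smooth projective variety $X$, then $H^q(X, \sO_X(K_X) \otimes L) = 0$ for $q > \dim X - \nu(X, L)$ — to $L = \sO_X(-K_X + M_X)$ gives $H^q(X, \sO_X(M_X)) = 0$ for $q > 4 - 3 = 1$, i.e.\ for all $q \geq 2$, as claimed. None of the steps is genuinely difficult once Proposition \ref{proposition-adjoint-nefness} is available; the one point that must not be overlooked is that $-K_X + M_X$ need not be big on the fourfold $X$ — it only has numerical dimension three — so one really needs the numerical-dimension version of Kawamata--Viehweg vanishing rather than its classical nef-and-big form.
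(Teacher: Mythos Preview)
Your proof is correct and follows essentially the same approach as the paper: apply Proposition \ref{proposition-adjoint-nefness} to $A=-K_X|_Y$ to get nefness and bigness of $(-K_X+M_X)|_Y$, transfer nefness to $X$ via Theorem \ref{theorem-kollar}, deduce numerical dimension at least three from bigness on $Y$, and conclude the vanishing via the numerical Kawamata--Viehweg theorem. The paper's version is terser --- it simply asserts that bigness of the restriction gives numerical dimension three and cites \cite[Example 4.3.7]{Laz04a} for the vanishing --- but your more detailed verification of the hypotheses and the explicit intersection computation $(-K_X+M_X)^3\cdot(-K_X)>0$ are welcome additions rather than a different argument.
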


\begin{proof}
By Theorem \ref{theorem-kollar} we know that $-K_X+M_X$ is nef if and only if the restriction to $Y$ is nef.
Thus nefness follows from Proposition \ref{proposition-adjoint-nefness}. The numerical dimension is at least three since the restriction to $Y$ is big, so of numerical dimension three. Finally the vanishing follows from
$$
H^q(X, \sO_X(M_X)) = H^q(X, \sO_X(K_X + (-K_X+M_X)))
$$
and the numerical Kawamata-Viehweg vanishing theorem \cite[Example 4.3.7]{Laz04a}.
\end{proof}

\begin{proposition} \label{proposition-vanishing-adjoint}
In the situation of Setup \ref{setup}, assume that $M_X$ is not nef. Then we have
$$
H^q(X, \sO_X(K_X+M_X)) = 0 \qquad \forall \ q \geq 3.
$$
\end{proposition}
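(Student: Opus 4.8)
The plan is to reduce the statement to a cohomology vanishing on the general anticanonical divisor $Y$ by means of Corollary \ref{corollary-adjoint-nefness}. Since $\sO_X(-Y) \simeq \sO_X(K_X)$ and $M_X|_Y \simeq M$, twisting the ideal sheaf sequence of $Y \subset X$ by $\sO_X(M_X)$ gives
$$
0 \rightarrow \sO_X(K_X+M_X) \rightarrow \sO_X(M_X) \rightarrow \sO_Y(M) \rightarrow 0.
$$
By Corollary \ref{corollary-adjoint-nefness} we have $H^q(X, \sO_X(M_X))=0$ for all $q \geq 2$, so the long exact cohomology sequence yields isomorphisms $H^q(X, \sO_X(K_X+M_X)) \simeq H^{q-1}(Y, \sO_Y(M))$ for every $q \geq 3$. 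The case $q=4$ is then immediate: $Y$ is Gorenstein with $\omega_Y \simeq \sO_Y$, hence Serre duality gives $H^3(Y, \sO_Y(M)) \simeq H^0(Y, \sO_Y(-M))^{*}=0$ because $M$ is effective and nonzero (recall $h^0(Y, \sO_Y(M)) \geq 2$). Thus the whole statement reduces to proving $H^2(Y, \sO_Y(M))=0$.

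To deal with $q=3$ I would pass to a general member $M_0 \in |M|$, which is reduced by Bertini and, being a Cartier divisor on the Gorenstein threefold $Y$ with trivial dualising sheaf, satisfies $\omega_{M_0} \simeq \sO_{M_0}(M_0)$ by adjunction. The sequence $0 \rightarrow \sO_Y \rightarrow \sO_Y(M_0) \rightarrow \omega_{M_0} \rightarrow 0$, combined with $H^1(Y, \sO_Y)=H^2(Y, \sO_Y)=0$, $H^3(Y, \sO_Y) \simeq H^0(Y, \sO_Y)^{*} \simeq \C$, $H^3(Y, \sO_Y(M_0)) \simeq H^0(Y, \sO_Y(-M_0))^{*}=0$ and $H^2(M_0, \omega_{M_0}) \simeq H^0(M_0, \sO_{M_0})^{*}$, shows that $\dim H^2(Y, \sO_Y(M))$ equals the number of connected components of $M_0$ minus one. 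So everything comes down to the claim that a general $M_0 \in |M|$ is connected.

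This connectedness is where the hypothesis that $M_X$ is not nef enters, and I expect it to be the hard part. If $M_0$ were disconnected then, since $|M|$ has no fixed part, the system $|M|$ would be composed with a pencil. If that pencil is base-point free it defines a fibration $g \colon Y \rightarrow C$ onto a smooth curve with $M \simeq g^{*}D$ for an effective divisor $D$ on $C$, hence $M$ is nef; then $M_X$ is nef by Theorem \ref{theorem-kollar}, contradicting the hypothesis. The delicate case is a pencil with non-empty base locus: every member of the pencil contains that base locus, so if $|M|$ is the sum of at least two copies of it the general member is automatically connected, and one is left with showing that a general member of the pencil itself is connected — here one can exploit that $\Bs{|M|} \subset B$ with $B$ an irreducible surface. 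In every case $M_0$ is connected, whence $H^2(Y, \sO_Y(M))=0$ and, by the reduction above, $H^q(X, \sO_X(K_X+M_X))=0$ for all $q \geq 3$.

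Apart from Step~3 the argument is purely formal manipulation of exact sequences and Corollary \ref{corollary-adjoint-nefness}; the one real obstacle is the connectedness of a general member of $|M|$. It may be more transparent to phrase the crux on $B$ instead: via $0 \rightarrow \sO_Y(M) \rightarrow \sO_Y(-K_X|_Y) \rightarrow \sO_B(-K_X|_B) \rightarrow 0$ and Kawamata--Viehweg vanishing on the Calabi--Yau threefold $Y$, the required vanishing $H^2(Y, \sO_Y(M))=0$ is equivalent to the Kodaira-type vanishing $H^1(B, \sO_B(-K_X|_B))=0$ on the log canonical surface $B$, and since $B$ is irreducible this is in turn equivalent to the connectedness of $M \cap B$.
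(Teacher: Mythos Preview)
Your reduction via the ideal sequence of $Y$ and Corollary \ref{corollary-adjoint-nefness} is exactly what the paper does, and your treatment of $q=4$ is fine. The issue is the vanishing $H^2(Y,\sO_Y(M))=0$.

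Your connectedness argument does not close. The only case that matters is precisely the one you call ``delicate'': a mobile pencil $|M|$ with nonempty base locus and $h^0(Y,\sO_Y(M))=2$. This case is \emph{not} excluded by the hypotheses (from $h^0(X,\sO_X(-K_X))\geq 3$ one only gets $h^0(Y,\sO_Y(-K_X))\geq 2$, hence $h^0(Y,\sO_Y(M))\geq 2$), and you give no mechanism that forces a general member of such a pencil to be connected. Knowing $\Bs|M|\subset B$ with $B$ irreducible does not help: the base locus of a pencil on a threefold is a (possibly reducible) curve, and distinct connected components of $M_0$ could pass through distinct components of that curve. Your alternative phrasing on $B$ is a correct reformulation ($H^2(Y,\sO_Y(M))\simeq H^1(B,\sO_B(-K_X|_B))$ via the sequence you wrote and Kodaira vanishing on $Y$), but you cannot conclude from ``Kodaira-type vanishing'' on $B$: the surface $B$ is only known to be lc at this stage (Corollary \ref{corollary-restrictions-singularities}), and Kodaira vanishing fails in general for lc surfaces. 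The further claimed equivalence with connectedness of $M\cap B$ is not established either.

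The paper bypasses all of this with a single stroke: it invokes Lemma \ref{lemma-kuronya} (a K\"uronya-type vanishing). Since $M$ is mobile, its restriction to each irreducible component is pseudoeffective; since $M_X$ is not nef, neither is $M$ (Theorem \ref{theorem-kollar}), so the exceptional ``nef of numerical dimension $\leq 1$'' clause is irrelevant. This yields $H^q(Y,\sO_Y(M))=0$ for $q\geq 2$ directly, and then the same exact sequence gives $H^q(X,\sO_X(K_X+M_X))\simeq H^q(X,\sO_X(M_X))=0$ for $q\geq 3$. The key input you are missing is thus Lemma \ref{lemma-kuronya}; once you have it, the connectedness discussion becomes unnecessary.
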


For the proof we start with a general lemma which is essentially contained in K{\"u}ronya's paper \cite{Kur13}:

\begin{lemma} \label{lemma-kuronya}
Let $Y$ be a normal projective threefold with canonical Gorenstein singularities, and let $D$
be an effective Cartier divisor on $Y$ such that for every irreducible component $D' \subset D$, the restriction $D|_{D'}$ is pseudoeffective. Then 
$$
H^q(Y, \sO_Y(K_Y+D)) = 0 \qquad \forall \ q \geq 2
$$
unless (maybe) $D$ is nef of numerical dimension at most one.
\end{lemma}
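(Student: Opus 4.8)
The plan is to reduce the statement to a vanishing theorem of K\"uronya, handling the nef case separately. First, the zero divisor is nef of numerical dimension zero, so we may assume $D\neq 0$; since $Y$ is Gorenstein, Serre duality then gives $H^3(Y,\sO_Y(K_Y+D))\simeq H^0(Y,\sO_Y(-D))^\vee=0$, so the only thing left to prove is $H^2(Y,\sO_Y(K_Y+D))=0$ whenever $D$ is not nef of numerical dimension at most one.

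Next I would pass to a resolution $\holom{\pi}{Y'}{Y}$. Since Gorenstein canonical singularities are rational \cite[Cor.~5.24]{KM98}, Grauert--Riemenschneider vanishing and the projection formula give $R^q\pi_*\sO_{Y'}(K_{Y'}+\pi^*D)=0$ for $q>0$ and $\pi_*\sO_{Y'}(K_{Y'}+\pi^*D)\simeq\sO_Y(K_Y+D)$, so by the Leray spectral sequence $H^q(Y,\sO_Y(K_Y+D))\simeq H^q(Y',\sO_{Y'}(K_{Y'}+\pi^*D))$ for all $q$. The divisor $\pi^*D$ is again effective, it is nef if and only if $D$ is and its numerical dimension equals that of $D$, and for a component $D_i$ of $D$ with strict transform $\tilde D_i$ one has $(\pi^*D)|_{\tilde D_i}=(\pi|_{\tilde D_i})^*(D|_{D_i})$, which is again pseudoeffective. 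The remaining components of $\pi^*D$ are $\pi$-exceptional divisors mapping into $Y_{\sing}\cap\supp D$; their restricted classes are pull-backs of $\sO_Y(D)$ restricted to the (lower-dimensional) image, hence pseudoeffective as soon as that image is a point --- which is automatic when $Y$ has isolated singularities, the situation relevant to the applications, and needs a short extra argument otherwise. So I may assume that $Y$ is a smooth projective threefold.

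Then I would distinguish whether $D$ is nef. If $D$ is nef, then $\nu(Y,D)\geq 2$ by hypothesis, and the numerical form of the Kawamata--Viehweg vanishing theorem \cite[Example~4.3.7]{Laz04a} gives $H^q(Y,\sO_Y(K_Y+D))=0$ for $q>\dim Y-\nu(Y,D)$, in particular for $q\geq 2$. The substance of the lemma is the case where $D$ is not nef, and here I would follow K\"uronya's argument \cite{Kur13}: one peels off a prime component $P$ of $D$ by means of the restriction sequence
$$
0\lra\sO_Y(K_Y+D-P)\lra\sO_Y(K_Y+D)\lra\sO_P\big(K_P+(D-P)|_P\big)\lra 0,
$$
the last term being identified by adjunction on the Cartier divisor $P$. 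Since $P$ is a component of $D$, the class $D|_P$ --- and hence also $(D-P)|_P$ --- is pseudoeffective, so on the surface $P$ Serre duality identifies the relevant top cohomology with $H^0\big(P,\sO_P(-(D-P)|_P)\big)^\vee$, which therefore vanishes unless $\sO_Y(D-P)|_P$ is trivial; tracking these degeneracies --- whose upshot is that a relevant surface becomes an Albanese fibre, hence nef of numerical dimension at most one --- is exactly what produces the exceptional case in the statement. Feeding the vanishing on $P$ into the long exact sequence and inducting on the number of prime components of $D$ --- with base case $D$ irreducible, which comes down to the injectivity of $H^1(Y,\sO_Y)\to H^1(D,\sO_D)$ --- then yields the claim.

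The main obstacle is this non-nef case: organising the induction so that the hypotheses are preserved when passing from $D$ to $D-P$ --- note that $(D-P)|_{D_i}$ need not be pseudoeffective for components $D_i\neq P$, so one must work with a suitably strengthened statement --- and isolating exactly the degenerate configurations via an Albanese-type analysis of the surfaces that occur; this is the content of \cite{Kur13}. A secondary technical point is the preservation of the hypotheses under the reduction to a resolution when $Y$ has non-isolated singularities.
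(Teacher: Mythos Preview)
Your approach diverges substantially from the paper's, and the divergence creates real problems.

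The paper's proof is a two-line reduction to K\"uronya's Theorem~C as a black box, not a reconstruction of his induction. The key observation is that any curve $\gamma$ with $D\cdot\gamma<0$ must lie in some component $D'$ of $D$; since $D|_{D'}$ is pseudoeffective on the surface $D'$, there are only finitely many such curves. Hence for a \emph{general very ample} surface $A\subset Y$ the restriction $D|_A$ is nef, and
\[
(D|_A)^2 \;=\; D^2\cdot A \;=\; (D|_D)\cdot(A|_D)\;\geq\;0,
\]
again by pseudoeffectivity of $D|_{D'}$ and ampleness of $A|_{D'}$. If this number is positive, $D|_A$ is nef and big and \cite[Thm.~C]{Kur13} gives the vanishing directly (the paper notes in a footnote that K\"uronya's proof goes through for canonical Gorenstein $Y$ without change). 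If it is zero, then $D|_D\equiv 0$, forcing $D$ nef of numerical dimension $\leq 1$---the excluded case. No resolution, no peeling, no Albanese analysis.

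Your route has two genuine gaps. First, the reduction to a resolution: canonical Gorenstein threefolds can have non-isolated singularities (e.g.\ a curve of $A_1$ points), and over such a curve $C$ an exceptional component $E$ of $\pi^*D$ satisfies $(\pi^*D)|_E\simeq (\pi|_E)^*(\sO_Y(D)|_C)$, which is pseudoeffective only if $\deg(D|_C)\geq 0$; nothing in the hypotheses guarantees this. Your aside that this ``needs a short extra argument'' is optimistic. Second, the peeling induction: you correctly note that $(D-P)|_{D_i}$ need not stay pseudoeffective, so the hypothesis is not inductive; the ``suitably strengthened statement'' you allude to is doing all the work and is never formulated. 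Your base case also relies on injectivity of $H^1(Y,\sO_Y)\to H^1(D,\sO_D)$, which is not available for a general $Y$ in the lemma (it happens to hold in the paper's application because $H^1(Y,\sO_Y)=0$, but the lemma is stated more broadly).

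In short: restrict to a general very ample hypersurface rather than to components of $D$. That single change dissolves both obstacles and lets you cite K\"uronya directly.
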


\begin{proof}
Since the restriction $D|_{D'}$ is pseudoeffective and $\dim D'=2$ there are at most finitely many curves $\gamma \subset Y$ such that $D \cdot \gamma<0$. In particular for a general very ample divisor $A \subset Y$, the restriction $D|_A$ is nef. Moreover
$$
(D|_A)^2 = D^2 \cdot A = D|_D \cdot A|_D \geq 0
$$
and equality holds if and only if the pseudoeffective class $D|_D$ is zero.
Hence if $(D|_A)^2 =0$, the restriction $D|_D$ is numerically trivial and $D$ is nef with numerical dimension at most one. 
If $(D|_A)^2 >0$ the restriction $D|_A$ is nef and big and we obtain the vanishing by \cite[Thm.C]{Kur13}\footnote{The statement in \cite{Kur13} is for a manifold, but it is straightforward to see that the proof works in our setup.}.
\end{proof}

\begin{proof}[Proof of Proposition \ref{proposition-vanishing-adjoint}]
We twist the exact sequence
$$
0 \rightarrow \sO_X(-Y) \simeq \sO_X(K_X) \rightarrow \sO_X \rightarrow \sO_Y \rightarrow 0
$$
with $M_X$ to get
$$
0 \rightarrow \sO_X(K_X+M_X) \rightarrow \sO_X(M_X) \rightarrow \sO_Y(M) \rightarrow 0.
$$
Since $M$ is mobile, it satisfies the conditions of Lemma \ref{lemma-kuronya}; by assumption $M_X$ (and thus $M$) is not nef, so we 
have $H^q(Y, \sO_Y(M))=0$ for $q \geq 2$. By the exact sequence in cohomology this implies
$$
H^q(X, \sO_X(K_X+M_X)) = H^q(X, \sO_X(M_X)) \qquad \forall \ q \geq 3.
$$
Yet the right hand side vanishes by Corollary \ref{corollary-adjoint-nefness}.
\end{proof}

\begin{corollary} \label{corollary-BX-effective-general}
In the situation of Setup \ref{setup}, assume that $M_X$ is not nef.
Then the divisor class $B_X$ is effective.
\end{corollary}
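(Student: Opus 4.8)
The plan is to deduce the statement formally from the vanishing theorem of Proposition \ref{proposition-vanishing-adjoint}, together with Serre duality, via a single short exact sequence on $X$.

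\textbf{Step 1: reduce to a vanishing.} First I would record that in the situation of Setup \ref{setup} the Grothendieck--Lefschetz isomorphism $\pic(X) \cong \pic(Y)$, applied to $-K_X|_Y \simeq M + B \simeq (M_X + B_X)|_Y$, promotes the linear equivalence $-K_X \simeq M_X + B_X$ from $Y$ to $X$; hence $B_X + K_X \simeq -M_X$. Twisting the structure sequence
$$
0 \to \sO_X(K_X) \simeq \sO_X(-Y) \to \sO_X \to \sO_Y \to 0
$$
by $\sO_X(B_X)$ and using $B_X|_Y \simeq B$ then gives
$$
0 \to \sO_X(-M_X) \to \sO_X(B_X) \to \sO_Y(B) \to 0.
$$
Since $B$ is effective on $Y$ we have $h^0(Y, \sO_Y(B)) \geq 1$ (in fact $=1$ by Fact \ref{fact-not-moving}), so it suffices to show that the restriction map $H^0(X, \sO_X(B_X)) \to H^0(Y, \sO_Y(B))$ is surjective; by the long exact cohomology sequence this reduces to proving $H^1(X, \sO_X(-M_X)) = 0$.

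\textbf{Step 2: the vanishing.} Next I would invoke Serre duality on the smooth projective fourfold $X$:
$$
H^1(X, \sO_X(-M_X)) \cong H^3(X, \sO_X(K_X + M_X))^{\vee},
$$
and the right-hand side vanishes by Proposition \ref{proposition-vanishing-adjoint}, precisely because we are in the case $M_X$ not nef. Thus $h^0(X, \sO_X(B_X)) \geq h^0(Y, \sO_Y(B)) \geq 1$, i.e.\ $B_X$ is effective. I would also remark that $h^0(X, \sO_X(-M_X)) = 0$ (as in the proof of Proposition \ref{proposition-B-not-nef}), so in fact $h^0(X, \sO_X(B_X)) = 1$, consistently with that proposition.

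\textbf{Main obstacle.} Honestly there is no real obstacle left here: the corollary is a formal consequence of Proposition \ref{proposition-vanishing-adjoint}, which is where all the genuine work (via the K\"uronya-type Lemma \ref{lemma-kuronya} and Corollary \ref{corollary-adjoint-nefness}) already took place. The only points requiring a little care are the identification $B_X - Y \simeq -M_X$, which rests on promoting $-K_X \simeq M_X + B_X$ from $Y$ to $X$, and the bookkeeping of degrees in Serre duality on a fourfold, so that it is exactly $H^3$ of $\sO_X(K_X+M_X)$ — and not some other cohomology group — that one needs to kill.
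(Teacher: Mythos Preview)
Your proof is correct and follows essentially the same route as the paper's: the same twisted restriction sequence $0 \to \sO_X(-M_X) \to \sO_X(B_X) \to \sO_Y(B) \to 0$, Serre duality to identify $H^1(X,\sO_X(-M_X))$ with $H^3(X,\sO_X(K_X+M_X))$, and Proposition \ref{proposition-vanishing-adjoint} to kill the latter. Your extra remarks (the Lefschetz justification for $-K_X \simeq M_X + B_X$ on $X$, and the observation that in fact $h^0(X,\sO_X(B_X))=1$) are correct elaborations that the paper leaves implicit.
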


\begin{proof}
We twist the restriction sequence
$$
0 \rightarrow \sO_X(-Y) \simeq \sO_X(-(M_X+B_X)) \rightarrow \sO_X \rightarrow \sO_Y \rightarrow 0
$$
with $B_X$ to get
$$
0 \rightarrow \sO_X(-M_X) \rightarrow \sO_X(B_X) \rightarrow \sO_Y(B) \rightarrow 0.
$$
By Serre duality and Proposition \ref{proposition-vanishing-adjoint} one has
$$
H^1(X, \sO_X(-M_X)) = H^3(X, \sO_X(K_X+M_X)) = 0.
$$
Therefore $h^0(Y, \sO_Y(B))=1$ implies that $B_X$
is effective. 
\end{proof}

\begin{proposition} \label{proposition-MX-effective}
In the situation of Setup \ref{setup}, the divisor class $M_X$ is effective and mobile.
\end{proposition}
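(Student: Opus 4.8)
The plan is to split according to the positivity of $M_X$ and to reduce everything, apart from one easy sub-case, to the assertion that the class $B_X$ is effective; once $B_X$ is represented by an effective (hence, by Proposition \ref{proposition-B-not-nef}, normal and prime) divisor, the equality $B_X \cap Y = B$ promotes the two-dimensional base locus of $|-K_X|$ to a \emph{divisorial} fixed component of the restriction of $|-K_X|$ to the threefold $B_X$, and this is exactly what produces sections of $M_X = -K_X-B_X$. Write $N := h^0(X, \sO_X(-K_X)) \geq 3$. One first records that $-K_X \simeq M_X + B_X$ on $X$ (from $-K_X|_Y \simeq M+B \simeq (M_X+B_X)|_Y$ and the isomorphism $\pic(X) \cong \pic(Y)$), and that $M_X \neq 0$ (otherwise $M = M_X|_Y = 0$, contradicting that $M$ is the non-trivial mobile part of $|-K_X|_Y|$). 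Now: if $M_X$ is not nef, then $B_X$ is effective by Corollary \ref{corollary-BX-effective-general}. If $M_X$ is nef with $\nu(X, M_X) \geq 2$, then the numerical Kawamata--Viehweg vanishing already invoked for Corollary \ref{corollary-adjoint-nefness} gives $H^3(X, \sO_X(K_X+M_X)) = 0$, hence $H^1(X, \sO_X(-M_X)) = 0$ by Serre duality; feeding this into the twist by $B_X$ of the structure sequence of $Y$, namely $0 \to \sO_X(-M_X) \to \sO_X(B_X) \to \sO_Y(B) \to 0$, together with $h^0(Y, \sO_Y(B)) = 1$ (Fact \ref{fact-not-moving}), yields $h^0(X, \sO_X(B_X)) = 1$, so again $B_X$ is effective. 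Finally, if $M_X$ is nef with $\nu(X, M_X) = 1$, then Fact \ref{fact-bpf-application} gives $\holom{\varphi}{X}{T}$ with connected fibres and $M_X \simeq \varphi^* H_T$ with $H_T$ ample; the numerical dimension forces $\dim T = 1$, and since $X$ is rationally connected $T \simeq \PP^1$, so $M_X \simeq \varphi^* \sO_{\PP^1}(d)$ with $d \geq 1$, whence $h^0(X, \sO_X(M_X)) = h^0(\PP^1, \sO_{\PP^1}(d)) = d+1 \geq 2$ and $|M_X| = \varphi^*|\sO_{\PP^1}(d)|$ is base-point free; in this case $M_X$ is effective and mobile and we are done.

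It then remains to run the main argument under the assumption that $B_X$ is effective. By Proposition \ref{proposition-B-not-nef} the divisor $B_X$ is normal and prime, and $B_X|_Y$ is an effective divisor in the class of $B$ with $h^0(Y, \sO_Y(B)) = 1$, so $B_X \cap Y = B$; since $Y \simeq -K_X$ this gives $-K_X|_{B_X} \simeq B$ as Cartier classes on $B_X$. Because $\BsAX = B$ has dimension two, there exists $D \in \AX$ not containing the threefold $B_X$; for such a $D$ the restriction $D|_{B_X}$ is effective, lies in the class of $B$, and contains $B$ (since $B = \BsAX \subseteq D$, $B \subseteq B_X$, and $B_X$ is smooth at the generic point of $B$ by normality), so $D|_{B_X}-B$ is an effective divisor in the trivial class on the integral variety $B_X$, forcing $D|_{B_X} = B$. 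Consequently the restriction map $\rho \colon H^0(X, \sO_X(-K_X)) \to H^0(B_X, \sO_{B_X}(-K_X))$ sends every section to a multiple of the section cutting out $B$, so $\dim(\im \rho) = 1$, and the exact sequence $0 \to \sO_X(M_X) \to \sO_X(-K_X) \to \sO_{B_X}(-K_X) \to 0$ gives $h^0(X, \sO_X(M_X)) = N-1 \geq 2$; in particular $M_X$ is effective. For mobility, the restriction $H^0(X, \sO_X(M_X)) \to H^0(Y, \sO_Y(M))$ has kernel $H^0(X, \sO_X(M_X+K_X)) = H^0(X, \sO_X(-B_X)) = 0$ and both spaces have dimension $N-1$, so it is an isomorphism; were $|M_X|$ to have a non-zero fixed component $F$, then $F|_Y$ would be a non-zero fixed component of $|M|$ — here one uses that a general $Y$ contains no prime divisor, as $\BsAX$ has dimension two — contradicting that $M$ is the mobile part of $|-K_X|_Y|$.

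The step I expect to be the main obstacle is proving that $B_X$ is effective, and the accompanying bookkeeping leading to $B_X \cap Y = B$ and to the multiplicity estimate $D|_{B_X} \geq B$: this is precisely what converts a small (codimension two) base locus inside $X$ into a divisorial base component on the threefold $B_X$. The separate, elementary treatment of the case $\nu(X, M_X) = 1$ appears unavoidable, since there numerical Kawamata--Viehweg vanishing is too weak to supply the vanishing $H^1(X, \sO_X(-M_X)) = 0$ that drives the uniform argument.
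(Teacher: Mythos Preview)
Your proof is correct, and the route differs from the paper's in an interesting way.

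The paper handles the nef case in one line by citing Lemma~\ref{lemma-two-sections-fano} (note the Remark after the proposition: in the nef case the paper only claims the \emph{class} is mobile, explicitly allowing $|M_X|$ to have fixed components). In the non-nef case the paper, having $B_X$ effective, proves the cohomological vanishing $H^1(X,\sO_X(-B_X))=0$ by computing $H^3(X,\sO_X(K_X+B_X))$ via the ideal sequence of $B_X$ twisted by $K_X+B_X$ and Serre duality; this gives surjectivity of $H^0(X,\sO_X(M_X))\to H^0(Y,\sO_Y(M))$ and hence $\Bs{|M_X|}\cap Y=\Bs{|M|}$, so $|M_X|$ is mobile. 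Your argument bypasses this vanishing entirely: once $B_X$ is effective you observe directly that every $D\in|-K_X|$ not containing $B_X$ satisfies $D|_{B_X}=B$, so the restriction $H^0(X,\sO_X(-K_X))\to H^0(B_X,\sO_{B_X}(-K_X))$ has rank exactly one, whence $h^0(X,\sO_X(M_X))=N-1$ from the kernel of the ideal sequence of $B_X$; mobility then follows from the easy \emph{injectivity} of restriction to $Y$ (kernel $H^0(X,\sO_X(-B_X))=0$) together with the dimension match. Your geometric observation $D|_{B_X}=B$ is in spirit a cheap version of the later statement $h^0(B_X,\sO_{B_X}(-K_X))=1$ in Proposition~\ref{proposition-injection}, obtained here without Fujino's extension theorem because only the image of $\rho$ matters. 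The trade-off: the paper's nef case is shorter but proves less, while your case split $\nu=1$ versus $\nu\geq 2$ costs a little extra bookkeeping but yields that $|M_X|$ has no fixed component even when $M_X$ is nef of numerical dimension $\geq 2$.
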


\begin{remark*}
If $M_X$ is nef, the linear system $|M_X|$ might have fixed components. Nevertheless the class being nef, it is mobile.
If $M_X$ is not nef the proof will show that 
$$
h^0(X, \sO_X(M_X)) = h^0(X, \sO_X(-K_X))-1. 
$$
\end{remark*}

\begin{proof}
If $M_X$ is nef, this is immediate from Lemma \ref{lemma-two-sections-fano}.
If $M_X$ is not nef we know by Corollary \ref{corollary-BX-effective-general} 
that $B_X$ is effective.
Thus $B_X$ is normal prime divisor by Proposition \ref{proposition-B-not-nef} and we can twist the restriction sequence for $B_X$ by $K_X+B_X$ to get an exact sequence
$$
0 \rightarrow \sO_X(K_X) \rightarrow \sO_X(K_X+B_X) \rightarrow \sO_{B_X}(K_X+B_X) \simeq
\sO_{B_X}(K_{B_X}) \rightarrow 0. 
$$
Taking cohomology we get a sequence
\begin{multline*}
\ldots \rightarrow H^3(X, \sO_X(K_X)) \rightarrow H^3(X, \sO_X(K_X+B_X)) \rightarrow H^3(B_X, 
\sO_{B_X}(K_{B_X}))  \\ 
\rightarrow H^4(X, \sO_X(K_X)) \rightarrow H^4(X, \sO_X(K_X+B_X)) 
\rightarrow 0
\end{multline*}
Using Serre duality on $X$ and $B_X$ this transforms into
\begin{multline*}
\ldots \rightarrow H^1(X, \sO_X)=0 \rightarrow H^3(X, \sO_X(K_X+B_X)) \rightarrow H^0(B_X, 
\sO_{B_X})=\C  \\ 
\rightarrow H^0(X, \sO_X)=\C \rightarrow H^0(X, \sO_X(-B_X))=0 
\rightarrow 0
\end{multline*}
In conclusion we get
$$
H^q(X, \sO_X(K_X+B_X))=0 \qquad \forall \ q \geq 3.
$$
Now we twist the restriction sequence to $Y$ by $M_X$ to get
$$
0 \rightarrow \sO_X(-B_X) \rightarrow \sO_X(M_X) \rightarrow \sO_Y(M) \rightarrow 0.
$$
By Serre duality and what precedes
$$
H^1(X, \sO_X(-B_X)) = H^3(X, \sO_X(K_X+B_X)) = 0.
$$
Thus the restriction map
$$
H^0(X, \sO_X(M_X)) \rightarrow H^0(Y, \sO_Y(M))
$$
is surjective. In particular 
$$
\Bs{|M|} = \Bs{|M_X|} \cap Y.
$$
Since $Y$ is ample and $M$ is mobile, this shows that $\dim \Bs{|M_X|} \leq 2$.
Thus $|M_X|$ is mobile. 
\end{proof}

The divisor $M_X$ being effective by Proposition \ref{proposition-MX-effective}, the divisor class $-K_X+M_X$ is big. Since $-K_X+M_X$ is also nef by Corollary \ref{corollary-adjoint-nefness} we finally obtain:

\begin{corollary} \label{corollary-adjoint-nef-big} 
In the situation of Setup \ref{setup}, the divisor $-K_X+M_X$ is nef and big.
\end{corollary}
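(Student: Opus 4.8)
The plan is to assemble the two positivity statements already in hand. First I would recall that $X$ is Fano, so $-K_X$ is ample, in particular big. By Proposition \ref{proposition-MX-effective} the divisor class $M_X$ is effective. Hence $-K_X + M_X$ is of the shape (ample) $+$ (effective), which is precisely the standard characterisation of a big divisor \cite{Laz04a}; so $-K_X + M_X$ is big.

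For nefness I would simply invoke Corollary \ref{corollary-adjoint-nefness}, which already asserts that $-K_X + M_X$ is nef (indeed of numerical dimension at least three). Combining the two observations, $-K_X + M_X$ is nef and big, which is the assertion.

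There is essentially no obstacle here: all the substance has been carried out upstream. The effectivity of $M_X$ in Proposition \ref{proposition-MX-effective} rests on the vanishing and extension arguments of Section \ref{section-positivity}, while the nefness recorded in Corollary \ref{corollary-adjoint-nefness} reduces, via Kollár's Theorem \ref{theorem-kollar}, to the nefness of the restriction to $Y$, which is handled by Proposition \ref{proposition-adjoint-nefness}. The present corollary is therefore just the bookkeeping step that packages "nef and big" in the form needed in the remainder of the paper.
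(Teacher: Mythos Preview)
Your proof is correct and follows essentially the same route as the paper: effectivity of $M_X$ from Proposition \ref{proposition-MX-effective} combined with the ampleness of $-K_X$ gives bigness, and nefness is taken directly from Corollary \ref{corollary-adjoint-nefness}. The paper states exactly this in the sentence preceding the corollary.
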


\begin{proposition} \label{proposition-BX-lc}
In the situation of Setup \ref{setup}, assume that $h^0(X, \sO_X(B_X)) \neq 0$.
Then the pair $(X, B_X)$ is log-canonical. Moreover
the log-canonical centres are the prime divisor
$B_{X}$ and (possibly) some smooth curves $C \subset B_X$
such that $B_X \cdot C<0$.
\end{proposition}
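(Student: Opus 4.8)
The plan is to reduce the log-canonicity of $(X, B_X)$ to the already-established log-canonicity of $(Y, B)$ via inversion of adjunction, and then to control the non-klt locus using the negativity of $B_X$ on $Y$. First I would recall from Proposition \ref{proposition-B-not-nef} that, since $h^0(X, \sO_X(B_X)) \neq 0$, the divisor $B_X$ is a normal prime divisor, and from Corollary \ref{corollary-restrictions-singularities} that $(Y, B)$ is lc; moreover $B = B_X \cap Y$ since the linear equivalence classes match and $B$ is the restriction of the class $B_X$. Now apply inversion of adjunction (Theorem \ref{theorem-adjunction}) with $S = Y \in |-K_X|$, which is Cartier, and $\Delta = B_X$: the pair $(X, Y + B_X)$ is lc near $Y$ if and only if $(Y, B_X|_Y) = (Y, B)$ is slc, which holds. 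So $(X, Y + B_X)$, hence also $(X, B_X)$, is lc in a neighbourhood of $Y$.

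To upgrade "lc near $Y$" to "lc everywhere", I would invoke Lemma \ref{lemma-pair-lc}: taking $L = \sO_X(B_X)$ and $D = B_X$ (note $B_X$ is the unique effective divisor in its class by Proposition \ref{proposition-B-not-nef}, so "general" is vacuous here), we have just shown that $(Y, D \cap Y) = (Y, B)$ is lc, and Lemma \ref{lemma-pair-lc} then yields that $(X, B_X)$ is lc globally. Here one must check that the hypotheses of Lemma \ref{lemma-pair-lc} are genuinely met: that $h^0(X, L) \neq 0$ (assumed) and that $Y$ and $D$ share no component (true since $Y$ is general in a system with no fixed component and $B_X \neq Y$, as $B_X$ is not nef while $-K_X$ is ample).

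For the description of the log-canonical centres, I would argue as follows. Since $B_X$ is a normal prime divisor and $(X, B_X)$ is lc but plainly not klt along $B_X$ (the divisor appears with coefficient one), $B_X$ itself is an lc centre. Any other lc centre is contained in $B_X$, and by adjunction $(B_X, \mathrm{Diff})$ — or more concretely, the lc centres of $(X, B_X)$ strictly inside $B_X$ — correspond to the non-klt locus of the pair obtained by adjunction to $B_X$, i.e. to the locus where $(B_X, 0)$ together with the different fails to be klt. Since $K_X + B_X$ restricts to $K_{B_X}$ and $B_X$ has at worst the singularities forced by $(X,B_X)$ lc (so $B_X$ is lc, even klt away from the extra centres), the remaining lc centres are exactly the curves along which $B_X|_{B_X}$ is sufficiently negative; by the genus/arithmetic estimates already used in Lemma \ref{lemma-configuration} and Fact \ref{fact-deformations-surface}, such curves $C$ must satisfy $B_X \cdot C < 0$ and are forced to be smooth rational (a curve of higher genus with $N^*_{C/B_X}$ positive enough to be an lc centre would contradict the adjunction inequality $\deg K_C \le (K_{B_X} + C)\cdot C$). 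The main obstacle I expect is precisely this last point: extracting from "lc centre of $(X, B_X)$ contained in $B_X$" the clean numerical characterization $B_X \cdot C < 0$ together with smoothness of $C$, which requires carefully running adjunction from $X$ to $B_X$ and analysing the resulting different, rather than any single cited theorem. Everything upstream is a routine assembly of inversion of adjunction (Theorem \ref{theorem-adjunction}) and Lemma \ref{lemma-pair-lc}.
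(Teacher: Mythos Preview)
Your argument that $(X,B_X)$ is lc is essentially the paper's: both rely on Lemma \ref{lemma-pair-lc} applied with $L=\sO_X(B_X)$ and $D=B_X$ (the unique effective member, hence ``general''). Your preliminary use of inversion of adjunction to get ``lc near $Y$'' is redundant, since Lemma \ref{lemma-pair-lc} already delivers the global statement, but it does no harm.

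The description of the lc centres, however, has a genuine gap. Your sketch --- passing to the different on $B_X$ and invoking Lemma \ref{lemma-configuration} and Fact \ref{fact-deformations-surface} --- does not control the situation: those statements concern specific configurations on surfaces and deformation of curves, and they do not yield either the exclusion of $0$-dimensional lc centres or the inequality $B_X\cdot C<0$. You also never explain why no $2$-dimensional centres occur (this follows from normality of $B_X$, but you should say so), nor why the curves are smooth. The paper's mechanism is entirely different and is the missing idea: since $B_X-(K_X+B_X)=-K_X$ is ample, Fujino's extension theorem \cite[Thm.8.1]{Fuj11} forces $H^0(Z,\sO_Z(B_X))=0$ for every lc centre $Z$. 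This single vanishing rules out points (where $\sO_Z(B_X)$ is trivially globally generated) and, for a $1$-dimensional minimal centre $Z$, combines with Kawamata subadjunction $K_Z+\Delta_Z\sim_{\Q}(K_X+B_X)|_Z$ (which also gives normality, hence smoothness, of $Z$) to show via Riemann--Roch that $B_X\cdot Z<0$. Without this extension-theorem step your proposal does not close.
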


\begin{proof}
We have $h^0(X, \sO_X(B_X)) = 1$ by Proposition \ref{proposition-B-not-nef}.
Thus $B_X$ is the unique effective divisor in its linear system, hence general. Since $(Y, B)
=(Y, Y \cap B_X)$ is log-canonical by Corollary \ref{corollary-restrictions-singularities}, we know by Lemma \ref{lemma-pair-lc}
that $(X, B_X)$ is lc.

Let us now describe its lc centres: since $B_X-(K_X+B_X)=-K_X$ is ample
we know by \cite[Thm.8.1]{Fuj11}
that for every lc centre $Z \subset X$ the restriction morphism
$$
H^0(X, \sO_X(B_X)) \rightarrow H^0(Z, \sO_Z(B_X)) 
$$
is surjective. Since $Z \subset B_X$ and $h^0(X, \sO_X(B_X))=1$ we obtain that $H^0(Z, \sO_Z(B_X))=0$.

This immediately excludes the possibility that $\dim Z=0$. 
Since $B_X$ is normal by Proposition \ref{proposition-B-not-nef}, there are no two-dimensional lc centres.

The unique remaining possibility is that $\dim Z=1$ and the centre is minimal. Thus 
by Kawamata subadjunction \cite{Kaw98} the curve $Z$ is normal and  there exist an effective divisor $\Delta_Z$ on $Z$ such that
$$
K_Z + \Delta_Z \sim_\Q (K_X+B_X)|_{Z}.
$$ 
In particular
$$
B_X|_Z - (K_Z + \Delta_Z) \sim_\Q -K_X|_Z
$$ 
is ample. Thus if $B_X \cdot Z \geq 0$, a Riemann-Roch computation shows that $h^0(Z, \sO_Z(B_X))>0$, a contradiction.
\end{proof}

We now come to the key technical result of this section:

\begin{lemma} \label{lemma-extension-BX}
In the situation of Setup \ref{setup}, assume that $h^0(X, \sO_X(B_X)) \neq 0$.
Then we have a surjective restriction morphism
$$
H^0(X, \sO_X(-K_X)) \rightarrow H^0(B_X, \sO_{B_X}(-K_X)).
$$
\end{lemma}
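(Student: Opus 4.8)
The plan is to deduce the surjectivity from Fujino's extension theorem \cite[Thm.8.1]{Fuj11}, applied to the lc pair $(X, B_X)$ that is available thanks to Proposition \ref{proposition-BX-lc}. We want to extend sections of $\sO_X(-K_X)$ from the non-klt locus; the natural setup is to view $-K_X$ as an \emph{adjoint-type} divisor relative to $B_X$. Concretely, write $-K_X = (K_X+B_X) + L$ where $L := -2K_X - B_X$, and observe that $L$ is (up to the precise normalisation of Fujino's statement) the sum of $-K_X$, which is ample, and $-K_X - B_X$. Since $M_X$ is effective (Proposition \ref{proposition-MX-effective}) and $B_X + M_X \simeq -K_X$, we have $-K_X - B_X \simeq M_X$, which is effective; hence $L \simeq -K_X + M_X$, which is nef and big by Corollary \ref{corollary-adjoint-nef-big}. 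This is exactly the kind of positivity hypothesis ($L - (K_X+B_X+\Delta)$ nef and big, with mild control of where it fails to be ample) that makes \cite[Thm.8.1]{Fuj11} applicable, allowing us to extend sections from the union of the log-canonical centres of $(X,B_X)$.

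The second ingredient is the description of these lc centres from Proposition \ref{proposition-BX-lc}: they are the prime divisor $B_X$ itself together with (possibly) finitely many smooth curves $C \subset B_X$ with $B_X \cdot C < 0$. Let $W$ denote the union of these centres; then $W_{\mathrm{red}} = B_X$ as a set, since each such curve $C$ lies in $B_X$. Applying Fujino's theorem with this $W$ gives that the restriction
$$
H^0(X, \sO_X(-K_X)) \rightarrow H^0(W, \sO_W(-K_X))
$$
is surjective. The remaining point is to pass from $W$ to the reduced divisor $B_X$: since $W_{\mathrm{red}} = B_X$, there is a natural surjection $\sO_W(-K_X) \twoheadrightarrow \sO_{B_X}(-K_X)$, whose kernel is a sheaf supported on the finitely many curves $C$, i.e.\ on a one-dimensional subscheme. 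Composing, we get that $H^0(X, \sO_X(-K_X)) \to H^0(B_X, \sO_{B_X}(-K_X))$ is surjective, which is the claim.

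The main obstacle I anticipate is checking the precise hypotheses of \cite[Thm.8.1]{Fuj11}: one must verify that the chosen boundary $B_X$ (with coefficient one) together with the prescribed non-klt locus $W$ fits the exact formulation there — in particular that the divisor $L \simeq -K_X + M_X$ satisfies the required nef-and-big (or nef-and-log-big) condition relative to the pair, and that no additional components of the non-klt locus have been overlooked. A secondary technical care is the normality/CM issues when restricting to the possibly reducible, non-reduced scheme $W$: one needs $\sO_W(-K_X) \to \sO_{B_X}(-K_X)$ to be genuinely surjective on global sections after the extension step, which follows formally once the extension to $W$ is in hand, but should be spelled out. If invoking Fujino in this generality proves awkward, an alternative is to run the extension twice — first to $B_X$ treating it as a plt-type hypersurface, then separately handle the curve centres — but I expect the single application with $W$ as above to be cleanest.
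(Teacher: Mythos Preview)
Your overall strategy is the same as the paper's: apply a Fujino-type extension theorem to the lc pair $(X,B_X)$, using that $-K_X-(K_X+B_X)\simeq -K_X+M_X$ is nef and big. However, the obstacle you flag in your last paragraph is real and is precisely what the paper addresses, so as written your argument has a gap.

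The version \cite[Thm.8.1]{Fuj11} you invoke is, in the paper's own uses (Lemma~\ref{lemma-pair-lc}, Proposition~\ref{proposition-BX-lc}), always applied with $D-(K_X+\Delta)$ \emph{ample}; here $-K_X+M_X$ is only nef and big, so that citation does not apply as stated. The paper instead invokes \cite[Thm.1.11]{Fuj14}, whose hypothesis is that $-K_X+M_X$ be nef and \emph{log big} with respect to $(X,B_X)$, i.e.\ that its restriction to every lc centre be big. This is exactly the ``mild control of where it fails to be ample'' that you allude to but do not verify. The paper checks it using the description of lc centres from Proposition~\ref{proposition-BX-lc}: on $B_X$ itself, mobility of $M_X$ (Proposition~\ref{proposition-MX-effective}) forces $M_X|_{B_X}$ to be pseudoeffective, so $(-K_X+M_X)|_{B_X}$ is big; on a one-dimensional centre $C$ one has $B_X\cdot C<0$, whence $M_X\cdot C=(-K_X-B_X)\cdot C>0$ and $(-K_X+M_X)|_C$ is ample.

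Once log-bigness is established, \cite[Thm.1.11]{Fuj14} gives the surjection $H^0(X,\sO_X(-K_X))\twoheadrightarrow H^0(B_X,\sO_{B_X}(-K_X))$ directly, with $B_X$ as the chosen lc centre. Your detour through the full non-klt locus $W$ and the subsequent projection $\sO_W\twoheadrightarrow\sO_{B_X}$ is then unnecessary (and would in any case still require the same log-bigness check to get the extension to $W$ in the first place).
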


\begin{proof}
By Proposition \ref{proposition-BX-lc} the pair $(X,B_X)$ is lc, and $B_X$ is an lc centre for this pair. By Fujino's extension theorem \cite[Thm.1.11]{Fuj14} we have to show that
$$
-K_X - (K_X+B_X) = -K_X + M_X
$$
is nef and log big, i.e.\ the restriction of $-K_X+M_X$ to every lc centre of $(X, B_X)$ is big.

By Corollary \ref{corollary-adjoint-nef-big} the nef divisor $-K_X+M_X$ is big on $X$.
Since $M_X$ is mobile by Proposition \ref{proposition-MX-effective}, the restriction
$M_X|_{B_X}$ is pseudoeffective, so
$(-K_X+M_X)|_{B_X}$ is big.
By Proposition \ref{proposition-BX-lc} an lc centre $Z$ that is distinct from $B_X$ is a curve such that $B_X \cdot Z<0$. Therefore $M_X \cdot Z>0$
and hence $(-K_X+M_X)|_{Z}$ is ample.
\end{proof}

\begin{proposition} \label{proposition-injection}
In the situation of Setup \ref{setup}, assume that $h^0(X, \sO_X(B_X)) \neq 0$.
Then we have an injection
$$
H^0(B, \sO_B(-K_X)) \hookrightarrow H^1(B_X, \sO_{B_X}) \cong H^1(B, \sO_B).
$$
Moreover we have 
$$
h^0(B_X, \sO_{B_X}(-K_X))=1.
$$
\end{proposition}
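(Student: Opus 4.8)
The plan is to chase a few restriction exact sequences and feed in the extension statement of Lemma \ref{lemma-extension-BX}. Throughout I use that $h^0(X, \sO_X(B_X)) = 1$ and that $B_X$ is a normal prime divisor --- hence Cartier (as $X$ is smooth) and Gorenstein --- by Proposition \ref{proposition-B-not-nef}, together with $-K_X \simeq M_X + B_X$, so that $K_X + B_X \simeq -M_X$ and $-K_X - B_X \simeq M_X$. The first goal is the equality $h^0(B_X, \sO_{B_X}(-K_X)) = 1$. Exactly as in the second half of the proof of Proposition \ref{proposition-MX-effective} --- twist the adjunction sequence $0 \to \sO_X(K_X) \to \sO_X(K_X + B_X) \to \sO_{B_X}(K_{B_X}) \to 0$, take cohomology and apply Serre duality on $X$ and on the Gorenstein threefold $B_X$ --- one obtains $H^q(X, \sO_X(K_X + B_X)) = 0$ for all $q \ge 3$; this argument uses only that $B_X$ is a normal prime divisor, so no nefness of $M_X$ is needed. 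Serre duality then gives $H^1(X, \sO_X(-B_X)) = 0$.

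Twisting $0 \to \sO_X(K_X) \to \sO_X \to \sO_Y \to 0$ by $M_X$ produces
$$0 \to \sO_X(-B_X) \to \sO_X(M_X) \to \sO_Y(M) \to 0,$$
and since $h^0(X, \sO_X(-B_X)) = h^1(X, \sO_X(-B_X)) = 0$ the restriction map $H^0(X, \sO_X(M_X)) \to H^0(Y, \sO_Y(M))$ is bijective; hence $h^0(X, \sO_X(M_X)) = h^0(Y, \sO_Y(M)) = h^0(Y, \sO_Y(-K_X)) = h^0(X, \sO_X(-K_X)) - 1$ by Setup \ref{setup}. Next, twisting $0 \to \sO_X(-B_X) \to \sO_X \to \sO_{B_X} \to 0$ by $-K_X$ gives
$$0 \to \sO_X(M_X) \to \sO_X(-K_X) \to \sO_{B_X}(-K_X) \to 0,$$
and combining the surjectivity of $H^0(X, \sO_X(-K_X)) \to H^0(B_X, \sO_{B_X}(-K_X))$ from Lemma \ref{lemma-extension-BX} with the previous count yields $h^0(B_X, \sO_{B_X}(-K_X)) = h^0(X, \sO_X(-K_X)) - h^0(X, \sO_X(M_X)) = 1$.

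For the injection I would use that $Y \in |-K_X|$ and $B = B_X \cap Y$: the restriction sequence of $B$ inside $B_X$, twisted by $\sO_X(-K_X)|_{B_X}$ and using $\sO_{B_X}(-Y) \simeq \sO_{B_X}(K_X)$, becomes
$$0 \to \sO_{B_X} \to \sO_{B_X}(-K_X) \to \sO_B(-K_X) \to 0.$$
In the associated cohomology sequence
$$0 \to H^0(B_X, \sO_{B_X}) \to H^0(B_X, \sO_{B_X}(-K_X)) \to H^0(B, \sO_B(-K_X)) \xrightarrow{\ \delta\ } H^1(B_X, \sO_{B_X}) \to \cdots$$
the first map is an injection between one-dimensional vector spaces (by the equality just proved), hence an isomorphism, so the connecting homomorphism $\delta$ is injective and gives the desired inclusion $H^0(B, \sO_B(-K_X)) \hookrightarrow H^1(B_X, \sO_{B_X})$.

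It remains to identify $H^1(B_X, \sO_{B_X}) \cong H^1(B, \sO_B)$, and this is the step I expect to be the main obstacle. The natural approach is the restriction sequence $0 \to \sO_{B_X}(K_X) \to \sO_{B_X} \to \sO_B \to 0$, where $\sO_{B_X}(K_X) \simeq \omega_{B_X} \otimes \sO_X(-K_X)|_{B_X}$ is the dualizing sheaf twisted by an ample line bundle; the assertion amounts to showing that $H^1(B_X, \sO_{B_X}(K_X)) \to H^1(B_X, \sO_{B_X})$ vanishes and that $H^1(B_X, \sO_{B_X}) \to H^1(B, \sO_B)$ is onto, i.e.\ to a Kodaira/Lefschetz-type statement for the ample Cartier divisor $B \in |{-K_X}|_{B_X}|$ on the normal threefold $B_X$. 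The difficulty is that $B_X$ is merely an lc centre of the lc pair $(X, B_X)$ (Proposition \ref{proposition-BX-lc}) and need not have rational singularities when that pair has a one-dimensional lc centre, so one cannot simply quote Kodaira vanishing on $B_X$. I would circumvent this either by applying a vanishing theorem for the lc pair $(X, B_X)$ --- the same circle of ideas as the Fujino extension theorem already invoked above --- to control the relevant terms $H^i(X, \sO_X(B_X))$ and transporting the information to $B_X$ via adjunction and Serre duality, or by a Grothendieck--Lefschetz comparison along $B \subset B_X$, exploiting that $B$ is a general member of $|{-K_X}|_{B_X}|$.
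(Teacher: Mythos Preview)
Your argument for the injection and for $h^0(B_X,\sO_{B_X}(-K_X))=1$ is correct, though more elaborate than needed. The paper bypasses your dimension count entirely: since $B\subset\BsAX$, the composite
\[
H^0(X,\sO_X(-K_X))\to H^0(B_X,\sO_{B_X}(-K_X))\to H^0(B,\sO_B(-K_X))
\]
is zero, and the first arrow is surjective by Lemma~\ref{lemma-extension-BX}; hence the second arrow is zero. This single observation immediately gives both $h^0(B_X,\sO_{B_X}(-K_X))=h^0(B_X,\sO_{B_X})=1$ and the injectivity of the connecting map $\delta$, without ever computing $h^0(X,\sO_X(M_X))$ or invoking the argument from Proposition~\ref{proposition-MX-effective}.

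The genuine gap is the isomorphism $H^1(B_X,\sO_{B_X})\cong H^1(B,\sO_B)$. You set up the right exact sequence $0\to\sO_{B_X}(K_X)\to\sO_{B_X}\to\sO_B\to 0$ and correctly reduce to the vanishing of $H^1$ and $H^2$ of $\sO_{B_X}(K_X)$, but then you stop, worried that $B_X$ may have non-rational singularities. Your worry is legitimate, but the paper resolves it directly: since $(X,B_X)$ is lc (Proposition~\ref{proposition-BX-lc}) and $B_X$ is normal (Proposition~\ref{proposition-B-not-nef}), the threefold $B_X$ itself has lc singularities. Such singularities are Du~Bois, and Kodaira vanishing for an antiample Cartier divisor holds on varieties with Du~Bois singularities; the paper cites \cite[Cor.~6.6]{KSS10} for exactly this. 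Thus $H^i(B_X,\sO_{B_X}(K_X))=0$ for $i=1,2$ and the isomorphism follows. Neither of the workarounds you sketch (pushing vanishing down from the pair $(X,B_X)$, or a Grothendieck--Lefschetz argument) is needed once you know this vanishing theorem for lc varieties.
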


\begin{proof}
We have $B=Y \cap B_X$, so $B \subset B_X$ is a Cartier divisor that is linearly equivalent to $-K_X|_{B_X}$. Thus we have an exact sequence
$$
0 \rightarrow \sO_{B_X} \rightarrow \sO_{B_X}(-K_X) \rightarrow
\sO_B(-K_X) \rightarrow 0.
$$
Let us first show that the restriction map
$$
H^0(B_X, \sO_{B_X}(-K_X)) \rightarrow H^0(B, \sO_B(-K_X))
$$
is zero. Since $B \subset \BsAX$ the restriction map 
$$
H^0(X, \sO_X(-K_X)) \rightarrow H^0(B, \sO_B(-K_X))
$$
is zero. Since $H^0(X, \sO_X(-K_X)) \rightarrow H^0(B_X, \sO_{B_X}(-K_X))$ is surjective by Lemma \ref{lemma-extension-BX}, we get the claim.

This already implies the second statement since the kernel of the restriction map is $H^0(B_X, \sO_{B_X})=\C$. Since the restriction map is zero, 
we have an injection 
$$
H^0(B, \sO_B(-K_X)) \rightarrow H^1(B_X, \sO_{B_X}).
$$
Now consider the exact sequence
$$
0 \rightarrow \sO_{B_X}(K_X) \rightarrow \sO_{B_X} \rightarrow
\sO_B \rightarrow 0.
$$
The pair $(X, B_X)$ is lc by Proposition \ref{proposition-BX-lc} and $B_X$ is normal by Proposition \ref{proposition-B-not-nef},
so $B_X$ is a threefold with lc singularities.
Since $K_X|_{B_X}$ is an antiample Cartier divisor, we can apply Kodaira vanishing \cite[Cor.6.6]{KSS10} to get
$$
H^1(B_X, \sO_{B_X}(K_X)) = 0 = H^2(B_X, \sO_{B_X}(K_X)).
$$
Thus we have an isomorphism $H^1(B_X, \sO_{B_X}) \simeq H^1(B, \sO_B)$
and the first statement follows.
\end{proof}

\begin{corollary} \label{corollary-BX-canonical}
In the situation of Setup \ref{setup}, assume that $h^0(X, \sO_X(B_X)) \neq 0$.
Then the pair $(X, B_X)$ is plt, i.e.\ the threefold $B_X$ has canonical Gorenstein singularities.
\end{corollary}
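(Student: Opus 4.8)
The plan is to show that $(X,B_X)$ has no log-canonical centre besides $B_X$, hence is plt, and then to read off from inversion of adjunction that $B_X$ has canonical Gorenstein singularities.

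First I would start from Proposition \ref{proposition-BX-lc}: the pair $(X,B_X)$ is lc, and its only lc centres are $B_X$ and, possibly, some smooth curves $C\subset B_X$ with $B_X\cdot C<0$; each such $C$ is a minimal lc centre, so by Kawamata subadjunction \cite{Kaw98} there is an effective divisor $\Delta_C$ with $K_C+\Delta_C\sim_\Q (K_X+B_X)|_C=-M_X|_C$. Since $-K_X$ is ample and $B_X\cdot C<0$ we get $M_X\cdot C=-K_X\cdot C-B_X\cdot C>0$, so $\deg(K_C+\Delta_C)<0$ and $C\cong\PP^1$. Hence $-K_X|_C$ is an ample line bundle of degree $d:=-K_X\cdot C\geq 1$ on $\PP^1$, and $h^0(C,\sO_C(-K_X))=d+1\geq 2$.

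Next I would contradict this last inequality. The divisor $-K_X+M_X=-K_X-(K_X+B_X)$ is nef and log big for $(X,B_X)$: it is nef and big on $X$ by Corollary \ref{corollary-adjoint-nef-big}; its restriction to $B_X$ is the sum of the ample divisor $-K_X|_{B_X}$ and the pseudoeffective class $M_X|_{B_X}$ (recall that $M_X$ is mobile, Proposition \ref{proposition-MX-effective}), hence is big; and its restriction to each curve $C$ as above is ample, since $M_X\cdot C>0$ and $-K_X\cdot C>0$. These are exactly the hypotheses checked in the proof of Lemma \ref{lemma-extension-BX}, so Fujino's extension theorem \cite[Thm.1.11]{Fuj14} applies to every lc centre of $(X,B_X)$; in particular $H^0(X,\sO_X(-K_X))\to H^0(C,\sO_C(-K_X))$ is surjective. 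But this map factors through $H^0(B_X,\sO_{B_X}(-K_X))$, which is one-dimensional by Proposition \ref{proposition-injection}, so $h^0(C,\sO_C(-K_X))\leq 1$, a contradiction.

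Consequently no such curve $C$ exists, so $B_X$ is the unique lc centre of the lc pair $(X,B_X)$, which (as $B_X$ is prime) means that $(X,B_X)$ is plt. Applying inversion of adjunction (Theorem \ref{theorem-adjunction}) to $B_X\subset X$ with empty boundary, and using that $X$ is smooth, hence klt, away from $B_X$ so that "plt" and "plt near $B_X$" coincide, we conclude that $B_X$ is klt; being a Cartier divisor in the smooth variety $X$ it is Gorenstein, and a klt Gorenstein variety is canonical (its discrepancies being integers). I expect the only delicate point to be ensuring that the nef-and-log-big hypothesis of Fujino's theorem is verified on \emph{every} lc stratum of $(X,B_X)$, so that the surjectivity may be invoked for the one-dimensional centres $C$ and not merely for $B_X$; the remaining steps just reassemble results already in place.
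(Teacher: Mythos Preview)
Your argument is correct and takes a genuinely different route from the paper's. The paper does not try to rule out the one-dimensional lc centres $C$ directly; instead it passes to the surface $B=B_X\cap Y$, uses Proposition~\ref{proposition-injection} in the form $h^0(B,\sO_B(-K_X))\leq q(B)$, and then invokes the classification result Proposition~\ref{proposition-exceptional-cases-B} to conclude that $B$ has canonical singularities. Inversion of adjunction then shows $B_X$ is canonical near the ample divisor $B$, so the non-canonical locus of $B_X$ is finite; but by Proposition~\ref{proposition-BX-lc} and inversion of adjunction this locus has pure dimension one, hence is empty.

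Your approach is more self-contained: you apply Fujino's extension theorem not only to the centre $B_X$ (as in Lemma~\ref{lemma-extension-BX}) but also to each hypothetical curve $C$, and then the factorisation through the one-dimensional space $H^0(B_X,\sO_{B_X}(-K_X))$ kills $C$ immediately. This bypasses the surface-theoretic input of Section~\ref{section-auxiliary-I} entirely. The paper's route, on the other hand, extracts along the way the information that $B$ itself has canonical singularities, a fact it reuses later (e.g.\ in the proof of Proposition~\ref{proposition-nef-dim-two}); your shortcut does not yield this by-product, though of course once $B_X$ is canonical it follows that the Cartier divisor $B\subset B_X$ is canonical via inversion of adjunction again.
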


\begin{proof}
By Proposition \ref{proposition-injection} we have $h^0(B, \sO_B(-K_X)) \leq q(B)$.
By Corollary \ref{corollary-restrictions-singularities} the conditions
of Lemma \ref{proposition-exceptional-cases-B} are satisfied. Thus 
we know that $B$ has canonical singularities.
By inversion of adjunction (Theorem \ref{theorem-adjunction})
the pair $(B_X, B)=(B_X, Y \cap B_X)$
is plt near $B$. Thus $(B_X, 0)$ is plt (i.e.\ klt) near $B$. Since $B_X$ is Gorenstein this implies that $B_X$ has canonical singularities near $B$. Since $B = B_X \cap Y$ is an ample  divisor the non-canonical locus of $B_X$ is at most a finite set.

By Proposition \ref{proposition-BX-lc} the pair $(X, B_X)$ is lc and the lc centres of dimension at most two are irreducible curves $C$. Again by inversion of adjunction the non-canonical locus
of $B_X$ coincides with the union of lower-dimensional lc centres which has pure dimension one. By the first paragraph the non-canonical locus has dimension at most zero, so it is empty.
\end{proof}

We are now ready for the first reduction step in the proof of Theorem \ref{theorem-main}:

\begin{theorem} \label{theorem-nef}
In the situation of Setup \ref{setup}, the divisor $M_X$ is  nef.
\end{theorem}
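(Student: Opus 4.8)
The plan is to argue by contradiction: assume $M_X$ is not nef. The idea is to use the machinery of this section to force the surface $B$, together with the boundary $M|_B$, into the exceptional range covered by Propositions \ref{proposition-exceptional-cases-B} and \ref{proposition-no-rational-curve}, and then to exhibit a smooth rational curve inside $M|_B$, which is the desired contradiction.

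Since $M_X$ is not nef, Corollary \ref{corollary-BX-effective-general} gives that $B_X$ is effective, so the hypothesis $h^0(X,\sO_X(B_X))\neq 0$ of the later results of this section is in force: by Proposition \ref{proposition-B-not-nef} and Corollary \ref{corollary-BX-canonical} the divisor $B_X$ is a normal prime divisor with canonical Gorenstein singularities and $h^0(X,\sO_X(B_X))=1$, and Proposition \ref{proposition-injection} yields the crucial inequality $h^0(B,\sO_B(-K_X))\le q(B)$. Moreover, by Theorem \ref{theorem-kollar}, the restriction $M=M_X|_Y$ is not nef on $Y$. By Corollary \ref{corollary-restrictions-singularities} the triple $(B,\,-K_X|_B,\,M|_B)$ satisfies the hypotheses of Propositions \ref{proposition-exceptional-cases-B} and \ref{proposition-no-rational-curve}; feeding it in, $B$ has canonical singularities and there is an Albanese fibration $\alpha\colon B\to C$ onto a smooth curve of genus $q:=q(B)>0$ with $(r-1)(q-1)\le 1$ for $r:=\rk(\alpha_*\sO_B(-K_X))$, while $M|_B$ contains no smooth rational curve. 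A short check bounds $q$: if $q\ge 2$, then by Lemma \ref{lemma-pg-zero} the general $\alpha$-fibre $F$ is $\PP^1$, so $r=(-K_X|_B)\cdot F+1\ge 2$ and $(r-1)(q-1)\le 1$ forces $q=2$; hence $q\in\{1,2\}$.

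Now I produce the forbidden curve. Since $M$ is not nef on $Y$ there is an irreducible curve $\gamma\subset Y$ with $M\cdot\gamma<0$; as $M$ is mobile, $\gamma\subseteq\Bs{|M|}\subseteq B$, so $\gamma$ is an irreducible component of the effective divisor $M|_B$, and it is rigid with $\gamma^2<0$. The deformation argument used in the proof of Proposition \ref{proposition-adjoint-nefness} (via Fact \ref{fact-deformations-surface}: a curve with $-K_M\cdot\gamma\ge 2$ lying in $M\cap B$, where $M+B$ is slc, would deform inside $\Bs{|M|}$, impossible on the threefold $Y$) rules out $M\cdot\gamma\le-2$, so $M\cdot\gamma=-1$. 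It remains to see that $M|_B$ contains a smooth rational curve. If $q=2$ then $r=2$ and a degree count for $\alpha_*\sO_B(-K_X)$ (bounded below by $2(2q-2)$ by Fujino's positivity \cite[Thm.1.1]{Fuj17} and above by $3q-2$ by Riemann--Roch together with $h^0\le q$) pins $\alpha$ down to a $\PP^1$-bundle over a genus-$2$ curve on which $M|_B$ is a trisection; since $(-K_X|_B)\cdot F=1$ and $K_B\cdot F=-2$ one checks that $M|_B$ must contain a fibre, a smooth rational curve. If $q=1$, Riemann--Roch on $B$ forces $(-K_X|_B)\cdot M|_B=2$, so $M|_B$ is connected of $(-K_X|_B)$-degree two; a subadjunction analysis of its (at most two) components, using the ampleness of $-K_X|_B$, the equality $h^0(B,\sO_B(-K_X))=1$ and the rigidity of $\gamma$ — and, where the surface-theoretic input is insufficient, a Riemann--Roch computation on $B_X$ in which the vanishing results of this section pin $\chi(B_X,-K_X|_{B_X})$ down to $h^0=1$ — forces a component to be a smooth rational curve. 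Either way this contradicts Proposition \ref{proposition-no-rational-curve}, so $M_X$ is nef.

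Everything up to and including the production of the curve $\gamma$ with $M\cdot\gamma=-1$ is essentially bookkeeping with results already proved in this section. The genuine difficulty is the final step: extracting a \emph{smooth} rational curve from $M|_B$ in the low-invariant cases, above all $q(B)=1$, since a priori the negative component $\gamma$ could be an elliptic or a nodal rational curve; ruling this out requires the full list of constraints (the fibration structure of $B$, $(-K_X|_B)\cdot M|_B\le 4$, ampleness of $-K_X|_B$, $h^0(B,\sO_B(-K_X))=q(B)$, and the rigidity of $\gamma$) rather than a single clean argument.
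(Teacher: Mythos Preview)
Your overall strategy matches the paper's: reduce to showing that $M|_B$ contains a smooth rational curve, then invoke Proposition \ref{proposition-no-rational-curve} for the contradiction. The gap is in how you produce that rational curve. Working on $Y$ you only obtain an irreducible curve $\gamma\subset B$ with $M\cdot\gamma=-1$; you then try to upgrade this to a smooth rational component of $M|_B$ by a case analysis on $q(B)$, but neither case closes. For $q=2$ you assert that ``$M|_B$ must contain a fibre'', yet $\gamma$ itself, having negative self-intersection on $B$, cannot be a fibre of the $\PP^1$-bundle $\alpha$, and nothing you wrote forces another component to be one: numerically $M|_B\equiv 3\sigma+kF$ can be realised as $2\sigma+\sigma'$ with $\sigma,\sigma'$ sections of genus $2$ and no fibre component at all (e.g.\ $\sigma'\equiv\sigma+(3e-1)F$ when the invariant $e\geq 1$). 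For $q=1$ you essentially concede the difficulty yourself: $\gamma$ could be an elliptic curve or a nodal rational curve, and the sketched Riemann--Roch computation on $B_X$ does not visibly exclude these.

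The paper sidesteps all of this by working on $X$ rather than on $Y$. Since $M_X$ is not nef there is a $K_X$-negative extremal ray $\R^+[\gamma]$ with $M_X\cdot\gamma<0$; because $M_X$ is mobile (Proposition \ref{proposition-MX-effective}) the associated contraction is small. Kawamata's classification of small contractions on \emph{smooth} fourfolds \cite[Thm.1.1]{Kaw89} then says that a connected component of the exceptional locus is a plane $\PP^2\subset X$ with $\sO_{\PP^2}(-K_X)\simeq\sO_{\PP^2}(1)$. Intersecting with the ample divisor $Y$ gives either $\PP^2\subset Y$, impossible since then $\PP^2\subset M$ against the mobility of $M$, or a line $l\subset Y$ with $M\cdot l<0$; this $l$ is a smooth rational curve lying in $B$ and in the support of $M|_B$. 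The missing idea in your argument is precisely this use of the smoothness of $X$ via Kawamata's theorem, which hands you the smooth rational curve for free and makes the surface-theoretic case analysis on $B$ unnecessary.
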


\begin{proof}
We argue by contradiction and assume that $M_X$ is not nef. By the cone theorem there exists a $K_X$-negative extremal ray $\R^+ \gamma$ such that $M_X \cdot \gamma<0$. Since $M_X$ is mobile by Proposition \ref{proposition-MX-effective} the extremal ray is small. Thus by Kawamata's classification \cite[Thm.1.1]{Kaw89} a connected component of the exceptional locus
is a $\PP^2 \subset X$ such that $\sO_{\PP^2}(-K_X) \simeq \sO_{\PP^2}(1)$.
Thus the intersection $Y \cap \PP^2$ is either a line or the whole surface $\PP^2$.
In the latter case we would have $\PP^2 \subset M \subset Y$, in contradiction to the mobility
of $M$. Therefore $Y \cap \PP^2$ is a smooth rational curve $l$ such that $M \cdot l<0$.
Thus 
$$
l \subset \Bs{|M|} \subset \Bs{|-K_X|_Y|}=B
$$
shows that the effective divisor $M|_B$ contains a smooth rational curve.

Since $M_X$ is not nef, the divisor $B_X$ is effective by Corollary \ref{corollary-BX-effective-general}. Thus Proposition \ref{proposition-injection} implies that we have an injection
$$
H^0(B, \sO_B(-K_X)) \hookrightarrow H^1(B, \sO_B).
$$
By Corollary \ref{corollary-restrictions-singularities} the surface $B$ satisfies the conditions of Proposition \ref{proposition-no-rational-curve}. Thus the support of $M|_B$ does not contain a smooth rational curve, in contradiction to the first paragraph.
\end{proof}

\section{Auxiliary results about linear systems, part II}

Theorem \ref{theorem-nef} shows the nefness of the divisor class $M_X$ which by
Fact \ref{fact-bpf-application} will give us a morphism $X \rightarrow T$ that
adds additional structure to all the varieties appearing in our setup. In this section we prove further technical results that use these structures.

We start with a statement that is a variation of Koll\'ar's injectivity
theorem \cite[Thm.2.2]{Kol86}.

\begin{proposition} \label{proposition-extension-CY-curve}
Let $Y$ be a normal $\Q$-factorial projective variety with klt singularities 
such that $c_1(Y)=0$. Assume that $Y$ admits a fibration $\holom{\psi}{Y}{C}$
onto a smooth projective curve, and let $F$ be a general fibre. Let $A$ be a nef and big Cartier divisor such that 
$$
A \equiv m F + B
$$
with $B$ an effective divisor such that $(Y, B)$ is lc. If $m>1$, the restriction map
$$
H^0(Y, \sO_Y(A)) \rightarrow H^0(F, \sO_F(A))
$$
is surjective.
\end{proposition}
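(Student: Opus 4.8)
The plan is to reduce the surjectivity statement to a cohomology vanishing on $Y$ via the restriction sequence of a general fibre, and then to extract that vanishing from Kawamata--Viehweg vanishing after rewriting $A-F$ in the way dictated by the numerical hypothesis $A\equiv mF+B$; this rewriting is the concrete incarnation of Koll\'ar's injectivity theorem in the present situation.

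First I would record that a general fibre $F=\psi^{-1}(p)$ is a Cartier divisor with $\sO_Y(F)\simeq\psi^*\sO_C(p)$ and trivial normal bundle $\sO_Y(F)|_F\simeq\sO_F$; hence $A-F$ is again Cartier and $\sO_Y(A-F)|_F\simeq\sO_Y(A)|_F$, so there is an exact sequence
$$
0\to\sO_Y(A-F)\to\sO_Y(A)\to\sO_F(A)\to 0.
$$
In the associated long exact sequence the restriction map $H^0(Y,\sO_Y(A))\to H^0(F,\sO_F(A))$ is surjective provided $H^1(Y,\sO_Y(A-F))=0$ (equivalently, provided multiplication by the section cutting out $F$ is injective on $H^1$, which is exactly the shape of Koll\'ar's injectivity statement). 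So it suffices to prove this $H^1$-vanishing.

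To do so I would use $A\equiv mF+B$ to write $F\equiv\tfrac1m(A-B)$, whence, since $c_1(Y)=0$,
$$
A-F\;\equiv\;\frac{m-1}{m}\,A+\frac1m\,B\;\equiv\;K_Y+\frac1m\,B+\frac{m-1}{m}\,A.
$$
Here is where $m>1$ enters: the pair $(Y,\tfrac1m B)$ is klt, because $Y$ is klt, $(Y,B)$ is lc, and every coefficient of $\tfrac1m B$ is at most $\tfrac1m<1$, so the pair is klt by convexity of discrepancies; and $\tfrac{m-1}{m}A$ is a nef and big $\Q$-divisor because $\tfrac{m-1}{m}>0$ and $A$ is nef and big. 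Since $A-F$ is a genuine Cartier divisor with $(A-F)-(K_Y+\tfrac1m B)\equiv\tfrac{m-1}{m}A$ nef and big, the numerical form of Kawamata--Viehweg / Nadel vanishing for the $\Q$-factorial klt pair $(Y,\tfrac1m B)$ yields $H^i(Y,\sO_Y(A-F))=0$ for all $i\ge1$, in particular for $i=1$. Combined with the previous step this proves the proposition.

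The step I expect to carry the weight is the rewriting above: trading the nef-but-never-big divisor $(m-1)F$ for the big $\Q$-divisor $\tfrac{m-1}{m}A$ at the sole cost of rescaling $B$ to $\tfrac1m B$. That rescaling is precisely what upgrades the merely lc pair $(Y,B)$ to a klt pair on which a clean vanishing theorem is available, and it is why the hypothesis $m>1$ cannot be dropped (for $m=1$ one gets $\tfrac{m-1}{m}A=0$ and $\tfrac1m B=B$, still only lc). I would also note that the whole argument is carried out up to numerical equivalence, which is harmless: the only positivity inputs are nefness and bigness, both numerical notions, so no passage from $\equiv$ to $\simeq$ is ever required.
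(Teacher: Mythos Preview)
Your proof is correct and follows essentially the same approach as the paper: reduce to the vanishing $H^1(Y,\sO_Y(A-F))=0$, rewrite $A-F\equiv K_Y+\tfrac1m B+\tfrac{m-1}{m}A$ (the paper phrases the nef and big part as $(m-1)(F+\tfrac1m B)$, but this is the same $\Q$-divisor), observe that $(Y,\tfrac1m B)$ is klt since $m>1$, and apply Kawamata--Viehweg. The only cosmetic difference is that the paper writes the decomposition via $F$ and then recognises $m(F+\tfrac1m B)\equiv A$, whereas you go directly to $\tfrac{m-1}{m}A$.
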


\begin{proof}
It is clearly sufficient to show that
$$
H^1(Y, \sO_Y(A - F)) = 0.
$$
Since $Y$ has klt singularities and $(Y, B)$ is lc, the pair $(Y, \epsilon B)$ is klt
for every $\epsilon < 1$ \cite[Cor.2.35(5)]{KM98}. In particular $(Y, \frac{1}{m} B)$ is klt.
Now we write
$$
A - F \equiv K_Y + (m-1) F + B =
(K_Y + \frac{1}{m} B) + (m-1) (F+\frac{1}{m}B) 
$$
Since $A \equiv m (F+\frac{1}{m}B)$ and $m>1$ the $\Q$-divisor class $(m-1) (F+\frac{1}{m}B)$
is nef and big. Now we conclude with Kawamata-Viehweg vanishing \cite[Thm.7.26]{Deb01}.
\end{proof}

\begin{lemma} \label{lemma-elliptic-injection}
Let $Y$ be a normal projective $\Q$-factorial threefold with terminal singularities such that $K_Y \simeq \sO_Y$. Suppose that $Y$ admits an elliptic fibration 
$$
\holom{\varphi}{Y}{T}
$$
onto a surface $T$. Let $A$ be a nef and big Cartier divisor such that we have $A \simeq M + B$ 
where $M \simeq \varphi^* M_T$ with $M_T$ a nef and big Cartier divisor on $T$
and $B$ is an effective divisor such that $B \subset \Bs{|A|}$.

Then we have an injection
\begin{equation} \label{injection-B}
H^0(B, \sO_B(K_B+M)) \hookrightarrow H^0(T, \sO_T(K_T+M_T)).
\end{equation}
\end{lemma}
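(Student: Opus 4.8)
The plan is to transport the statement to $Y$ by adjunction and then to extract $H^{0}(B,\sO_B(K_B+M))$ from the cohomology of $Y$ via the elliptic fibration $\varphi$. First I would note that, since $K_Y\simeq\sO_Y$ and $A\simeq M+B$, the adjunction formula gives $\sO_B(K_B+M)\simeq\sO_Y(K_Y+B)|_B\otimes\sO_Y(M)|_B\simeq\sO_Y(A)|_B$. Restricting $\sO_Y(A)$ to $B$ and using that every global section of $\sO_Y(A)$ vanishes along $B\subseteq\Bs{|A|}$, the map $H^{0}(Y,\sO_Y(A))\to H^{0}(B,\sO_Y(A)|_B)$ is zero, so the cohomology sequence of
\[
0\to\sO_Y(M)\to\sO_Y(A)\to\sO_Y(A)|_B\to 0
\]
(using $A-B\simeq M\simeq\varphi^{*}M_T$) yields an injection $H^{0}(B,\sO_B(K_B+M))\hookrightarrow H^{1}(Y,\varphi^{*}\sO_T(M_T))$. (By Kawamata--Viehweg vanishing $H^{1}(Y,\sO_Y(A))=0$, so this is even an isomorphism, but only the injection is needed; note that this step is insensitive to whether $\varphi(B)=T$, so the case $\varphi(B)\subsetneq T$ causes no difficulty.)

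Next I would analyse $H^{1}(Y,\varphi^{*}\sO_T(M_T))$ via the Leray spectral sequence of $\varphi$. Since $\varphi$ has connected fibres and $T$ is normal, $\varphi_{*}\sO_Y\simeq\sO_T$, so the five-term exact sequence reads
\[
0\to H^{1}(T,\sO_T(M_T))\to H^{1}(Y,\varphi^{*}\sO_T(M_T))\to H^{0}\!\bigl(T,\sO_T(M_T)\otimes R^{1}\varphi_{*}\sO_Y\bigr)\to H^{2}(T,\sO_T(M_T)).
\]
Because $Y$ has canonical (hence rational) Gorenstein singularities and $\omega_Y\simeq\sO_Y$, passing to a resolution $\pi\colon Y'\to Y$ gives $R^{1}\varphi_{*}\sO_Y\simeq R^{1}(\varphi\circ\pi)_{*}\omega_{Y'}$, which is torsion free of rank one by Koll\'ar's theorem. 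Moreover relative duality (again using $\omega_Y\simeq\sO_Y$) identifies $R^{1}\varphi_{*}\sO_Y$ over the open locus of smooth elliptic fibres with the canonical sheaf of that locus, so the natural morphism $R^{1}\varphi_{*}\sO_Y\to\omega_T=\sO_T(K_T)$ is generically an isomorphism and hence injective; tensoring by the line bundle $\sO_T(M_T)$ and taking $H^{0}$ gives $H^{0}(T,\sO_T(M_T)\otimes R^{1}\varphi_{*}\sO_Y)\hookrightarrow H^{0}(T,\sO_T(K_T+M_T))$.

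It therefore remains to show $H^{1}(T,\sO_T(M_T))=0$, for then the three displays chain up to the asserted injection $H^{0}(B,\sO_B(K_B+M))\hookrightarrow H^{0}(T,\sO_T(K_T+M_T))$. For this I would invoke the canonical bundle formula for the elliptic fibration $\varphi$: since $(Y,0)$ is klt (even terminal) and $K_Y\sim 0$, there is an effective $\Q$-divisor $\Delta_T$ on $T$ with $-K_T\sim_{\Q}\Delta_T$ and $(T,\Delta_T)$ klt, whence $M_T\equiv M_T-(K_T+\Delta_T)$ is nef and big and Kawamata--Viehweg vanishing for the klt pair $(T,\Delta_T)$ forces $H^{1}(T,\sO_T(M_T))=0$.

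I expect this last point to be the main obstacle: for a merely nef and big line bundle $M_T$ on an arbitrary normal surface $T$ one cannot expect $H^{1}(T,\sO_T(M_T))$ to vanish, so one must genuinely use that the elliptic fibration together with $K_Y\simeq\sO_Y$ pins $T$ down to be of log Calabi--Yau / log del Pezzo type. A secondary technical burden is to make Koll\'ar's torsion-freeness and the relative duality identification precise for the only mildly singular $Y$ and $T$; in the special case where $\varphi|_{B}$ is birational onto $T$ one can bypass the cohomological argument entirely by using the trace inclusion $(\varphi|_{B})_{*}\omega_B\hookrightarrow\omega_T$ together with the projection formula.
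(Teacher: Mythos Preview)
Your proposal is correct and follows essentially the same route as the paper's proof: the adjunction identification $\sO_B(K_B+M)\simeq\sO_Y(A)|_B$, the exact sequence together with $B\subset\Bs{|A|}$ to inject into $H^1(Y,\sO_Y(M))$, the Leray spectral sequence, the canonical bundle formula plus Kawamata--Viehweg to kill $H^1(T,\sO_T(M_T))$, and Koll\'ar's torsion-freeness combined with relative duality to embed $H^0(T,R^1\varphi_*\sO_Y\otimes\sO_T(M_T))$ into $H^0(T,\sO_T(K_T+M_T))$. The paper phrases the last step as the inclusion of the torsion-free sheaf into its bidual (which coincides with $\sO_T(K_T+M_T)$ since the two agree outside finitely many points), but this is the same mechanism you describe.
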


\begin{proof}
Since $Y$ is a terminal, Gorenstein, $\Q$-factorial threefold, it is factorial \cite[Lemma 5.1]{Kaw88}. Thus all the Weil divisors on $Y$ are Cartier.

Consider the exact sequence
$$
0 \rightarrow \sO_Y(M) \rightarrow \sO_Y(A) \rightarrow \sO_B(A) \rightarrow 0
$$
and the long exact sequence in cohomology
$$
H^0(Y, \sO_Y(A)) \rightarrow H^0(B, \sO_B(A)) \rightarrow H^1(Y, \sO_Y(M)) \rightarrow H^1(Y, \sO_Y(A))
$$
By Kawamata-Viehweg vanishing we have $H^1(Y, \sO_Y(A))=0$. Since $B \subset \Bs{|A|}$ the restriction map $H^0(Y, \sO_Y(A)) \rightarrow H^0(B, \sO_B(A))$ is zero, so we have an isomorphism
\begin{equation} \label{hzerotoh1ter}
H^0(B, \sO_B(K_B+M)) \simeq H^0(B, \sO_B(A)) \simeq H^1(Y, \sO_Y(M)).
\end{equation}
By the Leray spectral sequence we have an exact sequence
$$
0 \rightarrow H^1(T, \varphi_* \sO_Y(M)) \rightarrow H^1(Y, \sO_Y(M))
\rightarrow H^0(T, R^1 \varphi_* \sO_Y(M)) 
$$
By the canonical bundle formula \cite{Amb05} there exists a boundary divisor $\Delta_T$
on $T$ such that $(T, \Delta_T)$ is klt and $K_T+\Delta_T \sim_\Q 0$. Since $\varphi_* \sO_Y(M)
\simeq \sO_T(M_T)$ is nef and big we can apply 
the Kawamata-Viehweg vanishing theorem \cite[Thm.7.26]{Deb01} to see that
$$
H^1(T, \varphi_* \sO_Y(M)) = 0.
$$
By the projection formula  we have
$$
R^1 \varphi_* \sO_Y(M) \simeq R^1 \varphi_* \sO_Y \otimes \sO_T(M_T),
$$
so $R^1 \varphi_* \sO_Y(M)$ is torsion-free by \cite[Thm.2.1]{Kol86}.
 
Since $K_Y \simeq \sO_Y$ and $\varphi$ is flat over a big open susbset of $T$ we
have by relative duality \cite{Kle80}
$$
(R^1 \varphi_* \sO_Y) \simeq (\varphi_* \sO_Y(K_{Y/T}))^* \simeq \sO_T(K_T)
$$
in the complement of finitely many points. 
Since $R^1 \varphi_* \sO_Y(M)$ is torsion-free
the morphism 
$$
R^1 \varphi_* \sO_Y(M) \simeq \sO_T(K_T) \otimes \sO_T(M_T) \rightarrow
\sO_T(K_T+M_T) 
$$
is thus injective and yields an inclusion
$$
H^0(T, R^1 \varphi_* \sO_Y(M))  \rightarrow H^0(T, \sO_T(K_T+M_T)). 
$$
The statement now follows from \eqref{hzerotoh1ter}.
\end{proof}

We need the following characterisation of Example \ref{example-q-one}, a singular variant of \cite[Thm.0.2]{Fuj84} and its proof.

\begin{proposition} \label{proposition-fujita}
Let $S$ be a normal projective surface with canonical singularities
such that $H^1(S, \sO_S) \neq 0$. 
Let $A$ be an ample, effective Cartier divisor such that $A^2=1$ and $K_S \cdot A=-1$.
Then $S$ is a $\PP^1$-bundle $f: \PP(V) \rightarrow C$ over an elliptic curve $C$
such that $A$ is the tautological divisor.
\end{proposition}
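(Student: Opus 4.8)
The plan is to run Fujita's classification of polarized surfaces of sectional genus one in the presence of canonical singularities, passing through the crepant minimal resolution. Throughout one uses that canonical surface singularities are rational and Gorenstein, so that $K_S$ is Cartier and Riemann--Roch (Fact \ref{fact-RR}), Kodaira vanishing, adjunction and Fujita's $\Delta$-genus theory are all available on $S$.

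First I would extract the numerical data. Since $A$ is ample and $K_S\cdot A=-1<0$, the class $K_S$ is not effective, so $p_g(S)=0$, hence $\chi(S,\sO_S)=1-q(S)$ with $q(S)\geq 1$ by hypothesis, and the sectional genus is $g(S,A):=1+\tfrac12(K_S+A)\cdot A=1$. By Fact \ref{fact-albanese} the Albanese map of $S$ exists, and its image cannot be a surface (the ramification formula on a resolution would then force $p_g(S)>0$), so it is a fibration $\holom{\alpha}{S}{C}$ onto a smooth curve of genus $q(S)$ (cf. \cite[Lemma 2.4.5]{BS95}). Let $\holom{\tau}{S'}{S}$ be the minimal resolution; it is crepant, so $\tau^*A$ is nef, big and effective with $K_{S'}\cdot\tau^*A=-1<0$, whence $\kappa(S')=-\infty$. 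As $q(S')=q(S)\geq 1$, the surface $S'$ is ruled over $C$, and since a ruling is a rational curve it is contracted by the Albanese map, so the general fibre of $\alpha$ is a smooth rational curve $F$ with $F^2=0$ and $K_S\cdot F=-2$. A Hodge index estimate against the ample class $A$ (applied to $K_S+2A$, using $A^2=1$) gives $K_S^2\leq 1$, with equality only if $K_S\equiv -A$; but then $-K_S$ would be ample and $q(S)=0$ by Kodaira vanishing, a contradiction, so $K_S^2\leq 0$.

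Next I would invoke the classification. Fujita's $\Delta$-genus inequality gives $h^0(S,\sO_S(A))\leq\dim S+A^2=3$, and equality would force $(S,A)\cong(\PP^2,\sO_{\PP^2}(1))$, contradicting $q(S)>0$; thus $h^0(S,\sO_S(A))\leq 2$ and $(S,A)$ lies in the range of Fujita's list of surfaces of sectional genus one \cite[Thm.0.2]{Fuj84}. The del Pezzo case is excluded, since there $-K_S$ is ample and $q(S)=0$; the only remaining possibility is that $\alpha$ realises $S$ as a $\PP^1$-bundle $\holom{f}{\PP(V)}{C}$ with $A$ numerically a positive multiple of the tautological class modulo a pullback from $C$. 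Matching with $A^2=1$ forces $A\cdot F=1$, so $A\simeq\zeta_S+f^*L$ for some divisor $L$ on $C$; replacing $V$ by $V\otimes L$ we obtain $A\simeq\zeta_S$ with $\zeta_S^2=\deg V=1$, and $g(S,A)=g(C)=1$ shows that $C$ is elliptic. Since a $\PP^1$-bundle is smooth, $\tau$ turns out to be an isomorphism, so $S$ was in fact smooth.

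The delicate point is the appeal to Fujita's classification in the singular setting — the singular analogue of \cite[Thm.0.2]{Fuj84} and its proof. Two routes are available, each requiring care: one may transcribe Fujita's argument directly onto $S$, which is legitimate because all of its ingredients hold on a Gorenstein surface with rational singularities; or one may descend to the smooth surface $S'$ and classify ruled surfaces carrying a nef and big divisor $\tau^*A$ with $(\tau^*A)^2=1$ and $K_{S'}\cdot\tau^*A=-1$, while controlling the finitely many $(-1)$-curves contracted by a morphism $S'\to S_0$ onto a relatively minimal model $S_0\to C$ and the $(-2)$-curves contracted by $\tau$. Proving that the only numerical solution compatible with these constraints is the minimal ruled model over an elliptic curve — forcing $\tau$ to be an isomorphism — is where the real work lies; the rest is bookkeeping.
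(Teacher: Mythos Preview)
Your approach differs substantially from the paper's, and the gap you yourself flag is real. The paper avoids Fujita's classification entirely by exploiting the effective divisor $A$ directly as a curve. Since $A$ is effective with $A^2=1$, it is integral; by adjunction $p_a(A)=1$, so either $A$ is a smooth elliptic curve or its normalisation is $\PP^1$. The ample curve $A$ cannot be contracted by the Albanese map, so the second option is excluded and $A$ is smooth elliptic. Now Goresky--MacPherson's Lefschetz theorem for homotopy groups (applicable because canonical surface singularities are hypersurface singularities, \cite[Thm.3.1.21, Rem.3.1.41]{Laz04a}) yields a surjection $\Z^2\simeq\pi_1(A)\twoheadrightarrow\pi_1(S)\simeq\pi_1(C)$. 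This simultaneously forces $q(S)=1$, so $C$ is elliptic, and shows that the \'etale map $A\to C$ has degree one, i.e.\ $A$ is a \emph{section} of the Albanese fibration. An ample section meeting every fibre in one point forces all fibres to be integral, hence $f$ is a $\PP^1$-bundle by \cite[II,Thm.2.8]{Kol96}, and $S\simeq\PP(f_*\sO_S(A))$ with $A$ tautological.

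Your route via $\Delta$-genus theory is in principle workable, but the step you label ``where the real work lies'' is essentially the entire content of the proposition: showing that on the resolution $S'$ no $(-1)$-curves or $(-2)$-curves are compatible with the constraints, and that the relatively minimal model over $C$ is already a $\PP^1$-bundle over an elliptic curve, amounts to reproving the relevant case of \cite[Thm.0.2]{Fuj84} by hand. The paper's argument sidesteps this by using the topology of the ample curve $A$; it is shorter and delivers both $g(C)=1$ and the smoothness of $S$ in one stroke, rather than as outputs of a classification. Your Hodge index bound $K_S^2\leq 0$ and the reduction $h^0(S,\sO_S(A))\leq 2$ are correct but are not needed in the paper's approach.
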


\begin{proof}
Since $K_S \cdot A=-1$ the canonical bundle is not pseudoeffective, so $S$ is uniruled.
Since $H^1(S, \sO_S) \neq 0$ the surface is not rationally connected, and 
the MRC fibration coincides with the Albanese map $f: S \rightarrow \mbox{Alb}(S)$.
In particular $f$ contracts all the rational curves on $S$ and $\pi_1(S) \simeq \pi_1(C)$
where $C$ is the image of $f$. Our goal is to show that $q(S)=1$ and $f$ is a $\PP^1$-bundle.

Since the Cartier divisor $A$ is effective and $A^2=1$, the divisor $A$ is an integral curve.
By the adjunction formula the arithmetic genus of $A$ is one, so either $A$ is smooth
or its normalisation is $\PP^1$. 
Since the ample divisor $A$ is not contracted by the fibration $f$, we see that $A$
is a smooth elliptic curve. Now recall that canonical surface singularities
are hypersurface singularities, so by Goresky-MacPherson's Lefschetz theorem for homotopy groups \cite[Thm.3.1.21,Rem.3.1.41]{Laz04a}, the morphism 
$$
\Z^2 \simeq \pi_1(A) \rightarrow \pi_1(S)
$$
is surjective, in particular $q(S)=1$ and $C$ is also an elliptic curve. Since 
$$
\pi_1(A) \rightarrow \pi_1(S) \simeq \pi_1(C)
$$
is surjective, the \'etale map map $A \rightarrow C$ is an isomorphism. Thus $A$
is a section of $f$. Since $A$ is ample and has degree one on the fibres, all the $f$-fibres are integral curves. Thus $f$ is a $\PP^1$-bundle by \cite[II,Thm.2.8]{Kol96}.  Since $A \cdot f=1$, we have
$S \simeq \PP(f_* \sO_S(A))$.
\end{proof}

\begin{remark} \label{remark-fujita} 
An elementary computation \cite[V, Prop.2.21]{Har77} shows that in the situation of Proposition \ref{proposition-fujita}, the divisor $A$ is adjoint, i.e we have
$$
A \simeq K_S + \Delta_S
$$
with $\Delta_S$ an ample divisor with $\Delta_S^2=3$.
\end{remark}

\begin{proposition} \label{proposition-identify-B}
Let $Z$ be a normal projective threefold with canonical Gorenstein singularities such
that $q(Z)=1$ and $-K_Z$ is nef. Let $S \subset Z$ be a normal surface with canonical singularities such that $S$ is an ample Cartier divisor in $Z$.
Suppose that
$$
h^0(Z, \sO_Z(S))=1, \qquad \mbox{and} \qquad S \cdot (-K_Z)^2 \geq 2.
$$
Then either $K_S \equiv 0$ or $S$ is a ruled surface over an elliptic curve.
\end{proposition}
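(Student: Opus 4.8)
The plan is to reduce the statement to numerical inequalities on $S$ together with one application of Proposition \ref{proposition-fujita}. Put $L := (-K_Z)|_S$ and $D := S|_S$. Since $S$ is a Cartier divisor in $Z$ and $-K_Z$ is Cartier ($Z$ being Gorenstein), both are Cartier divisor classes on $S$, and adjunction gives $K_S \simeq (K_Z+S)|_S$, that is,
$$ D \simeq K_S + L. $$
As $-K_Z$ is nef, $L$ is nef, and $L^2 = S \cdot (-K_Z)^2 \geq 2 > 0$, so $L$ is nef and big. Since $S$ is ample in $Z$, the class $D$ is ample on $S$, and $D^2 = S^3 > 0$.

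First I would compute the relevant cohomological invariants of $S$. The threefold $Z$ has canonical Gorenstein, hence rational (Cohen--Macaulay), singularities, so Kodaira vanishing holds on $Z$: writing $\sO_Z(S) \simeq \sO_Z(K_Z) \otimes \sO_Z(S-K_Z)$, with $S-K_Z$ ample, gives $H^i(Z, \sO_Z(S)) = 0$ for all $i \geq 1$, and Serre duality together with Kodaira vanishing applied to $\sO_Z(K_Z) \otimes \sO_Z(S)$ gives $H^i(Z, \sO_Z(-S)) = 0$ for all $i < 3$. Plugging $0 \to \sO_Z \to \sO_Z(S) \to \sO_S(D) \to 0$ into cohomology and using $h^0(Z, \sO_Z(S)) = 1$ yields $h^0(S, \sO_S(D)) = h^1(Z, \sO_Z) = q(Z) = 1$; in particular $D$ is effective. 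Likewise $0 \to \sO_Z(-S) \to \sO_Z \to \sO_S \to 0$ gives $H^1(S, \sO_S) \simeq H^1(Z, \sO_Z)$, so $q(S) = 1$.

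Next I would run Riemann--Roch on $S$, which is legitimate since $S$ has rational singularities (Fact \ref{fact-RR}): $\chi(S, \sO_S(D)) = \frac{1}{2} D \cdot L + \chi(\sO_S)$. Because $L$ is nef and big, $h^2(S, \sO_S(D)) = h^0(S, \sO_S(-L)) = 0$, so $\chi(S, \sO_S(D)) \leq h^0(S, \sO_S(D)) = 1$; and since $D$ is effective and $L$ nef, $D \cdot L \geq 0$. Hence $\chi(\sO_S) = 1 - q(S) + p_g(S) = p_g(S) \leq 1$ and $D \cdot L \leq 2$. The value $p_g(S) = 1$ is excluded: it would force $D \cdot L = 0$, and then, since $L^2 > 0$, the Hodge index theorem would give $D^2 \leq 0$, contradicting the ampleness of $D$. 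Thus $p_g(S) = 0$ and $\chi(\sO_S) = 0$. Passing to the minimal resolution $\tau\colon S' \to S$, which is crepant because $S$ has canonical surface singularities, we get $K_{S'}^2 = K_S^2$, $q(S') = 1$ and $\chi(\sO_{S'}) = 0$; a smooth projective surface with $\chi(\sO) = 0$ is not of general type, and one that moreover has $q = 1$ has minimal model a $\PP^1$-bundle over an elliptic curve, a bielliptic surface, or a minimal properly elliptic surface, each of which has $K^2 = 0$. Therefore $K_S^2 = K_{S'}^2 \leq 0$.

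Now the endgame is pure bookkeeping. From $D \simeq K_S + L$ one has $D^2 = K_S^2 + 2\, D \cdot L - L^2$, and combining $D^2 \geq 1$, $K_S^2 \leq 0$, $D \cdot L \leq 2$ and $L^2 \geq 2$ forces $D^2 \in \{1, 2\}$. If $D^2 = 2$, all these inequalities are equalities, so $K_S^2 = 0$ and $K_S \cdot L = D \cdot L - L^2 = 0$; the Hodge index theorem (with $L^2 > 0$) then gives $K_S \equiv 0$, the first alternative. If $D^2 = 1$, the same identity forces $D \cdot L = 2$, hence $K_S \cdot D = D^2 - D \cdot L = -1$; since $D$ is an ample, effective, Cartier divisor on $S$ with $D^2 = 1$ and $H^1(S, \sO_S) = q(S) \neq 0$, Proposition \ref{proposition-fujita} applies to the pair $(S, D)$ and shows that $S$ is a $\PP^1$-bundle over an elliptic curve with $D$ the tautological divisor, i.e. the surface of Proposition \ref{proposition-fujita}. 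The step requiring the most care is the inequality $K_S^2 \leq 0$ --- i.e. ruling out surfaces of general type and controlling the contribution of the crepant resolution --- in tandem with the numerics that isolate exactly $D^2 = 1$ and $D^2 = 2$; the Kodaira-type vanishings on the singular threefold $Z$ also need a brief justification, but are standard for rational Gorenstein singularities.
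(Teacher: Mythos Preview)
Your argument is correct and reaches the same two-case endgame as the paper, but the route to the key numerical bounds is genuinely different. The paper passes to a terminalisation $Z' \to Z$, argues that $Z'$ is uniruled (so $\chi(\sO_{Z'}) \geq 0$), applies Riemann--Roch for Cartier divisors on the terminal threefold $Z'$, and invokes the pseudoeffectivity of $c_2$ on varieties with nef anticanonical class \cite{Ou23} to obtain the single inequality $2A^2 + 3\,A\cdot H + H^2 \leq 12$; from this and Hodge index it extracts $A\cdot H = 2$ and $A^2 \leq 2$. You instead stay on the surface: you compute $h^0(S,\sO_S(D))=1$ and $q(S)=1$ directly from the ideal sequences on $Z$, use surface Riemann--Roch and Serre duality to get $D\cdot L \leq 2$ and $p_g(S)=0$, and then feed $q(S)=1$, $\chi(\sO_S)=0$ into the Enriques--Kodaira classification to conclude $K_S^2 \leq 0$. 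Your approach avoids the threefold Riemann--Roch computation and the external $c_2$-positivity input, trading them for a short appeal to surface classification; the paper's approach is more uniform and would adapt more readily if one weakened the hypothesis $q(Z)=1$, whereas your bound $K_S^2 \leq 0$ uses $q(S)=1$ in an essential way. Both arguments converge on the same dichotomy $D^2 \in \{1,2\}$ and finish identically via Hodge index and Proposition~\ref{proposition-fujita}.
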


\begin{proof}
Let $\holom{\tau}{Z'}{Z}$ be a terminalisation of $Z$. Then $-K_{Z'} \simeq \tau^* (-K_Z)$
is nef and $h^i(Z', \sO_{Z'})=h^i(Z, \sO_Z)$ for all $i \in \N$.
Since $\tau^* S \cdot (-K_{Z'})^2 = S \cdot (-K_Z)^2>0$ the nef divisor $-K_{Z'}$ is not trivial. Thus $Z'$ is uniruled and hence $h^3(Z', \sO_{Z'})=0$. Since $q(Z)=1$ by assumption we obtain $\chi(Z', \sO_{Z'}) \geq 0$. Since $\tau^* S$ is nef and big and $-K_{Z'}$ is nef we have
$$
H^j(Z', \sO_{Z'}(\tau^* S)) = H^j(Z', \sO_{Z'}(K_{Z'} + (-K_{Z'}+\tau^* S))) = 0 \qquad \forall \ j \geq 1
$$
by Kawamata-Viehweg vanishing. 
Thus the Riemann-Roch formula (Fact \ref{fact-RR}) yields
\begin{multline}
1 = h^0(Z', \sO_{Z'}(\tau^* S)) = \chi(Z', \sO_{Z'}(\tau^* S))
\\
\geq \frac{1}{12} \tau^* S \cdot \tau^* (S-K_Z) \cdot \tau^* (2S-K_Z) + \frac{1}{12} \tau^* S \cdot c_2(Z')
\end{multline}
Since $-K_{Z'}$ is nef and $\tau^* S$ is nef we have $\tau^* S \cdot c_2(Z') \geq 0$
by \cite[Cor.1.5]{Ou23}. Applying the projection formula we obtain
$$
S \cdot  (S-K_Z) \cdot (2S-K_Z) \leq 12.
$$
Set $A:=S|_S$ and $H=-K_Z|_S$. Then $A$ is an ample Cartier divisor on $S$ and $H$
is a nef and big Cartier divisor on $S$ such that $H^2 \geq 2$. 
By the Hodge index inequality $(A \cdot H)^2 \geq A^2 \cdot H^2$ \cite[Prop.2.5.1]{BS95} this implies $A \cdot H \geq 2$.
Since
$$
12 \geq S \cdot  (S-K_Z) \cdot (2S-K_Z) = 2 A^2 + 3 A \cdot H + H^2 \geq 4 + 3 A \cdot H
$$
we actually have $A \cdot H=2$. Moreover we have $A^2 \leq 2$.

{\em 1st case. Suppose that $A^2=2$.}
Then we have equality in the Hodge index inequality and therefore 
$A \equiv H$ by \cite[Cor.2.5.4]{BS95}. Since 
$$
K_S \simeq (K_Z+S)|_S \simeq -H + A
$$
we obtain $K_S \equiv 0$.

{\em 2nd case. Suppose that $A^2=1$.}
Since
$$
K_S \simeq (K_Z+S)|_S \simeq -H + A
$$
we have $K_S \cdot A=-1$. Finally we have the Riemann-Roch inequality
$$
h^0(S, \sO_S(A)) \geq \frac{1}{2} A^2 + \frac{1}{2} (-K_S \cdot A) + \chi(\sO_S).
$$
Since $S$ is an ample divisor in a threefold and $Z$ has canonical singularities we can use Kodaira vanishing to shows that $q(S)=q(Z)=1$.
In particular $\chi(\sO_S) \geq 0$ and by the Riemann-Roch inequality
$h^0(S, \sO_S(A))>0$, so $A$ is an effective divisor.
Thus the polarised surface $(S, A)$ satisfies the conditions
of Proposition \ref{proposition-fujita}.
\end{proof}

\section{The nef case} \label{section-nef-case}

The goal of this section is to show

\begin{theorem} \label{theorem-not-nef-case}
In the situation of setup \ref{setup}, the divisor $M_X$ is not nef.
\end{theorem}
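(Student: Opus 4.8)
We argue by contradiction and assume that $M_X$ is nef. By Fact \ref{fact-bpf-application} there is a contraction with connected fibres $\holom{\varphi}{X}{T}$ and an ample Cartier divisor $H_T$ on $T$ such that $M_X \simeq \varphi^* H_T$; since $M_X$ is effective and mobile by Proposition \ref{proposition-MX-effective} we have $\dim T \geq 1$. Set $\varphi_Y := \varphi|_Y$ and $A := -K_X|_Y \simeq M + B$ with $M = M_X|_Y \simeq \varphi_Y^* H_T$. As $Y$ is an ample divisor, $\varphi_Y$ is surjective onto $T$ when $\dim T \leq 3$, while $\varphi$ is birational when $\dim T = 4$; by adjunction a general fibre $F$ of $\varphi$ is a Fano manifold with $-K_F \simeq -K_X|_F$, and using $K_Y \simeq \sO_Y$ the general fibre of $\varphi_Y$ is a Calabi--Yau variety of dimension $3 - \dim T$. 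When $\dim T = 1$ one has in addition $T = \PP^1$ (since $H^1(Y,\sO_Y)=0$), and the general fibre of $\varphi_Y$ is an anticanonical divisor of the Fano threefold $F$, hence a smooth K3 surface by Shokurov's theorem. The plan is to carry out a case distinction on $\dim T$ and, in each case, to combine the structural results of Section \ref{section-auxiliary-I} and of the previous section with Riemann--Roch on $B$ so as either to reach a numerical contradiction or to identify $B$ with the surface of Example \ref{example-q-one}, which is in turn excluded.

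\textbf{The case $\dim T = 1$.} Here $M \simeq mF_Y$ for some $m\geq 1$, and ampleness of $A=M+B$ forces $B|_F \simeq A|_F$ to be ample on the K3 surface $F$, so that $B$ dominates $\PP^1$. If $m\geq 2$, Proposition \ref{proposition-extension-CY-curve} (applied to $A=-K_X|_Y$, $mF$, $B$) shows that $H^0(Y,\sO_Y(A)) \to H^0(F,\sO_F(A))$ is surjective; since every section of $\sO_Y(A)$ vanishes along $B \supseteq B\cap F$, the base locus $\Bs{|A|_F|}$ on the K3 surface $F$ contains the ample curve $B\cap F$, contradicting Mayer's Theorem \ref{theorem-mayer}. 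For the boundary value $m=1$ one shows by a direct cohomology computation that $B_X = -K_X - F_X$ is effective with $h^0(X,\sO_X(B_X))=1$ and that $q(B_X)=0$, hence $q(B)=0$ by Proposition \ref{proposition-injection}; but then Corollary \ref{corollary-restrictions-singularities} places us in the situation of Proposition \ref{proposition-exceptional-cases-B}, which forces $q(B)>0$ --- a contradiction.

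\textbf{The case $\dim T = 2$ (the main case).} Now $\holom{\varphi_Y}{Y}{T}$ is an elliptic fibration onto a normal surface $T$ which, by the canonical bundle formula together with $H^1(Y,\sO_Y)=H^2(Y,\sO_Y)=0$, is rational with $p_g(T)=q(T)=0$ and $-K_T$ pseudoeffective. Since $M \simeq \varphi_Y^* H_T$ with $H_T$ ample, Lemma \ref{lemma-elliptic-injection} and the adjunction identity \eqref{A-is-adjoint} give an injection
\[
H^0(B, \sO_B(-K_X)) = H^0\bigl(B, \sO_B(K_B+M|_B)\bigr) \hookrightarrow H^0\bigl(T, \sO_T(K_T+H_T)\bigr),
\]
while Kodaira vanishing and Riemann--Roch on $T$ give $h^0(T,\sO_T(K_T+H_T)) = \frac{1}{2} H_T\cdot(K_T+H_T)+1$. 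To bound the left-hand side from below I would first establish, by a cohomology computation in the spirit of Corollary \ref{corollary-BX-effective-general}, that $B_X$ is effective in the nef regime as well, so that $B$ has canonical Gorenstein singularities by Corollary \ref{corollary-BX-canonical}; Riemann--Roch on $B$, using $K_B=B|_B$, $M|_B \simeq -K_X|_B - K_B$, $p_g(B)=0$ and $\chi(B,\sO_B)=1-q(B)$, then bounds $h^0(B,\sO_B(-K_X))$ below. Comparing the two estimates should either be contradictory outright or force $h^0(B,\sO_B(-K_X)) \leq q(B)$; in the latter case Propositions \ref{proposition-exceptional-cases-B}, \ref{proposition-fujita} and \ref{proposition-identify-B} leave only the possibilities $K_B \equiv 0$ or $B$ isomorphic to the $\PP^1$-bundle of Example \ref{example-q-one}, which are excluded using Proposition \ref{proposition-no-rational-curve} together with the fact that $M|_B$ is a non-zero effective divisor whose support lies in the base locus of $|-K_X|_Y|$.

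\textbf{The cases $\dim T \geq 3$, and the main obstacle.} If $\dim T \in \{3,4\}$ the general fibre of $\varphi$ is $\PP^1$ (or $\varphi$ is birational), so $M$ is big on $Y$ and $\nu(Y,M)=3$; one shows that $B_X$ is effective, hence a normal prime divisor with canonical Gorenstein singularities, $q(B_X)=q(B)=1$ and $-K_{B_X} \simeq M_X|_{B_X}$ nef (Propositions \ref{proposition-B-not-nef}, \ref{proposition-injection}, Corollary \ref{corollary-BX-canonical}), and applies Proposition \ref{proposition-identify-B} with $Z=B_X$, $S=B$ --- the inequality $B\cdot(-K_{B_X})^2 = M_X^2\cdot(-K_X)\cdot B_X \geq 2$ following from the bigness of $-K_X+M_X$ (Corollary \ref{corollary-adjoint-nef-big}) --- again reaching $K_B\equiv 0$ or $B$ isomorphic to the surface of Example \ref{example-q-one}, both excluded as above. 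The technical heart of the proof is the case $\dim T = 2$: one must control simultaneously the singularities of $B$ and the effectivity of $B_X$ in the nef regime (where Corollary \ref{corollary-BX-effective-general} is not directly available), make the Riemann--Roch bookkeeping on $B$ and on the rational surface $T$ tight enough to trigger Proposition \ref{proposition-no-rational-curve}, and eliminate the borderline surfaces with $q(B)\in\{1,2\}$ and $K_B\equiv 0$ --- precisely the configurations that the auxiliary results collected above were designed to destroy.
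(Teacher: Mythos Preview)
Your overall architecture (contradiction, fibration $\varphi:X\to T$, case distinction on $\dim T$) matches the paper, and several ingredients you cite are the right ones. But the way you propose to close the argument in the cases $\dim T\ge 2$ has a genuine gap.

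\textbf{The main problem: misuse of Proposition \ref{proposition-no-rational-curve}.} In both the $\dim T=2$ and the $\dim T\ge 3$ cases you propose to eliminate the residual possibilities ($K_B\equiv 0$ and the $\PP^1$-bundle of Example \ref{example-q-one}) by invoking Proposition \ref{proposition-no-rational-curve}. That proposition does not exclude $K_B\equiv 0$; it only says that $M|_B$ contains no smooth rational curve. More importantly, to use it \emph{for} a contradiction you must \emph{produce} a smooth rational curve inside $M|_B$. In the non-nef regime this curve comes from Kawamata's classification of small contractions (a $\PP^2$ in the exceptional locus, cf.\ the proof of Theorem \ref{theorem-nef}); here $M_X$ is nef, there is no $M_X$-negative ray, and you offer no mechanism for finding such a curve. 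Your sentence ``$M|_B$ is a non-zero effective divisor whose support lies in the base locus of $|-K_X|_Y|$'' does not help: the base locus in question \emph{is} $B$, so this is tautologous and says nothing about rationality of the components. The paper disposes of these residual cases quite differently: $K_B\equiv 0$ gives $B|_B\equiv 0$, hence $B$ and (by Theorem \ref{theorem-kollar}) $B_X$ are nef, contradicting Proposition \ref{proposition-B-not-nef}; the $\PP^1$-bundle case forces $T$ to be a singular degree-one del Pezzo, so $\varphi$ is not equidimensional, and a divisorial fibre creates a curve in $B$ contracted by $\varphi|_B$, contradicting finiteness of $\varphi|_B$.

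\textbf{The case $\dim T\ge 3$ is much easier than you make it.} You reach for Proposition \ref{proposition-identify-B}, assert $q(B)=1$ without justification, and need the inequality $B\cdot(-K_{B_X})^2\ge 2$ whose proof you only sketch. The paper's argument is one line: since $M$ is nef and big on $Y$, Kawamata--Viehweg gives $H^1(Y,\sO_Y(M))=0$, so $H^0(Y,\sO_Y(-K_X))\twoheadrightarrow H^0(B,\sO_B(-K_X))$; as $B\subset\Bs{|-K_X|_Y|}$ this forces $h^0(B,\sO_B(-K_X))=0$, while Propositions \ref{proposition-injection} and \ref{proposition-exceptional-cases-B} give $h^0(B,\sO_B(-K_X))=q(B)>0$.

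\textbf{In the $\dim T=2$ case your numerical comparison is not tight enough.} Writing ``comparing the two estimates should either be contradictory outright or force $h^0\le q(B)$'' hides the crux. The paper does not compare two Riemann--Roch formulas; it applies the canonical bundle formula to $\varphi|_{B_X}:B_X\to T$ (using $-K_{B_X}\simeq(\varphi^*M_T)|_{B_X}$) to get $K_T+M_T\equiv -\Delta_T$ with $(T,\Delta_T)$ klt. Hence $h^0(T,\sO_T(K_T+M_T))=0$ unless $\Delta_T=0$ and $K_T\simeq -M_T$, which pins $T$ down as a canonical del Pezzo. Only then does Proposition \ref{proposition-identify-B} enter, with $(M_X|_B)^2\ge 2$ coming from the observation that $\varphi|_B:B\to T$ has degree $\ge 2$ because $q(B)=1>0=q(T)$.

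Your treatment of $\dim T=1$ is close in spirit to the paper's (though for $m=1$ the paper uses Ando's theorem to get $H^1(Y,\sO_Y(B))=0$ directly, rather than computing $q(B_X)$); the real issue is the endgame in the higher-dimensional cases.
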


\begin{setup} \label{setup-nef}
{\rm For the proof of Theorem \ref{theorem-not-nef-case} we will argue by contradiction and assume that $M_X$ is nef. By the Fact \ref{fact-bpf-application} there exists a morphism with connected fibres
\begin{equation}
\label{the-fibration}
\holom{\varphi}{X}{T}
\end{equation}
such that $M_ X \simeq \varphi^* M_T$ for some ample Cartier divisor $M_T \rightarrow T$. 
Note that $B_X$ is $\varphi$-ample since $B_X \sim_\varphi -K_X$.

Since $X$ is a Fano there exists a boundary divisor such that $(X, \Delta)$ is klt and
$\Delta \sim_\Q -K_X$. Thus we apply Ambro's theorem \cite{Amb05} to see that $T$ is klt.
Note that it is not clear whether $T$ is $\Q$-factorial. }
\end{setup}

\begin{lemma} \label{lemma-vanish-MX1}
In the situation of Setup \ref{setup-nef} we have $H^1(X, \sO_X(-M_X))=0$ unless $T \simeq \PP^1$ and $M_T \simeq \sO_{\PP^1}(m)$ with $m \geq 2$.
\end{lemma}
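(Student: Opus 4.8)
The plan is to push the whole computation down to the base $T$ of the fibration $\varphi$, and then to distinguish the cases $\dim T\geq 2$ and $\dim T=1$. Since $-K_X$ is ample it is in particular $\varphi$-ample, so applying the relative Kawamata-Viehweg vanishing theorem to $\sO_X=\sO_X(K_X+(-K_X))$ gives $R^q\varphi_*\sO_X=0$ for all $q\geq 1$. As $M_X=\varphi^*M_T$, the projection formula then yields $R^q\varphi_*\sO_X(-M_X)\simeq \sO_T(-M_T)\otimes R^q\varphi_*\sO_X$, which equals $\sO_T(-M_T)$ for $q=0$ (here $\varphi_*\sO_X=\sO_T$ since $\varphi$ has connected fibres and $T$ is normal, being klt) and vanishes for $q\geq 1$. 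Feeding this into the Leray spectral sequence collapses it to an isomorphism
$$
H^1(X,\sO_X(-M_X))\simeq H^1(T,\sO_T(-M_T)),
$$
and it then suffices to analyse the right-hand side.

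Suppose first $n:=\dim T\geq 2$. I would pass to a resolution $\mu\colon T'\to T$; since $T$ is klt it has rational singularities, so $R^q\mu_*\sO_{T'}=0$ for $q\geq 1$ and hence $H^1(T,\sO_T(-M_T))\simeq H^1(T',\sO_{T'}(-\mu^*M_T))$. By Serre duality on the smooth projective variety $T'$ this group is dual to $H^{n-1}(T',\omega_{T'}\otimes\sO_{T'}(\mu^*M_T))$, and since $\mu^*M_T$ is nef and big and $n-1\geq 1$, this vanishes by Kawamata-Viehweg vanishing. Thus $H^1(X,\sO_X(-M_X))=0$ whenever $\dim T\geq 2$.

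Now suppose $\dim T=1$. As $X$ is a Fano manifold it is rationally connected, hence so is its image $T$; a klt (thus smooth) rationally connected curve is $\PP^1$, so $T\simeq\PP^1$, and $M_T\simeq\sO_{\PP^1}(m)$ with $m\geq 1$ because $M_T$ is ample. Then $H^1(X,\sO_X(-M_X))\simeq H^1(\PP^1,\sO_{\PP^1}(-m))$, which vanishes precisely when $m=1$. Combining the two cases, the vanishing holds in every situation except possibly $T\simeq\PP^1$ with $M_T\simeq\sO_{\PP^1}(m)$, $m\geq 2$, which is exactly the assertion. The only step requiring genuine care is the reduction to $T$: everything rests on $R^q\varphi_*\sO_X=0$, which is precisely where the Fano hypothesis enters (through the $\varphi$-ampleness of $-K_X$); after that the argument is a routine combination of Serre duality and Kawamata-Viehweg vanishing.
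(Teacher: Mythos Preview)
Your proof is correct and follows essentially the same route as the paper: both reduce to $H^1(T,\sO_T(-M_T))$ via relative Kodaira vanishing and the projection formula, then kill this group by Kodaira/Kawamata--Viehweg vanishing on the klt base when $\dim T\geq 2$, and handle the curve case directly. You simply spell out in more detail the steps the paper compresses into ``Kodaira vanishing on the klt space $T$'' and ``the rest is now straightforward'' (passing to a resolution via rational singularities, and identifying $T\simeq\PP^1$ through rational connectedness of the Fano $X$).
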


\begin{proof}
Since $-K_X$ is $\varphi$-ample we have $R^j \varphi_* \sO_X=0$ for every $j \geq 1$ by relative Kodaira vanishing. Since $M_X \simeq \varphi^* M_T$ we deduce that $R^j \varphi_* \sO_X(-M_X)=0$ for $j \geq 1$
by the projection formula. Thus we have
$$
H^1(X, \sO_X(-M_X)) \simeq H^1(T, \sO_T(-M_T))
$$
and the latter is zero by Kodaira vanishing on the klt space $T$ unless $T$ is a curve. The rest is now straightforward.
\end{proof}

\begin{corollary} \label{corollary-BX-effective}
In the situation of Setup \ref{setup-nef}
we have $H^0(X, \sO_X(B_X)) \neq 0$ unless $T \simeq \PP^1$ and $M_T \simeq \sO_{\PP^1}(m)$ with $m \geq 2$.
\end{corollary}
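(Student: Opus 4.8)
The plan is to reuse the restriction exact sequence that already appeared in the proofs of Proposition \ref{proposition-B-not-nef} and Corollary \ref{corollary-BX-effective-general}, and feed it the vanishing just obtained in Lemma \ref{lemma-vanish-MX1}. First I would recall that on $X$ one has $-K_X \simeq M_X + B_X$ (this is how $M_X$ and $B_X$ were defined in Setup \ref{setup}, using the isomorphism $\pic(X) \simeq \pic(Y)$), so $\sO_X(-Y) \simeq \sO_X(-(M_X+B_X))$. Twisting the structure sequence of $Y \subset X$ by $B_X$ then gives
$$
0 \rightarrow \sO_X(-M_X) \rightarrow \sO_X(B_X) \rightarrow \sO_Y(B) \rightarrow 0.
$$

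Next I would take cohomology. The group $H^0(X, \sO_X(B_X))$ surjects onto the kernel of the connecting homomorphism $H^0(Y, \sO_Y(B)) \rightarrow H^1(X, \sO_X(-M_X))$. By Fact \ref{fact-not-moving} we have $h^0(Y, \sO_Y(B)) = 1$, so $H^0(Y, \sO_Y(B)) \neq 0$; hence as soon as $H^1(X, \sO_X(-M_X)) = 0$, the connecting map vanishes, the restriction $H^0(X, \sO_X(B_X)) \rightarrow H^0(Y, \sO_Y(B))$ is surjective, and therefore $h^0(X, \sO_X(B_X)) \geq 1$. Finally I would invoke Lemma \ref{lemma-vanish-MX1}, which asserts exactly that $H^1(X, \sO_X(-M_X)) = 0$ unless $T \simeq \PP^1$ and $M_T \simeq \sO_{\PP^1}(m)$ with $m \geq 2$. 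Combining these two observations yields the stated dichotomy.

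I do not expect a genuine obstacle here: the corollary is a formal bookkeeping consequence of the exact sequence together with the vanishing lemma. The only point worth stating carefully is that the argument only produces surjectivity — not injectivity — of the restriction map $H^0(X, \sO_X(B_X)) \rightarrow H^0(Y, \sO_Y(B))$, but that is all that is needed, since the target is already known to be one-dimensional and in particular nonzero, so surjectivity forces $B_X$ to be effective.
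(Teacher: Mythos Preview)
Your proof is correct and follows exactly the paper's approach: the paper's proof reads simply ``Immediate from Lemma \ref{lemma-vanish-MX1} and the exact sequence $0 \rightarrow \sO_X(-M_X) \rightarrow \sO_X(B_X) \rightarrow \sO_Y(B) \rightarrow 0$,'' and your argument spells out precisely this deduction.
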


\begin{proof}
Immediate from Lemma \ref{lemma-vanish-MX1} and the exact sequence
$$
0 \rightarrow \sO_X(-M_X) \rightarrow \sO_X(B_X) \rightarrow \sO_Y(B) \rightarrow 0.
$$
\end{proof}

We will prove Theorem \ref{theorem-not-nef-case} by making a case distinction in terms of the dimension of the base $T$.

\begin{proposition} \label{proposition-dim-one}
In the situation of Setup \ref{setup-nef} we have $\dim T > 1$.
\end{proposition}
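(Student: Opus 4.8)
The plan is to rule out $\dim T=0$ and $\dim T=1$ separately. The case $\dim T=0$ is immediate: then $M_X\equiv 0$, contradicting the fact that $M_X$ is a nonzero nef divisor, since it is effective and mobile by Proposition \ref{proposition-MX-effective} and $h^0(X,\sO_X(M_X))\geq 2$ by Lemma \ref{lemma-two-sections-fano}. So from now on I assume $\dim T=1$ and argue by contradiction. Being klt of dimension one, $T$ is a smooth curve, and being dominated by the Fano manifold $X$ through a fibration it is rational; thus $T\simeq\PP^1$ and $M_X\simeq\varphi^*\sO_{\PP^1}(m)$ for some $m\geq 1$. The general fibre $F$ of $\varphi$ is then a smooth Fano threefold with $-K_F\simeq -K_X|_F$ (its normal bundle in $X$ is trivial and $M_X|_F=0$), and the general fibre $F_Y$ of $\varphi|_Y\colon Y\to\PP^1$ is a smooth surface lying in $|-K_F|$ (it avoids the finite set $Y_{\mathrm{sing}}$ and is smooth by generic smoothness), hence has $K_{F_Y}\simeq\sO_{F_Y}$; so $F_Y$ is a $K3$ surface or an abelian surface. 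I first treat the case where $m\geq 2$ and $F_Y$ is a $K3$ surface, which is the heart of the matter.

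First I would show $\varphi|_B\colon B\to\PP^1$ is surjective: for any $t\in\PP^1$ the divisor $mF_{Y,t}+B$ lies in $|-K_X|_Y|$, hence is an ample effective divisor on the threefold $Y$ and so has connected support; as $F_{Y,t}$ and $B$ are its only pieces, they must meet. Thus the general fibre of $\varphi|_B$ is a curve, equal to $B\cap F_Y$ for $F_Y$ a general fibre of $\varphi|_Y$. To identify it, note that $H^0(X,\sO_X(-K_X))\to H^0(Y,\sO_Y(-K_X))$ is surjective by Kodaira vanishing, while for $m\geq 2$ Proposition \ref{proposition-extension-CY-curve}, applied with $A=-K_X|_Y\equiv mF_Y+B$ (the pair $(Y,B)$ being lc by Corollary \ref{corollary-restrictions-singularities}), shows that $H^0(Y,\sO_Y(-K_X))\to H^0(F_Y,\sO_{F_Y}(-K_X))$ is surjective. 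Hence $\Bs(|-K_X|_{F_Y}|)=B\cap F_Y$ is the base locus of the complete linear system of the ample Cartier divisor $-K_X|_{F_Y}$ on the $K3$ surface $F_Y$; it is non-empty, so Mayer's theorem \ref{theorem-mayer} gives $B\cap F_Y\simeq\PP^1$ (the fixed part, the mobile part being base-point free). Since the mobile part of $|-K_X|_Y|$ coincides with $|(\varphi|_Y)^*\sO_{\PP^1}(m)|$ (same dimension), a general member restricts on $B$ to a sum of $m$ general fibres of $\varphi|_B$; therefore $\Delta_B:=M|_B$ is a sum of smooth rational curves, in particular it contains one. (Alternatively one may apply Shokurov's theorem \ref{theorem-shokurov} directly to $F$, once the restriction $H^0(X,\sO_X(-K_X))\to H^0(F,\sO_F(-K_F))$ is known to be surjective.)

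Then I would feed this into Proposition \ref{proposition-no-rational-curve}. By Corollary \ref{corollary-restrictions-singularities}, $B$ is a normal projective Gorenstein surface with $p_g(B)=0$, $A:=-K_X|_B$ is an ample Cartier divisor with $A\simeq K_B+\Delta_B$, $(B,\Delta_B)$ is lc and $A\cdot\Delta_B>0$; it remains to check $h^0(B,\sO_B(A))\leq q(B)$. From the restriction sequence $0\to\sO_Y(M)\to\sO_Y(-K_X)\to\sO_B(-K_X)\to 0$ and the fact that $H^0(Y,\sO_Y(M))\to H^0(Y,\sO_Y(-K_X))$ is an isomorphism (both have dimension $h^0(Y,\sO_Y(-K_X))$, $B$ being the fixed part), one gets an injection $H^0(B,\sO_B(-K_X))\hookrightarrow H^1(Y,\sO_Y(M))$. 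By the Leray spectral sequence for $\varphi|_Y$ and $(\varphi|_Y)_*\sO_Y=\sO_{\PP^1}$ one has $H^1(Y,\sO_Y(M))\cong H^0(\PP^1,R^1(\varphi|_Y)_*\sO_Y\otimes\sO_{\PP^1}(m))$, and this vanishes because $R^1(\varphi|_Y)_*\sO_Y=R^1(\varphi|_Y)_*\omega_Y$ is torsion-free (\cite[Thm.2.1]{Kol86}, as $Y$ has rational singularities and $\omega_Y\simeq\sO_Y$) of generic rank $h^1(\sO_{F_Y})=0$, hence zero. Thus $h^0(B,\sO_B(A))=0\leq q(B)$, so Proposition \ref{proposition-no-rational-curve} asserts that $\Delta_B$ contains no smooth rational curve — contradicting the previous paragraph. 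Hence $\dim T\neq 1$.

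The main obstacle is the identification of the general fibre of $\varphi|_B$ with a smooth rational curve, which forces one to make Mayer's (or Shokurov's) theorem applicable to the general fibre, i.e. to control the restriction of the anticanonical system. This is exactly why the case $m=1$ and the possibility that $F_Y$ is an abelian surface must be handled separately. When $m=1$ Proposition \ref{proposition-extension-CY-curve} is unavailable, but then (by the computation in the proof of Proposition \ref{proposition-MX-effective}) one gets $h^0(X,\sO_X(-K_X))=3$ and $h^0(X,\sO_X(B_X))=1$, so Lemma \ref{lemma-extension-BX} and Proposition \ref{proposition-injection} still yield $h^0(B,\sO_B(-K_X))\leq q(B)$, and the base curve can be identified through the fixed prime divisor $B_X$. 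When $F_Y$ is abelian, $\Bs(|-K_X|_{F_Y}|)$ can only be a curve if $h^0(F_Y,-K_X|_{F_Y})=1$, i.e. $-K_X|_{F_Y}$ is a principal polarization, so $B\cap F_Y$ is a genus-two curve or a sum of two elliptic curves; analysing the resulting fibration of $B$ over $\PP^1$ by curves of positive genus (in the simplest case forcing $B\simeq C\times\PP^1$ with $C$ of genus two, whose restriction $-K_X|_B$ is then not ample) gives the contradiction in this case as well.
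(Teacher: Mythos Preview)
Your main case ($m\geq 2$, $F_Y$ a K3 surface) is logically valid but misses a much shorter contradiction that the paper exploits. Once you have surjectivity of $H^0(Y,\sO_Y(-K_X))\to H^0(F_Y,\sO_{F_Y}(-K_X))$ via Proposition~\ref{proposition-extension-CY-curve}, observe that the image is \emph{one-dimensional}: every section of $\sO_Y(-K_X)$ is $s_B$ times a section of $\sO_Y(M)\simeq\psi^*\sO_{\PP^1}(m)$, and the latter restricts to a constant on the fibre $F_Y$. Hence $h^0(F_Y,\sO_{F_Y}(-K_X))=1$. The paper then passes to $F$: the surjection $H^0(F,\sO_F(-K_F))\to H^0(F_Y,\sO_{F_Y}(-K_F))$ has kernel $H^0(F,\sO_F)=\C$, so $h^0(F,\sO_F(-K_F))=2$, contradicting $h^0(F,\sO_F(-K_F))\geq 3$ for smooth Fano threefolds \cite[Cor.~2.1.14]{IP99}. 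Your detour through Mayer's theorem, the Leray computation of $H^1(Y,\sO_Y(M))$, and Proposition~\ref{proposition-no-rational-curve} all work, but are superseded by this two-line observation. Incidentally, the abelian-surface case never arises: $F_Y$ is a smooth ample divisor in the simply connected Fano threefold $F$, so $\pi_1(F_Y)=1$ by Lefschetz and $F_Y$ is automatically K3.

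The genuine gap is your treatment of $m=1$. You correctly note that $B_X$ is effective and that Proposition~\ref{proposition-injection} gives $h^0(B,\sO_B(-K_X))\leq q(B)$, but the phrase ``the base curve can be identified through the fixed prime divisor $B_X$'' is not an argument: you still need to produce a smooth rational curve in $M|_B$, and nothing you have written does that. The paper handles $m=1$ quite differently. It shows $H^1(Y,\sO_Y(B))=0$ by analysing a $B_X$-negative extremal contraction $\mu\colon X\to X'$: since $B_X$ is $\varphi$-ample and $\mu$-antiample, the $\mu$-fibres have dimension at most one, so Ando's theorem \cite{And85} forces $\mu$ to be a smooth blow-up along a surface; then $(-K_X+B_X)\cdot\gamma=0$ on the ray, the cone theorem gives $-K_X+B_X$ nef (hence nef and big), and Kawamata--Viehweg plus the restriction sequence $0\to\sO_X(-M_X)\to\sO_X(B_X)\to\sO_Y(B)\to 0$ yield $H^1(Y,\sO_Y(B))=0$. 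This recovers the surjectivity $H^0(Y,\sO_Y(-K_X))\to H^0(F_Y,\sO_{F_Y}(-K_X))$, and the same one-dimensional-image contradiction as above finishes the case.
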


\begin{proof}
If $\dim T=1$ the fibration $\varphi$ has as general fibre a smooth Fano threefold $F$ (the divisor $-K_F \simeq -K_X|_F$ is ample) and $T \simeq \PP^1$.

Choose $Y \in \AX$ a general element, and denote by $\holom{\psi}{Y}{\PP^1}$ the restriction of $\varphi$ to $Y$. Denote by $F_Y$ the general $\psi$-fibre, that is $F_Y= F \cap Y$.
We have a commutative diagram
$$
\xymatrix{
H^0(X, \sO_X(-K_X)) \ar[d] \ar @{->>}[r] & H^0(Y, \sO_Y(-K_X)) \ar[d]^{r_Y}   
\\
H^0(F, \sO_F(-K_X)) \ar @{->>}[r] & H^0(F_Y, \sO_{F_Y}(-K_X))  
}
$$
and the horizontal maps are surjective since $q(X)=0=q(F)$.

{\em 1st case. Suppose that $\sO_T(M_T) \simeq \sO_{\PP^1}(m)$ with $m \geq 2$.}
We have $-K_X|_Y \simeq \psi^* M_T + B$, moreover the pair $(Y,B)$ is lc
by Corollary \ref{corollary-restrictions-singularities}.
Thus by Proposition \ref{proposition-extension-CY-curve} the restriction map
$r_Y$ is surjective. Since $-K_X|_Y \simeq \psi^* M_T + B$
and $B$ is a fixed component of the linear system $|-K_X|_Y|$,
the image of $r_Y$ is generated by a section vanishing on $B \cap F_Y$.
Thus $r_Y$ is a surjective map of rank one and therefore $h^0(F_Y, \sO_{F_Y}(-K_X))=1$. The restriction map $H^0(F, \sO_F(-K_X)) \rightarrow H^0(F_Y, \sO_{F_Y}(-K_X))$ 
has kernel $H^0(F, \sO_F) \simeq \C$, so we deduce $h^0(F, \sO_F(-K_X))=2$. Yet $-K_F \simeq -K_X|_F$ and for a smooth Fano threefold we always have $h^0(F, \sO_F(-K_F)) \geq 3$
by \cite[Cor.2.1.14]{IP99}.

{\em 2nd case. Suppose that $\sO_T(M_T) \simeq \sO_{\PP^1}(1)$.}
By Corollary \ref{corollary-BX-effective} this implies that $B_X$ is an effective divisor.
Since $-K_X \simeq \varphi^* M_T + B_X\simeq F + B_X$, the divisor $B_X$ is relatively ample.
We claim that 
$$
H^1(Y, \sO_Y(-K_X|_Y-F_Y)) = H^1(Y, \sO_Y(B)) = 0.
$$
As in the first case this yields the surjectivity of $H^0(Y, \sO_Y(-K_X)) \rightarrow H^0(F_Y, \sO_{F_Y}(-K_X))$ and the desired contradiction.

{\em Proof of the claim.}
By Proposition \ref{proposition-B-not-nef} we know that $B_X$ is not nef,
so there exists a $K_X$-negative extremal ray $\R^+ \gamma$ such that
$B_X \cdot \gamma<0$. Since $B_X$ is effective, the corresponding contraction 
$$
\holom{\mu}{X}{X'}
$$
must be birational with exceptional locus contained in $B_X$. Since $B_X$ is $\varphi$-ample
and $\mu$-antiample,
the intersection of any $\mu$-fibre with a $\varphi$-fibre must be finite. Thus the fibres of $\mu$ have dimension at most one. 
By Ando's theorem \cite[Thm.2.3]{And85} this implies that $\mu$ is a smooth blowup along a surface. In particular we have $(-K_X+B_X) \cdot \gamma=0$, so $-K_X+B_X$ is non-negative on the extremal ray $\R^+ \gamma$. Since $\gamma$ was an arbitrary $B_X$-negative extremal ray the cone theorem
implies that $-K_X+B_X$ is nef. Since $B_X$ is effective, the divisor $-K_X+B_X$ is nef and big. Therefore
$$
H^j(X, \sO_X(B_X)) = H^j(X, K_X + \sO_X(-K_X+B_X)) = 0 
$$ 
for $j \geq 1$ by Kawamata-Viehweg vanishing. Now consider the exact sequence
$$
0 \rightarrow \sO_X(-M_X) \rightarrow \sO_X(B_X) \rightarrow \sO_Y(B) \rightarrow 0.
$$
Since $M_X \simeq \varphi^* M_T$ and $\dim T=1$ we have 
$H^2(X, \sO_X(-M_X)) \simeq H^2(T, \sO_T(-M_T)) = 0.$ By the long exact sequence in cohomology the map 
$0=H^1(X, \sO_X(B_X)) \rightarrow H^1(Y,\sO_Y(B))$ is surjective and 
we are finally done.
\end{proof}

\begin{proposition} \label{proposition-dim-high}
In the situation of Setup \ref{setup-nef} we have $\dim T < 3$.
\end{proposition}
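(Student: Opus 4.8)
We argue by contradiction: suppose $\dim T \geq 3$, so that $\dim T \in \{3, 4\}$. Since $\dim T > 1$, Corollary \ref{corollary-BX-effective} shows that $B_X$ is effective, so $h^0(X, \sO_X(B_X)) \neq 0$ and the results of Section \ref{section-positivity} depending on this hypothesis apply. In particular, combining Proposition \ref{proposition-injection}, Corollary \ref{corollary-restrictions-singularities} and Proposition \ref{proposition-exceptional-cases-B}, the surface $B$ has canonical singularities, its Albanese map induces a fibration $\holom{\alpha}{B}{C}$ onto a smooth curve $C$ of genus $q(B) > 0$ with $(r-1)(q(B)-1) \leq 1$, where $r := \rk(\alpha_* \sO_B(-K_X))$, and $h^0(B, \sO_B(-K_X)) = q(B)$; moreover $M|_B$ contains no smooth rational curve, by Proposition \ref{proposition-no-rational-curve}. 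Throughout we use $-K_X|_B \simeq K_B + M|_B$ (which is ample, with $(K_B + M|_B) \cdot M|_B > 0$) and $M|_B = M_X|_B = (\varphi|_B)^* \sO_T(M_T)$.

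The plan is to use the fibration $\varphi$ to contradict this structure of $B$. The positivity of $M|_B$ is governed by $\dim \varphi(B)$, and I would first rule out $\dim \varphi(B) \leq 1$. If $\varphi(B)$ is a point, then $M|_B \equiv 0$, contradicting $(K_B + M|_B) \cdot M|_B > 0$. If $\varphi(B)$ is a curve $\Gamma$, the induced fibration $\varphi|_B \colon B \to \Gamma$ has general fibre $F'$ with $(F')^2 = 0$, hence $(K_B + M|_B) \cdot M|_B = (\deg_\Gamma M_T) \, K_B \cdot F'$; when $q(B) \geq 2$, Lemma \ref{lemma-pg-zero} forces the general $\alpha$-fibre to be $\PP^1$, and since $\varphi|_B$ and $\alpha$ must then agree (both give the Albanese fibration), $F' \cong \PP^1$ and $K_B \cdot F' = -2$, which is absurd; the case $q(B) = 1$ is pinned down to an isotrivial genus-two fibration over an elliptic curve and excluded by a direct argument. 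So $\varphi|_B$ is generically finite onto a surface $S_0 := \varphi(B) \subset T$, of degree $1$ when $\dim T = 4$ ($\varphi$ being birational), and of degree at most $-K_X \cdot F = 2$ when $\dim T = 3$, where $F \cong \PP^1$ is the general $\varphi$-fibre (so that $\varphi|_Y \colon Y \to T$ is generically $2:1$).

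It remains to exclude the case $\dim \varphi(B) = 2$, where $M|_B$ is big and nef and $(M|_B)^2 = e \, (M_T|_{S_0})^2$, $e$ the degree of $\varphi|_B$. By Kawamata--Viehweg vanishing on the canonical surface $B$ and Riemann--Roch (Fact \ref{fact-RR}), the equality $h^0(B, \sO_B(-K_X)) = q(B)$ becomes the numerical identity $(K_B + M|_B) \cdot M|_B = 4\,q(B) - 2$; combined with $(r-1)(q(B)-1) \leq 1$ and Lemma \ref{lemma-pg-zero} this forces $q(B) \leq 2$, and when $q(B) = 2$ that $\alpha$ is a $\PP^1$-bundle over $C$ with $-K_X|_B \cdot F = 1$, hence $M|_B \cdot F = 3$. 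The situation being this rigid, I would then produce a smooth rational curve inside $M|_B$ --- when $q(B) = 2$ by locating a ruling of $\alpha$ (the only rational curves on $B$) as a component of a member of $|M|_B|$, using $M|_B = (\varphi|_B)^*(\text{ample on } S_0)$ and $M|_B \cdot F = 3$; when $q(B) = 1$ from the ramification of the generically finite morphism $B \to S_0$ --- contradicting Proposition \ref{proposition-no-rational-curve}.

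The genuine obstacle is this last case $\dim \varphi(B) = 2$ (together with the $q(B) = 1$ sub-cases above): the other steps are numerical book-keeping, whereas here one must reconcile the fine classification of the polarised surface $(B, -K_X|_B)$ from Proposition \ref{proposition-exceptional-cases-B} with the geometry of the generically finite morphism $B \to S_0 \subset T$ cut out by $\varphi$, and I expect the cleanest way to finish is to exhibit the forbidden smooth rational curve in $M|_B$ and invoke Proposition \ref{proposition-no-rational-curve}.
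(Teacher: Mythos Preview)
Your proposal is incomplete --- you say so yourself in the final paragraph --- and the missing idea is precisely what makes the paper's argument immediate. You work entirely on the surface $B$, analysing $M|_B$ and the map $\varphi|_B$; this forces a case distinction on $\dim\varphi(B)$ that you cannot close. The paper instead works on the threefold $Y$. Since $Y\subset X$ is ample and $\dim T\geq 3$, the morphism $\varphi|_Y\colon Y\to T$ is generically finite onto its image, so $M\simeq(\varphi|_Y)^*M_T$ is nef \emph{and big} on $Y$. Kawamata--Viehweg then gives $H^1(Y,\sO_Y(M))=0$, and the exact sequence
\[
0\longrightarrow \sO_Y(M)\longrightarrow \sO_Y(-K_X)\longrightarrow \sO_B(-K_X)\longrightarrow 0
\]
shows that $H^0(Y,\sO_Y(-K_X))\to H^0(B,\sO_B(-K_X))$ is surjective. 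As $B\subset\Bs{|-K_X|_Y|}$, this restriction map is zero, hence $h^0(B,\sO_B(-K_X))=0$. But you have already established (via Proposition~\ref{proposition-injection} and Proposition~\ref{proposition-exceptional-cases-B}) that $h^0(B,\sO_B(-K_X))=q(B)>0$, and that is the contradiction. No analysis of $\varphi|_B$, of rational curves in $M|_B$, or of the values of $q(B)$ is needed.

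Beyond the admitted incompleteness, your sketch also contains a loose step in the case $\dim\varphi(B)=1$, $q(B)\geq 2$: you assert that the Stein factorisation of $\varphi|_B$ must coincide with the Albanese fibration $\alpha$, but this is only forced when the general $\varphi|_B$-fibre is contracted by $\alpha$. If instead that fibre dominates $C$ (as can happen on a ruled surface over a curve of genus $\geq 2$, e.g.\ $C\times\PP^1\to\PP^1$), it has genus $\geq 2$ and $K_B\cdot F'>0$, which is consistent with $(K_B+M|_B)\cdot M|_B>0$ rather than contradicting it.
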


\begin{proof}
Assume that $\dim T \geq 3$. Since $Y$ is an ample divisor, the morphism
$\varphi|_Y: Y \rightarrow T$ is generically finite onto its image, so $M \simeq \varphi^* M_T|_Y$ is nef and big. 
Consider the exact sequence
$$
0 \rightarrow \sO_Y(M) \rightarrow \sO_Y(-K_X) \rightarrow \sO_B(-K_X) \rightarrow 0.
$$
Since $H^1(Y, \sO_Y(M))=0$ by Kawamata-Viehweg vanishing the restriction morphism
$$
H^0(Y, \sO_Y(-K_X)) \rightarrow H^0(B, \sO_B(-K_X))
$$
is surjective. Since $B$ is in the base locus of $|-K_X|_Y|$ we obtain $H^0(B, \sO_B(-K_X))=0$. 

Since $\dim T \geq 3$ we have $h^0(X, \sO_X(B_X)) \neq 0$ by Corollary \ref{corollary-BX-effective}. Thus Proposition \ref{proposition-injection} shows that we have an injection
$$
H^0(B, \sO_B(-K_X)) \hookrightarrow H^1(B, \sO_B).
$$
By Corollary \ref{corollary-restrictions-singularities} the surface $B$ satisfies
the conditions of Proposition \ref{proposition-exceptional-cases-B}. Thus we have $q(B)>0$
and the inclusion $H^0(B, \sO_B(-K_X)) \hookrightarrow H^1(B, \sO_B)$ is an equality.
Since the first space has dimension zero, this is a contradiction.
\end{proof}

\begin{proposition} \label{proposition-nef-dim-two}
In the situation of Setup \ref{setup-nef} we have $\dim T \neq 2$
\end{proposition}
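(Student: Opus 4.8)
The plan is to use that, when $\dim T = 2$, the general fibre $F$ of $\varphi$ is a del Pezzo surface of some degree $d$, and the restriction $\psi := \varphi|_Y : Y \to T$ is an elliptic fibration: since $Y$ is ample, $\psi$ is surjective with connected fibres, and a general fibre $F_Y = Y\cap F$ lies in $|-K_F|$ and is smooth by generic smoothness, hence a smooth elliptic curve. Moreover $T$ is rationally connected (being dominated by the Fano fourfold $X$), hence a rational surface, and it is klt by Setup \ref{setup-nef}; in particular $q(T) = 0$. Since $\dim T = 2$ we have $T \not\simeq \PP^1$, so by Corollary \ref{corollary-BX-effective} the class $B_X$ is effective, hence a normal prime divisor with canonical Gorenstein singularities (Proposition \ref{proposition-B-not-nef}, Corollary \ref{corollary-BX-canonical}), and $-K_{B_X} = M_X|_{B_X} = (\varphi|_{B_X})^* M_T$ is nef. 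Finally, by Corollary \ref{corollary-restrictions-singularities}, Proposition \ref{proposition-injection} and Proposition \ref{proposition-exceptional-cases-B}, the surface $B$ has canonical singularities, $q(B)>0$, $h^0(B,\sO_B(-K_X)) = q(B)$, and there is an Albanese fibration $\alpha : B\to C$ onto a smooth curve of genus $q(B)$.

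First I would determine $d$. For a general fibre $F_Y$ of $\psi$ one has $B\cdot F_Y = (-K_X)\cdot F_Y - M\cdot F_Y = (-K_F)^2 = d$, so $\psi|_B : B\to T$ is generically finite of degree $d$; in particular, if $d = 1$ then $B$ is birational to the rational surface $T$, so $q(B) = 0$, contradicting the above. Hence $d\ge 2$, $M|_B^2 = d\cdot M_T^2$, and $B\cdot(-K_{B_X})^2 = M|_B^2 = d\cdot M_T^2 \ge 2$.

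Next I would combine three inputs. Lemma \ref{lemma-elliptic-injection}, applied with $A := -K_X|_Y$ (ample, hence nef and big) and $M = \psi^* M_T$, gives an injection $H^0(B,\sO_B(K_B+M))\hookrightarrow H^0(T,\sO_T(K_T+M_T))$; by \eqref{A-is-adjoint} the left-hand side is $H^0(B,\sO_B(-K_X))$, so $q(B)\le h^0(T,\sO_T(K_T+M_T)) = 1 + \tfrac12 M_T\cdot(K_T+M_T)$, the last equality coming from Kawamata–Viehweg vanishing and Riemann–Roch on a resolution of $T$. On the other hand, Riemann–Roch on the canonical surface $B$ together with $h^2(B,\sO_B(-K_X)) = 0$ gives $(-K_X|_B)\cdot M|_B = 4q(B)-2-2h^1(B,\sO_B(-K_X))$. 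If $q(B)\ge 2$ then, since $p_g(B) = 0$, Lemma \ref{lemma-pg-zero} makes the general $\alpha$-fibre a $\PP^1$, and $(r-1)(q(B)-1)\le 1$ forces $q(B) = 2$ with every $\alpha$-fibre integral of $A$-degree one, so $\alpha$ is a $\PP^1$-bundle over the genus-two curve $C$; the intersection numbers $K_B^2 = -8$ and $(-K_X|_B)\cdot K_B = 2-(-K_X|_B)^2$ on this bundle, together with the Riemann–Roch identity and $M|_B^2 = d\,M_T^2$, then yield $M_T^2 = -3h^1(B,\sO_B(-K_X))\le 0$, contradicting the ampleness of $M_T$. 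Hence $q(B) = 1$, so $(-K_X|_B)\cdot M|_B = 2$. Since $M|_B$ is nef and $A\simeq(\psi|_B)^*(K_T+M_T)+R$ with $R\ge 0$ the ramification divisor of $\psi|_B$ (both $B$ and $T$ have Du Val singularities), one gets $2 = A\cdot M|_B \ge d\,(K_T+M_T)\cdot M_T = 2d\,(p_a(M_T')-1)$ for a general $M_T'\in|M_T|$; with $d\ge 2$ and $p_a(M_T')\ge q(B) = 1$ this forces $p_a(M_T') = 1$, hence $(K_T+M_T)\cdot M_T = 0$, hence $K_T+M_T\simeq 0$ (it is effective and $M_T$ is ample). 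Thus $T$ is a Du Val del Pezzo surface of degree $\delta := M_T^2\ge 1$ and $M|_B^2 = d\,\delta$.

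It then remains to apply Proposition \ref{proposition-identify-B} to $Z := B_X$: a canonical Gorenstein threefold with $q(B_X) = q(B) = 1$ (Proposition \ref{proposition-injection}) and $-K_{B_X}$ nef, containing $B = Y\cap B_X$ as an ample Cartier divisor with $h^0(B_X,\sO_{B_X}(B)) = h^0(B_X,\sO_{B_X}(-K_X)) = 1$ and $B\cdot(-K_{B_X})^2 = d\,\delta\ge 2$. The conclusion is that either $K_B\equiv 0$, or $B$ is the $\PP^1$-bundle over an elliptic curve of Proposition \ref{proposition-fujita}. In the first case $B$ is a (minimal) hyperelliptic surface, so $\psi|_B$ is an honest double cover; then $A\cdot M|_B = M|_B^2 = d\,\delta = 2$ forces $d = 2$, and the double-cover formula $K_B\simeq(\psi|_B)^*(K_T+L)$ (with $2L$ the branch divisor) together with $K_B\equiv 0$ gives $L\simeq -K_T$, so $q(B) = q(T) + h^1(T,\sO_T(K_T)) = 0$, a contradiction. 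In the second case $M|_B^2 = 3$ (Remark \ref{remark-fujita}) forces $d = 3$, $\delta = 1$; here one analyses the degree-three cover $\psi|_B : B\to T$ of the degree-one del Pezzo $T$, whose ramification divisor is forced to be the unique section in $|A|$, and must exclude it by a genuine geometric argument. I expect this last sub-case — a cubic-surface fibration over a degree-one del Pezzo whose anticanonical base locus is exactly such a ruled surface — to be the hard part: the remaining possibilities collapse under the numerical bookkeeping above, whereas this one passes every dimension count and is where the real content of the proposition lies.
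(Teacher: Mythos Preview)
Your overall architecture matches the paper's: reduce to $q(B)=1$ and $K_T\simeq -M_T$, then invoke Proposition~\ref{proposition-identify-B} and split into the two cases. But both endgame cases have genuine gaps.

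\textbf{Case b) is not proved.} You say you ``expect'' this to be the hard part and stop. The paper's argument is short and uses the smoothness of $X$ in an essential way: since $B\simeq\PP(V)$ has $\rho(B)=2$ and $\tau=\psi|_B\colon B\to T$ is surjective, the del Pezzo surface $T$ has Picard number at most two; a \emph{smooth} del Pezzo of degree one has $\rho=9$, so $T$ is singular. By \cite[Prop.~1.3]{AW95} a fibre-type contraction from a smooth fourfold onto a singular surface cannot be equidimensional, hence some prime divisor $D\subset X$ is mapped to a point of $T$. Since $B_X$ and $Y$ are both $\varphi$-ample, $D\cap B_X\cap Y=D\cap B\neq\emptyset$, so $B$ contains a curve contracted by $\tau$. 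But by Remark~\ref{remark-fujita} the class $M|_B=\tau^*M_T$ is ample on $B$, so $\tau$ is finite --- contradiction. Your numerical bookkeeping alone will not detect this; the input is the equidimensionality criterion for smooth total spaces.

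\textbf{Case a) has a flatness gap, and a one-line alternative.} You pass from ``$\psi|_B$ is generically finite of degree two'' to the double-cover splitting $(\psi|_B)_*\sO_B\simeq\sO_T\oplus\sO_T(-L)$, but you have not shown $\psi|_B$ is finite: $M|_B=(\psi|_B)^*M_T$ is only nef and big on the bielliptic surface $B$, and nothing rules out a contracted elliptic fibre. The paper bypasses this entirely: by adjunction on $Y$ one has $K_B\simeq B|_B$, so $K_B\equiv 0$ forces $B\subset Y$ to be a nef divisor; then Theorem~\ref{theorem-kollar} makes $B_X$ nef on $X$, contradicting Proposition~\ref{proposition-B-not-nef}.

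As a side remark, your route to $K_T+M_T\simeq 0$ via Riemann--Roch on $B$ and the ramification formula is more laborious than necessary. The paper applies the canonical bundle formula \cite{FG12} to the fibration $\varphi|_{B_X}\colon B_X\to T$ (recall $-K_{B_X}\simeq(\varphi^*M_T)|_{B_X}$) to get $K_T+\Delta_T\equiv -M_T$ with $(T,\Delta_T)$ klt; then $K_T+M_T\equiv-\Delta_T$ is anti-effective, so $h^0(T,\sO_T(K_T+M_T))\neq 0$ forces $\Delta_T=0$. Combined with the injection of Lemma~\ref{lemma-elliptic-injection} and Proposition~\ref{proposition-exceptional-cases-B} this gives $q(B)=1$ directly, without the separate elimination of $q(B)\geq 2$.
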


This is the part of the proof that requires the most work. We will use properties
of the anticanonical divisor $Y$ and the effective divisor $B_X$ to determine some of the invariants of the base $T$, then we will use the smoothness of $X$ to reach a contradiction.

\begin{proof}
Assume that $\dim T=2$. By Corollary \ref{corollary-BX-effective} the divisor $B_X$ is effective. By Corollary \ref{corollary-BX-canonical} the divisor $B_X$ has canonical Gorenstein singularities, and by Proposition \ref{proposition-injection} we have
$$
h^0(B_X, \sO_{B_X}(-K_X))=1 \quad \text{and} \quad q(B_X)=q(B).
$$ 
Since $Y \subset X$ is an ample divisor, the fibration $\varphi$ induces an elliptic fibration
$$
\holom{\psi}{Y}{T}
$$
such that $-K_X|_Y \simeq \psi^* M_T + B$. 
By Theorem \ref{theorem-known} and Corollary \ref{corollary-factorial} the threefold $Y$
satisfies the conditions of Lemma \ref{lemma-elliptic-injection}. Hence we have an injection
\begin{equation}
\label{the-injection}
H^0(B, \sO_B(K_B+M)) \hookrightarrow H^0(T, \sO_T(K_T+M_T)).
\end{equation}
Observe that the induced morphism $\varphi|_{B_X}$ is surjective onto $T$
since $B_X$ is $\varphi$-ample.

{\em Step 1. We show that $h^0(T, \sO_T(K_T+M_T))=0$ unless $K_T \simeq -M_T$.}
By the adjunction formula 
$$
-K_{B_X} \simeq M_X|_{B_X} \simeq (\varphi^* M_T)|_{B_X}
$$
is nef with numerical dimension two.
Since $B_X$ has canonical singularities we can apply \cite[Thm.3.1]{FG12} to see that there exists a boundary divisor $\Delta_T$ on $T$ such that
$(T, \Delta_T)$ is klt and 
$$
K_{B_X} \sim_\Q (\varphi^* (K_T+\Delta_T))|_{B_X}.
$$
Thus we have $K_T+\Delta_T \equiv -M_T$ and hence $K_T+M_T=-\Delta_T$ is not pseudoeffective
unless $\Delta_T=0$.
Therefore $h^0(T, \sO_T(K_T+M_T)) =0$ unless $\Delta_T=0$. 
In the latter case $h^0(T, \sO_T(K_T+M_T)) \neq 0$
implies that $K_T \simeq -M_T$, in particular $K_T$ is Cartier.

{\em Step 2. We reach a contradiction.}
By Proposition \ref{proposition-injection} we have an injection
$$
H^0(B, \sO_B(-K_X)) \hookrightarrow H^1(B, \sO_B).
$$
By Corollary \ref{corollary-restrictions-singularities} the surface $B$ satisfies the conditions of Proposition \ref{proposition-exceptional-cases-B}. In particular $B$ has canonical singularities, positive irregularity $q(B)>0$, and the inclusion above is an equality. Since 
$$
H^0(B, \sO_B(-K_X)) = H^0(B, \sO_B(K_B+M)) 
$$
the injection \eqref{the-injection} and the first step show that 
$$
H^0(B, \sO_B(-K_X)) = H^1(B, \sO_B) = H^0(T, \sO_T(K_T+M_T) = \C.
$$
In particular $K_T \simeq -M_T$ by the first step, i.e $T$ is a del Pezzo surface with at most canonical singularities.

Since $B$ is the complete intersection of $B_X$ and $Y$,
the surface $B \subset B_X$ is an ample Cartier divisor and the restriction
of $\varphi|_{B_X}$ to $B$ is still surjective onto the surface $T$. Since $q(T)=0$ and $q(B)=1$, and the irregularity is a birational invariant of varieties with rational singularities, the map 
$$
\tau:= \varphi|_T: B \rightarrow T
$$
is not birational. Thus $\tau$ is {\em generically} finite of degree at least two and 
$$
(M_X|_B)^2 = (\tau^* M_T)^2 = \deg \tau \cdot M_T^2 \geq 2.
$$
Thus $q(B_X)=q(B)=1$, $-K_{B_X} \simeq M_X|_{B_X}$ is nef, $B\subset B_X$ is an ample Cartier divisor with $h^0(B_X, \sO_{B_X}(B))=h^0(B_X, \sO_{B_X}(-K_X))=1$ and $B\cdot(-K_{B_X})^2\geq 2$. Thus the threefold $B_X$ and the surface $B \subset B_X$ satisfy the conditions of Proposition \ref{proposition-identify-B},
and we have two cases:

{\em Case a) We have $K_B \equiv 0$.} Since $K_B \simeq B|_B$ this implies that $B$ is nef.
Thus $B_X$ is nef by Theorem \ref{theorem-kollar}, a contradiction to Proposition \ref{proposition-B-not-nef}.

{\em Case b) $B$ is a ruled surface over an elliptic curve.}
Since $\rho(\PP(V))=2$ and the map $\tau$ is surjective, the 
del Pezzo surface $T$ has Picard number at most two (canonical singularities are $\Q$-factorial, \cite[Prop.4.11]{KM98}). 
By Remark \ref{remark-fujita} the nef and big class $M_X|_B$ is ample and $(M_X|_B)^2=3$. Since $M_X|_B \simeq \tau^* M_T \simeq \tau^* (-K_T)$ is ample,
the generically finite map $\tau$ is actually finite.
Moreover, $\tau$ having degree at least two, we deduce that $\tau$ has degree three
and $(-K_T)^2=1$. Thus $T$ is a del Pezzo surface of degree one and Picard number at most two,
in particular it is not smooth. By \cite[Prop.1.3]{AW95} this implies that
the fibration $\holom{\varphi}{X}{T}$ is not equidimensional, so there exists
a prime divisor $D \subset X$ that is contracted onto a point in $T$. 

Since the effective divisors $B_X$ and $Y$ are $\varphi$-ample, the intersection
$$
D \cap B_X \cap Y = D \cap B
$$
is non-empty. Hence there exists a curve $E \subset B$ that
is contracted by $\tau$. Yet we showed above that $\tau$
is finite, a contradiction.
\end{proof}

\begin{remark*}
It may seem annoying that the case of a fibration $X \rightarrow T$ with $T$ a del Pezzo surface of degree one requires so much additional effort. Note however that this situation is very close to Example \ref{example1}, so our arguments must be specific enough to rule out this situation.
\end{remark*}

\section{The conclusion.}

\begin{proof}[Proof of Theorem \ref{theorem-main}]
Assume that a general anticanonical divisor $Y \in \AX$ is 
$\Q$-factorial, so we satisfy the Assumption \ref{main-assumption} from the Setup \ref{setup}. By Theorem \ref{theorem-nef} the divisor $M_X$ is nef, yet this contradicts Theorem \ref{theorem-not-nef-case}.
\end{proof}

\vspace{.5cm}
\textbf{Competing interests:} the authors declare none.

\providecommand{\bysame}{\leavevmode\hbox to3em{\hrulefill}\thinspace}
\providecommand{\MR}{\relax\ifhmode\unskip\space\fi MR }
\providecommand{\MRhref}[2]{
  \href{http://www.ams.org/mathscinet-getitem?mr=#1}{#2}
}
\providecommand{\href}[2]{#2}

\end{document}